\documentclass[10pt, article]{amsart}
\usepackage{geometry} 
\usepackage{ae} 
\usepackage[T1]{fontenc}
\usepackage[cp1250]{inputenc}
\usepackage{amsmath}
\usepackage{amssymb, amsfonts,amscd,verbatim}
\usepackage{MnSymbol}
\usepackage{mathrsfs} 
\usepackage{xypic}

\usepackage[normalem]{ulem}
\usepackage{hyperref}
\usepackage{indentfirst}
\usepackage{latexsym}
\input xy
\xyoption{all}

\usepackage{amsmath}    

\theoremstyle{plain}
\newtheorem{Pocz}{Poczatek}[section]
\newtheorem{Proposition}[Pocz]{Proposition}

\newtheorem{Theorem}[Pocz]{Theorem}
\newtheorem{Corollary}[Pocz]{Corollary}

\newtheorem{Lemma}[Pocz]{Lemma}
\newtheorem{Observation}[Pocz]{Observation}

\newtheorem{Question}[Pocz]{Question}

\newtheorem{Example}[Pocz]{Example}
\theoremstyle{definition}
\newtheorem{Definition}[Pocz]{Definition}

\theoremstyle{remark}
\newtheorem{Remark}[Pocz]{Remark}

\DeclareMathOperator*{\st}{st}
\errorcontextlines=0
\numberwithin{equation}{section}

\author{Jerzy~Dydak}
\address{University of Tennessee, Knoxville, USA}
\email{jdydak@utk.edu}
\author{Thomas ~ Weighill}
\address{University of Tennessee, Knoxville, USA}
\email{tweighil@vols.utk.edu}

\title{Extension theorems for large scale spaces via coarse neighbourhoods}

\date{ \today
}
\keywords{large scale space; hybrid large scale normality; neighbourhood operator; Higson compactification; slowly oscillating map \\ \copyright 2018. This manuscript version is made available under the CC-BY-NC-ND 4.0 license http://creativecommons.org/licenses/by-nc-nd/4.0/. Published v. at https://doi.org/10.1007/s00009-018-1106-z}

\subjclass[2000]{Primary: 51F99,  Secondary: 54E15, 53C23}



\begin{document}

\begin{abstract}
We introduce the notion of (hybrid) large scale normal space and prove coarse geometric analogues of Urysohn's Lemma and the Tietze Extension Theorem for these spaces, where continuous maps are replaced by (continuous and) slowly oscillating maps. To do so, we first prove a general form of each of these results in the context of a set equipped with a neighbourhood operator satisfying certain axioms, from which we obtain both the classical topological results and the (hybrid) large scale results as corollaries. We prove that all metric spaces are hybrid large scale normal, and characterize those locally compact abelian groups which (as hybrid large scale spaces) are hybrid large scale normal. Finally, we look at some properties of the Higson compactifications and coronas of hybrid large scale normal spaces.
\end{abstract}

\maketitle

\section{Introduction}
Large scale geometry (also called coarse geometry) is concerned with studying the large scale behaviour of spaces. This typically means studying properties which are invariant not under homeomorphism or homotopy equivalence of spaces, but rather under so-called ``coarse equivalence'' of spaces. The main applications of this theory are found in geometric group theory (see for example \cite{Gromov93}) and index theory \cite{RoeIndex}. For an introduction to the basic notions in coarse geometry we refer the reader to \cite{NowakYu} or \cite{Roe}. All necessary definitions will be recalled in this paper.

Usually, topology does not play a role when studying the large scale properties of spaces. Indeed, it is easy to check  that every metric space is coarsely equivalent to a discrete metric space. On the other hand, the literature shows that it is useful to introduce concepts in large scale geometry which are analogous to well-known concepts in classical topology. For example, asymptotic dimension (introduced by Gromov \cite{Gromov93}) is the large scale analogue of covering dimension. Consider now the following fundamental result in general topology.

\begin{Theorem}[Tietze Extension Theorem]
Let $X$ be a normal topological space and let $A$ be a closed subset of $X$. Then any continuous function $f: A \rightarrow [0,1]$ extends to a continuous function $g: X \rightarrow [0,1]$.
\end{Theorem}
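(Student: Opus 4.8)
The plan is to derive the result from Urysohn's Lemma, the standard separation tool available in a normal space. First I would rescale the target interval: composing $f$ with the affine homeomorphism $[0,1]\to[-1,1]$ turns the problem into extending a continuous map $f:A\to[-1,1]$, and any extension into $[-1,1]$ translates back into one into $[0,1]$. The heart of the argument is a one-step approximation lemma: if $f:A\to[-r,r]$ is continuous, then there is a continuous $h:X\to[-r/3,r/3]$ with $|f(a)-h(a)|\le 2r/3$ for all $a\in A$. To build $h$, set $C=f^{-1}([-r,-r/3])$ and $D=f^{-1}([r/3,r])$; these are disjoint and closed in $A$, hence closed in $X$ because $A$ is closed. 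Urysohn's Lemma then yields a continuous $u:X\to[0,1]$ with $u=0$ on $C$ and $u=1$ on $D$, and rescaling to $h=\tfrac{r}{3}(2u-1)$ gives a map equal to $-r/3$ on $C$ and $r/3$ on $D$; a short case check on the three pieces of $[-r,r]$ confirms the bound $|f-h|\le 2r/3$.

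Next I would iterate this lemma. Starting from $r=1$, I obtain $g_1:X\to[-\tfrac13,\tfrac13]$ with $|f-g_1|\le\tfrac23$ on $A$; applying the lemma to $f-g_1$ with $r=\tfrac23$ gives $g_2$, and inductively a sequence $g_n:X\to[-\tfrac13(\tfrac23)^{n-1},\tfrac13(\tfrac23)^{n-1}]$ satisfying $|f-\sum_{k=1}^n g_k|\le(\tfrac23)^n$ on $A$. Define $g=\sum_{n=1}^\infty g_n$. Since $\sum_{n=1}^\infty\tfrac13(\tfrac23)^{n-1}=1$, the Weierstrass $M$-test shows the series converges uniformly, so $g$ is continuous on $X$ and satisfies $|g|\le 1$, i.e.\ $g:X\to[-1,1]$. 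The error bound forces $g|_A=f$, and rescaling back completes the extension into $[0,1]$.

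The routine parts are the geometric bookkeeping and the uniform-convergence argument. The one genuinely essential input is Urysohn's Lemma: everything reduces to producing separating functions for the closed sets $C$ and $D$, and the rest is packaging. In the broader context of this paper I expect the real difficulty to lie not in this classical proof but in isolating exactly which axioms on a neighbourhood operator make the one-step approximation lemma go through, so that the same iteration can be run abstractly and then specialised to both the topological and the large scale settings; here, though, normality together with the closedness of $A$ supply precisely what that lemma needs.
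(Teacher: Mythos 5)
Your proof is correct: the one-step approximation lemma (separating $f^{-1}([-r,-r/3])$ and $f^{-1}([r/3,r])$ via Urysohn, with the closedness of $A$ guaranteeing these sets are closed in $X$), the geometric iteration, and the uniform-convergence argument are all sound, and since the target is the closed interval there is no need for the extra trick required for open-interval or real-valued versions. However, this is a genuinely different route from the paper's. The paper does not prove this statement directly; it deduces it as a corollary of its abstract Tietze Extension Theorem for neighbourhood operators (Theorem \ref{neighTietze}), and the entire content of the paper's proof of the corollary is the compatibility check you do not need: namely, that a topologically continuous $f$ on the closed subspace $A$ is neighbourhood continuous with respect to the operator $\prec_A$ induced on $A$ by the topological neighbourhood operator of $X$ (this is exactly where the paper uses closedness of $A$ --- so that closures in $A$ agree with closures in $X$ --- playing the same role that closedness plays in your lemma). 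The machinery you build by hand lives instead inside the proof of Theorem \ref{neighTietze}: its ``Claim'' is your one-step lemma with the abstract Urysohn Lemma (Theorem \ref{neighurysohn}) in place of the classical one, and your Weierstrass $M$-test step is replaced by Lemma \ref{neighlemmaforTietze}, which requires a genuine argument because neighbourhood continuity is not obviously preserved by uniform limits. What your approach buys is a short, self-contained, elementary proof; what the paper's buys is that the same abstract theorem simultaneously yields the classical result and the hybrid large scale version (Corollary \ref{TietzeTheorem}), so the classical case costs only the subspace-operator verification. Your closing paragraph anticipates this correctly.
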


When studying the large scale properties of spaces, one typically replaces continuous functions to subsets of $\mathbb{R}^n$ with slowly oscillating functions to subsets of $\mathbb{R}^n$. A good example of this analogy in action can be found in \cite{DM}, where asymptotic dimension is approached by considering extensions of slowly oscillating functions to spheres, based on the approach to covering dimension via extensions of continuous maps to spheres. In the same paper, the authors also prove the following result:
 
 \begin{Theorem}[Dydak-Mitra \cite{DM}]\label{DMtheorem}
 Given a metric space $X$, any slowly oscillating function on a subset of $X$ to $[0,1]$ extends to a slowly oscillating function on the whole of $X$ to $[0,1]$.
 \end{Theorem}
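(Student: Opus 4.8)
The plan is to write down an explicit extension of Dugundji type and then verify directly that it is slowly oscillating. Recall that $f\colon A\to[0,1]$ being slowly oscillating means: for every $R>0$ and $\varepsilon>0$ there is a bounded set $B\subseteq A$ such that $|f(a)-f(a')|<\varepsilon$ whenever $a,a'\in A\setminus B$ and $d(a,a')\le R$. I must manufacture $g\colon X\to[0,1]$ with $g|_A=f$ enjoying the same property on $X$. Two tempting constructions fail and point to the right one: assigning to each $x$ the value of $f$ at a nearest point of $A$ is not slowly oscillating, since the nearest points of two close-by $x,x'$ can lie far apart once $x,x'$ are far from $A$; and the inf-convolution $\inf_{a\in A}[f(a)+d(x,a)/s(x)]$ (for a scale $s(x)\to\infty$) dips below $f$ at points where $f$ is not locally Lipschitz. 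This suggests an averaging (barycentric) extension.

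Concretely, I would fix a Whitney-type cover of $X\setminus A$ by balls whose radii are comparable to the distance to $A$, take a locally finite partition of unity $\{\phi_i\}$ subordinate to it with centres $x_i$, and for each index choose a point $a_i\in A$ with $d(x_i,a_i)\le 2\,d_A(x_i)$, where $d_A(x)=\inf_{a\in A}d(x,a)$. Setting
\[
g(x)=\begin{cases} f(x), & x\in A,\\ \sum_i \phi_i(x)\,f(a_i), & x\in X\setminus A, \end{cases}
\]
each value off $A$ is a convex combination of values of $f$, so $g$ takes values in $[0,1]$, and the standard Dugundji estimates give $g|_A=f$ (and continuity of $g$ when $f$ is continuous). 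The remaining task is slow oscillation.

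To check slow oscillation, fix $R,\varepsilon$ and split $X$ into two regions. On the region far from $A$, where $d_A(x)>M$, the geometry of the cover forces the sample points $a_i$ relevant near $x$ to be mutually close relative to $d_A(x)$, so $g$ is locally Lipschitz with constant at most $C/d_A(x)$; hence its oscillation over any $R$-ball is at most $C\,R/M$, which is below $\varepsilon$ once $M$ is chosen large, uniformly in $x$. On the region near $A$, where $d_A(x)\le M$, every sample point relevant to $x$ lies within the fixed scale $2M+R$ of $x$; applying slow oscillation of $f$ at that single scale produces a bounded set outside of which all these values of $f$ agree to within $\varepsilon$, whence so do $g(x)$ and $g(x')$ for $d(x,x')\le R$. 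Combining the two regional witnesses yields a single bounded set controlling the oscillation of $g$ at scale $R$.

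I expect the main obstacle to be the near-to-far bookkeeping: proving the local Lipschitz bound $C/d_A(x)$ for the barycentric formula and pinning down the fixed scale $2M+R$ that governs the relevant sample points, so that the two regional estimates genuinely glue along the interface $d_A(x)\approx M$. Two further points need care. First, the construction produces a continuous $g$, so if ``slowly oscillating'' is not assumed to entail continuity one should first pass to the closure of $A$, or simply observe that altering $g$ on a bounded set preserves slow oscillation, since enlarging the bounded witness set by an $R$-neighbourhood absorbs any bounded modification. Second, a more conceptual route would extend $f$ across the Higson compactification $hX$ and invoke the classical Tietze theorem on the compact Hausdorff space $hX$; this is attractive but hides exactly the same difficulty, namely that a slowly oscillating function on $A$ need not extend continuously over the closure of $A$ in $hX$ unless one already controls it by a construction of the above kind.
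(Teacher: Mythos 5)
Your construction has a genuine gap in the far region, and it is exactly where the generality of the statement bites. The estimate you rely on --- that the barycentric extension $g=\sum_i\phi_i(x)f(a_i)$ is locally Lipschitz with constant $C/d_A(x)$ --- does not follow from ``the geometry of the cover'' alone. The relevant sample points for $x$ are spread over a set of diameter comparable to $d_A(x)$, and slow oscillation of $f$ gives no control at that (growing) scale, so the only available bound is $|g(x)-g(x')|\le\sum_i|\phi_i(x)-\phi_i(x')|\cdot\|f\|_\infty$; to make this $\lesssim d(x,x')/d_A(x)$ you need the Whitney cover to have uniformly bounded multiplicity (equivalently, an $\ell_1$-Lipschitz partition of unity at scale $d_A$). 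Such covers simply do not exist in a general metric space: in an infinite-dimensional Hilbert space, Lebesgue's covering theorem applied to isometric copies of $[0,2D]^n$ shows that \emph{every} cover by sets of diameter $\le D$ with Lebesgue number $\ge 1$ has infinite multiplicity. More structurally, what your far-region argument needs is that the probability measures $\xi_x=\sum_i\phi_i(x)\delta_{a_i}$ vary slowly in $\ell_1$ at each fixed scale $R$, which is a Property A--type requirement on $X$; the theorem, however, is asserted for all metric spaces, including those where no such slowly varying systems of measures exist. So the proposal proves the result for doubling spaces (or spaces of finite Assouad--Nagata dimension), but the key inequality has no justification in general, and the barycentric strategy itself is obstructed there. (A smaller, fixable issue: points of $\mathsf{cl}(A)\setminus A$ are covered by no Whitney ball, so $g$ must be defined on them separately, e.g.\ by sampling a nearby point of $A$.)

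By contrast, the paper's route is entirely soft and dimension-free, which is why it covers all metric spaces. One first checks that metric spaces are ls-normal: given $A$ with coarse neighbourhood $U$, the set $V=\{x\in X \mid d(x,A)\le d(x,X\setminus U)\}$ is an intermediate coarse neighbourhood (if $\st(A,\mathcal{U})$ met $X\setminus V$ in an unbounded set for a cover $\mathcal{U}$ by $R$-balls, then doubling the radii would make $\st(A,\mathcal{U}')$ meet $X\setminus U$ in an unbounded set). Normality then feeds into the abstract Urysohn Lemma for neighbourhood operators (a dyadic family of nested coarse neighbourhoods yields a slowly oscillating function via $f(x)=\inf\{t \mid x\in A_t\}$) and into the Tietze theorem for neighbourhood operators (the classical successive-approximation scheme $g=\sum_n g_n$ with geometrically decaying bounds). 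No partition of unity, multiplicity bound, or measure-valued map is ever needed; the only metric input is the distance-comparison set $V$. If you want to salvage your approach, you should either restrict the claim to spaces admitting bounded-multiplicity Whitney covers, or replace the barycentric step by a sublevel-set construction of the Urysohn type, which uses only the order structure of $[0,1]$.
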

 
This may be seen as a large scale Tietze Extension Theorem for metric spaces. In this paper, we prove a more general version of this result for what we call large scale normal spaces. In fact, we work in the more general context of hybrid large scale spaces introduced in \cite{ADH} -- that is, sets equipped with a large scale structure and a topology satisfying a compatibility axiom -- so that we are interested in maps which are both slowly oscillating and continuous. Results for large scale spaces (with no topology) can be recovered as special cases of the hybrid results by endowing the large scale space with the discrete topology.

To make the analogy with the classical topological situation clear, we prove a version of Urysohn's Lemma and the Tietze Extension Theorem for sets equipped with a neighbourhood operator satisfying certain axioms. This approach via abstract neighbourhood operators may be of independent interest to readers outside of coarse geometry. From the general versions of these results, we obtain both the classical topological results and the new results for hybrid large scale spaces. We then study (hybrid) large scale normal spaces further and give some examples of spaces which are hybrid large scale normal as well as some examples which aren't. As suggested by the above theorem of Dydak and Mitra, all metric spaces are large scale normal. We also look at hybrid large scale normality for certain special types of ls-structures, such as those arising from a group structure or induced by a metrizable compactification. Finally, we look at some applications of this theory to the Higson compactification, a compactification which captures large scale behaviour of a space, and the Higson corona.

\section{Hybrid large scale spaces}
The main context for the results in this paper is that of a hybrid large scale space, which is a set equipped with a topology (representing the small scale) and a large scale structure (or ls-structure) which are compatible in a suitable sense. The idea to consider a space equipped with a topology and ls-structure which are compatible goes back to Roe (see Chapter 2 of \cite{Roe}). Note that the notion of ls-structure is equivalent to the notion of coarse structure in the sense of \cite{Roe} (see \cite{DH}). 

Let us recall the definition of ls-structure from \cite{DH}. Let $X$ be a set. Recall that the \textbf{star} $\st(B, \mathcal{U})$ of a subset $B$ of $X$ with respect to a family $\mathcal{U}$ of subsets of $X$ is the union of those elements of $\mathcal{U}$ that intersect $B$. More generally, for two families $\mathcal{B}$ and $\mathcal{U}$ of subsets of X, $\st(\mathcal{B}, \mathcal{U})$ is the family $\{\st(B, \mathcal{U}) \mid B \in \mathcal{B}\}$.

\begin{Definition}
A \textbf{large scale structure} $\mathcal{L}$ on a set $X$ is a nonempty
collection of families of subsets of $X$ (which we call the \textbf{uniformly bounded families}) satisfying the following conditions:
\begin{itemize}
\item[(1)] $\mathcal{B}_1 \in \mathcal{L}$ implies $\mathcal{B}_2 \in \mathcal{L}$ if each element of $\mathcal{B}_2$ consisting of more than one point is contained in some element of $\mathcal{B}_1$.
\item[(2)] $\mathcal{B}_1, \mathcal{B}_2 \in \mathcal{L}$ implies $\st(\mathcal{B}_1, \mathcal{B}_2) \in \mathcal{L}$.
\end{itemize}
A set equipped with a large scale structure will be called an \textbf{ls-space}. A uniformly bounded family which is a cover is also called a \textbf{scale}.
\end{Definition}
 
\begin{Definition}
A \textbf{hybrid large scale space} (or \textbf{hls-space} for short) is a set $X$ equipped with both a large scale structure and a topology (which together we call the hybrid large scale structure on $X$) such that there is a uniformly bounded cover of $X$ which consists of open sets. We call a uniformly bounded open cover an \textbf{open scale}.
\end{Definition}

Note that in an hls-space, any scale can be coarsened to an open scale (we say that $\mathcal{U}$ \textbf{coarsens} $\mathcal{V}$ in case $\mathcal{V}$ refines $\mathcal{U}$).

\begin{Lemma}\label{FirstLemma}
Let $X$ be a set equipped with an ls-structure and a topology. Then the following are equivalent:
\begin{itemize}
\item[(1)] $X$, together with the two structures, gives an hls-space.
\item[(2)] there is a uniformly bounded cover $\mathcal{U}$ relative to the ls-structure such that for every subset $A$ in $X$, $\mathsf{cl}(A) \subseteq \st(A, \mathcal{U})$. 
\end{itemize}
Moreover, if $X$ is an hls-space, then for any open scale $\mathcal{U}$ and any subset $A$ of $X$, we have $\mathsf{cl}(A) \subseteq \st(A, \mathcal{U})$.
\end{Lemma}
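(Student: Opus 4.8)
The plan is to prove the two implications of the equivalence and then deduce the ``moreover'' clause, so let me sketch each part in turn.

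For the direction $(1) \Rightarrow (2)$, I would start with the definition of hls-space: there exists a uniformly bounded open cover $\mathcal{U}$ of $X$. My claim is that this same $\mathcal{U}$ witnesses condition $(2)$. So fix an arbitrary subset $A \subseteq X$ and let $x \in \mathsf{cl}(A)$ be a point in its closure. Since $\mathcal{U}$ is a cover, $x$ lies in some $U \in \mathcal{U}$, and since $U$ is open, $U$ is a neighbourhood of $x$. Because $x$ is in the closure of $A$, every neighbourhood of $x$ meets $A$, so in particular $U \cap A \neq \emptyset$. But $U \cap A \neq \emptyset$ means precisely that $U$ intersects $A$, so by the definition of the star, $U \subseteq \st(A, \mathcal{U})$. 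Since $x \in U$, we conclude $x \in \st(A, \mathcal{U})$, giving $\mathsf{cl}(A) \subseteq \st(A, \mathcal{U})$ as required.

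For the direction $(2) \Rightarrow (1)$, I am given a uniformly bounded cover $\mathcal{U}$ satisfying $\mathsf{cl}(A) \subseteq \st(A, \mathcal{U})$ for all $A$, and I must produce an open scale, i.e.\ a uniformly bounded cover consisting of open sets. The natural candidate is to ``fatten'' each element of $\mathcal{U}$ to an open set while keeping it uniformly bounded. Concretely, I would consider the family $\mathcal{V} = \{\st(x, \mathcal{U}) \mid x \in X\}$ or, more carefully, replace each $U \in \mathcal{U}$ by $\st(U, \mathcal{U})$ and then take interiors. The key point I expect to need is that the closure hypothesis forces stars of $\mathcal{U}$ to contain open neighbourhoods of each of their points: if $x \in U \in \mathcal{U}$, I want an open set around $x$ that is contained in some uniformly bounded set. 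The hypothesis applied to the complement $A = X \setminus V$ of a candidate open set $V$ should yield that the interior of $\st(U, \mathcal{U})$ already covers $U$; taking these interiors over all $U$ produces an open cover, and it is uniformly bounded because it refines $\st(\mathcal{U}, \mathcal{U})$, which is a scale by axiom $(2)$ of the ls-structure. This fattening-and-taking-interiors step is where the real content lies and will be the main obstacle, since I must verify both that the resulting open family still covers $X$ and that it remains uniformly bounded.

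For the ``moreover'' clause, I want to upgrade $(2)$ so that the inclusion $\mathsf{cl}(A) \subseteq \st(A, \mathcal{U})$ holds not merely for some witnessing cover but for \emph{every} open scale $\mathcal{U}$. This follows by exactly the same argument as in $(1) \Rightarrow (2)$: the only properties of $\mathcal{U}$ used there were that it is a cover and that its members are open, both of which hold for any open scale by definition. Thus the first implication's argument, applied verbatim to an arbitrary open scale, yields the stronger statement. I would present this clause as an immediate consequence rather than a separate construction. Overall the forward implication and the moreover clause are short and use only that open covers detect closure points, while the backward implication carries the weight and requires the interior-fattening construction together with a careful appeal to the ls-structure axioms to control boundedness.
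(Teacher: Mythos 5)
Your direction $(1)\Rightarrow(2)$ and the ``moreover'' clause are correct and coincide with the paper's argument (the paper simply proves the moreover clause first and observes that $(1)\Rightarrow(2)$ follows from it). For $(2)\Rightarrow(1)$ you also pick the same construction as the paper, namely the interiors of the sets $\st(U,\mathcal{U})$ for $U\in\mathcal{U}$, and you correctly note that uniform boundedness is automatic because this family refines $\st(\mathcal{U},\mathcal{U})$, which is a scale by the ls-structure axioms. However, you stop short of the one step that carries all the content: you say the hypothesis ``should yield'' that each $U$ lies inside the interior of $\st(U,\mathcal{U})$ and explicitly defer this as ``the main obstacle,'' so as written the proposal does not contain a proof of the covering property. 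Your phrasing is also imprecise about which set the hypothesis gets applied to: it must be applied to $X\setminus\st(U,\mathcal{U})$, the complement of the star itself, not the complement of ``a candidate open set $V$'' (the open set in question is exactly what you are trying to produce).

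The missing argument is short. Apply hypothesis $(2)$ with $A=X\setminus\st(U,\mathcal{U})$ to obtain
\[
\mathsf{cl}\bigl(X\setminus\st(U,\mathcal{U})\bigr)\subseteq \st\bigl(X\setminus\st(U,\mathcal{U}),\,\mathcal{U}\bigr).
\]
Next observe that any element of $\mathcal{U}$ meeting $U$ is, by definition of the star, contained in $\st(U,\mathcal{U})$; hence any element of $\mathcal{U}$ meeting $X\setminus\st(U,\mathcal{U})$ is disjoint from $U$, which gives
\[
\st\bigl(X\setminus\st(U,\mathcal{U}),\,\mathcal{U}\bigr)\subseteq X\setminus U.
\]
Combining and taking complements yields $U\subseteq X\setminus \mathsf{cl}\bigl(X\setminus\st(U,\mathcal{U})\bigr)=\mathsf{int}\bigl(\st(U,\mathcal{U})\bigr)$, so the interiors of the sets $\st(U,\mathcal{U})$ cover $X$ because $\mathcal{U}$ does, and they form the required open scale. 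This chain of inclusions is precisely the paper's proof; once it is inserted, your proposal is complete and identical in approach to the paper's.
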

\begin{proof}
To prove the last statement, notice that for any open scale $\mathcal{U}$ and subset $A$ of an hls-space $X$, if $x \in \mathsf{cl}(A)$ and $x \in U \in \mathcal{U}$, then $U$ intersects $A$. This also gives (1) $\Rightarrow$ (2).

(2) $\Rightarrow$ (1): Let $\mathcal{U}$ be as in (2). Then the interiors of the elements of the cover $\st(\mathcal{U}, \mathcal{U})$ form an uniformly bounded open family $\mathcal{V}$ of subsets. Moreover, if $U \in \mathcal{U}$, then $\mathsf{cl}(X \setminus \st(U, \mathcal{U})) \subseteq \st(X \setminus \st(U, \mathcal{U}), \mathcal{U}) \subseteq X \setminus U$, so $\mathcal{V}$ coarsens $\mathcal{U}$, and thus is a cover as required.
\end{proof}

\begin{Example}
The canonical example of a large scale space is a metric space $(X, d)$ equipped with the ls-structure consisting of all families $\mathcal{U}$ of subsets which refine $\{B(x, R) \mid x\in X\}$ for some $R > 0$. In fact, this ls-structure together with the metric topology gives an hls-space.
\end{Example} 

Every result in this paper which is proved for hls-spaces provides a version of the result for ls-spaces as a special case. This is because any ls-space  can be viewed as an hls-space by equipping it with the discrete topology. On the other hand, every topological space can be viewed as an hls-space by equipping it with the ls-structure consisting of all families of subsets. 

Let us now consider the notion of connectedness in the context of (hybrid) large scale spaces. In any scale category (see \cite{ADH}) an important issue is connectedness at some scale, that is, the existence of a scale such that any two points in $X$ are connected by a chain in that scale.

\begin{Definition}\label{U-componentDef}
 Given a cover $\mathcal{U}$ of a set $X$ and two points $x$ and $y$ in $X$, we say that $x$ and $y$ are $\mathcal{U}$-connected and write $x\sim_{\mathcal{U}} y$ if there is a finite sequence $U_i$, $1\leq i\leq n$, of elements of $\mathcal{U}$ such that $U_i\cap U_{i+1}\ne \emptyset$ for all $i < n$, $x\in U_1$ and $y \in U_n$. A \textbf{$\mathcal{U}$-component} of $X$ is an equivalence class of the equivalence relation $\sim_\mathcal{U}$. We say that $X$ is \textbf{$\mathcal{U}$-connected}, or \textbf{connected at the scale $\mathcal{U}$}, if it has at most one $\mathcal{U}$-component.
\end{Definition}

In the case of ls-spaces one is often interested in spaces that are $\mathcal{U}$-connected for some uniformly bounded cover $\mathcal{U}$ (for example, every geodesic metric space, as an ls-space, is such). This is not to be confused with the weaker condition, called \textbf{coarse connectedness} by Roe \cite{Roe}, which, in terms of uniformly bounded covers, can be translated as saying that any two points are $\mathcal{U}$-connected for some uniformly bounded cover $\mathcal{U}$. Clearly this is the same as to say that all finite subsets of $X$ are bounded (a subset of an ls-space is called \textbf{bounded} if it is an element of some uniformly bounded cover). 

\begin{Definition}\label{CoarseComponentDef}
A \textbf{coarse component} of a point $x$ in an ls-space is the union of all the bounded sets containing $x$.
\end{Definition}

A non-empty ls-space $X$ is thus coarsely connected in the sense of Roe if it has only one coarse component. To distinguish Roe's version of connectivity from the stronger one we introduce the following concept:

\begin{Definition}\label{ScaleConnectedDef}
An ls-space is \textbf{scale connected} if it is $\mathcal{U}$-connected for some uniformly bounded cover $\mathcal{U}$.
\end{Definition}

For example, the subspace $\{x^2 \mid  x \in \mathbb{N} \}$ with the ls-structure induced by the usual metric, is coarsely connected but not scale connected. In an hls-space, the topology of $X$ dictates large scale connectivity:

\begin{Proposition}
Let $X$ be an hls-space and let $\mathcal{U}$ be an open scale. Then
\begin{itemize}
\item[(1)] the $\mathcal{U}$-components of $X$ are open-closed,
\item[(2)] the coarse components of $X$ are open-closed,
\item[(3)] if the topology of $X$ is connected, then $X$ is connected at all open scales.
\end{itemize} 
\end{Proposition}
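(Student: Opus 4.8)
The plan is to isolate a single structural property shared by $\mathcal{U}$-components and coarse components, and then read off both openness and closedness from it. Call a subset $S \subseteq X$ \emph{$\mathcal{U}$-saturated} if every $U \in \mathcal{U}$ that meets $S$ is contained in $S$. Both kinds of components have this property: if some $U \in \mathcal{U}$ meets a $\mathcal{U}$-component $C$ at a point $z$, then every $w \in U$ satisfies $w \sim_{\mathcal{U}} z$ via the single-element chain $U$, so $w \in C$; the same one-step chain argument, together with the fact (to be justified for part (2)) that points joined by such a chain lie in a common bounded set, shows that coarse components are $\mathcal{U}$-saturated as well. The two assertions I would then prove for an arbitrary $\mathcal{U}$-saturated set $S$ are (a) $S$ is open and (b) $S$ is closed.

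For openness I would argue directly from the covering property: given $x \in S$, since $\mathcal{U}$ is a cover there is some $U \in \mathcal{U}$ with $x \in U$, and $\mathcal{U}$-saturation gives $U \subseteq S$; as $\mathcal{U}$ is an open scale, $U$ is an open neighbourhood of $x$ inside $S$, so $S$ is open. For closedness I would invoke the ``Moreover'' clause of Lemma \ref{FirstLemma}: for the open scale $\mathcal{U}$ we have $\mathsf{cl}(S) \subseteq \st(S, \mathcal{U})$. But $\st(S, \mathcal{U})$ is the union of those $U \in \mathcal{U}$ meeting $S$, and by $\mathcal{U}$-saturation each such $U$ lies in $S$; hence $\st(S, \mathcal{U}) = S$ and $\mathsf{cl}(S) \subseteq S$. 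Applied to $\mathcal{U}$-components this gives (1), and applied to coarse components it will give (2).

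For part (2) the only extra ingredient beyond the template above is the claim that $x \sim_{\mathcal{U}} y$ forces $x$ and $y$ into a common coarse component. I would establish this by showing that a chain of length $n$ in $\mathcal{U}$ places $y$ in an iterated star $\st^{n-1}(\{x\}, \mathcal{U})$, and that each such iterated star is a bounded set: applying axiom (2) of the ls-structure repeatedly shows $\st(\cdots\st(\mathcal{U},\mathcal{U})\cdots) \in \mathcal{L}$, and axiom (1) then certifies $\st^{n-1}(\{x\}, \mathcal{U})$ as a bounded set containing both $x$ and $y$. Consequently every $\mathcal{U}$-component sits inside a single coarse component, so each coarse component is a union of $\mathcal{U}$-components and in particular $\mathcal{U}$-saturated, and the openness/closedness argument applies verbatim.

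Finally, part (3) is a short deduction: by (1) the $\mathcal{U}$-components form a partition of $X$ into nonempty open-closed sets (nonempty since $\sim_{\mathcal{U}}$ is reflexive, using that $\mathcal{U}$ covers $X$). If the topology is connected, the only open-closed subsets are $\emptyset$ and $X$, so there is at most one $\mathcal{U}$-component, i.e. $X$ is $\mathcal{U}$-connected; since $\mathcal{U}$ was an arbitrary open scale, $X$ is connected at every open scale. The conceptual crux of the whole argument is the closedness step, the sole place where the hls compatibility is used via Lemma \ref{FirstLemma}; the fiddliest point is verifying in (2) that $\mathcal{U}$-chains stay within a coarse component, but this reduces to routine bookkeeping with the two ls-axioms.
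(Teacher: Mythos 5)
Your proof is correct, and its skeleton (components are unions of elements of the open scale, hence open; then deduce closedness) matches the paper's, but the two mechanisms you choose differ from the paper's very terse argument in ways worth noting. For closedness the paper does not invoke Lemma \ref{FirstLemma} at all: the $\mathcal{U}$-components are open and partition $X$, so each is closed because its complement is a union of the others; the hybrid compatibility enters only through the existence of an open scale. Your alternative, $\mathsf{cl}(S)\subseteq\st(S,\mathcal{U})=S$ for a saturated set $S$, is valid and has the mild advantage of applying to a single saturated set without reference to a partition, but it makes your closing claim --- that the closure-star step is ``the sole place where the hls compatibility is used'' --- a feature of your proof rather than of the statement. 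For part (2) the paper's one-line route is different: every bounded set is contained in an open bounded set (star it with $\mathcal{U}$), so each coarse component is a union of open sets, and closedness again follows from the partition. Your route via ``coarse components are unions of $\mathcal{U}$-components'' works, but the iterated-star bookkeeping is more than needed: since each $U\in\mathcal{U}$ is itself bounded, a one-step argument already shows coarse components are $\mathcal{U}$-saturated. Finally, be aware that both your inference (disjointness/union structure of coarse components) and the paper's partition step silently use the standard fact that the union of two intersecting bounded sets is bounded, which follows from the two ls-axioms; since you are otherwise careful about axiom-level bookkeeping, that is the one fact you should have flagged and proved. Part (3) is identical in both proofs.
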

\begin{proof}
Given an open scale $\mathcal{U}$, the $\mathcal{U}$-components of $X$ are clearly open and they partition $X$, which proves (1). Part (2) follows from the fact that every bounded set is contained in an open bounded set. Finally, (3) follows from (1).
\end{proof}

\begin{Lemma}\label{DecompositionLemma}
 If $X$ is a hybrid large scale space and $\mathcal{U}=\{U_s\}_{s\in S}$ is an open scale of $X$, then 
 each $\mathcal{U}$-component of $X$ can be expressed as a union $\bigcup_{n=1}^\infty A_n$, where the sequence $\{A_n\}$ satisfies the following
 properties:
 \begin{enumerate}
 \item each $A_n$ is closed and bounded,
 \item $A_n$ is contained in the interior of $A_{n+1}$ for each $n\ge 1$.
 \end{enumerate}
\end{Lemma}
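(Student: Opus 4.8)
The plan is to build the sets $A_n$ as closures of iterated stars of a single point. Fix a $\mathcal{U}$-component $C$ and choose a base point $x_0 \in C$. Writing $\st^n$ for the $n$-fold iteration of the operation $\st(\,\cdot\,, \mathcal{U})$, set $B_n = \st^n(\{x_0\}, \mathcal{U})$, so that $B_1 = \st(\{x_0\}, \mathcal{U})$ and $B_{n+1} = \st(B_n, \mathcal{U})$. Unwinding Definition \ref{U-componentDef}, a point $y$ lies in some $B_n$ exactly when it is joined to $x_0$ by a $\mathcal{U}$-chain, so $\bigcup_{n=1}^\infty B_n = C$; moreover the $B_n$ are nested, since $\mathcal{U}$ being a cover forces $B_n \subseteq \st(B_n, \mathcal{U}) = B_{n+1}$.

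First I would record three properties of the $B_n$. Each $B_n$ is open, since $B_1$ is a union of elements of the open scale $\mathcal{U}$ and every subsequent star operation again yields a union of elements of $\mathcal{U}$. Each $B_n$ is bounded: choosing $U_0 \in \mathcal{U}$ with $x_0 \in U_0$ and using monotonicity of the star in its first argument gives $B_n \subseteq \st^n(U_0, \mathcal{U})$, and $\st^n(U_0, \mathcal{U})$ is a member of the iterated family $\st(\st(\cdots \st(\mathcal{U}, \mathcal{U})\cdots))$, which lies in $\mathcal{L}$ by repeated use of condition (2); since any subset of a bounded set is bounded (condition (1)), each $B_n$ is bounded. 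Finally, the closure is controlled: applying the ``moreover'' clause of Lemma \ref{FirstLemma} to the open scale $\mathcal{U}$ yields $\mathsf{cl}(B_n) \subseteq \st(B_n, \mathcal{U}) = B_{n+1}$.

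With these in hand I would set $A_n = \mathsf{cl}(B_n)$. Property (1) is then immediate: $A_n$ is closed by definition, and $A_n \subseteq B_{n+1}$ shows it is contained in a bounded set, hence bounded. For property (2), the openness of $B_{n+1}$ combines with the closure bound to give $A_n = \mathsf{cl}(B_n) \subseteq B_{n+1} = \mathrm{int}(B_{n+1}) \subseteq \mathrm{int}(\mathsf{cl}(B_{n+1})) = \mathrm{int}(A_{n+1})$. That $\bigcup_n A_n = C$ follows by sandwiching: $B_n \subseteq A_n \subseteq B_{n+1}$ for every $n$, so the union of the $A_n$ is squeezed between $\bigcup_n B_n = C$ and itself. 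The genuine crux — and the step I expect to require the most care — is arranging closedness, boundedness, and the strict interior nesting all at once; this is precisely what the openness of the scale buys, since it is the openness of $B_{n+1}$ that upgrades the containment $\mathsf{cl}(B_n) \subseteq B_{n+1}$ into containment in $\mathrm{int}(A_{n+1})$. Without an open scale the interleaving of the topological closure with the star operation would collapse, which is why the hls-structure (with $\mathcal{U}$ an open scale) is essential here.
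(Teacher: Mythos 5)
Your proof is correct and follows essentially the same route as the paper: iterate the star operation $B_{n+1}=\st(B_n,\mathcal{U})$ starting from a seed inside the component (the paper seeds with a single element $U_s\in\mathcal{U}$, you with $\st(\{x_0\},\mathcal{U})$, an immaterial difference), then take $A_n=\mathsf{cl}(B_n)$ and invoke the ``moreover'' clause of Lemma \ref{FirstLemma} to get $\mathsf{cl}(B_n)\subseteq B_{n+1}$. Your write-up simply makes explicit the boundedness, openness, and nesting checks that the paper leaves to the reader.
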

\begin{proof}
Let $A$ be a $\mathcal{U}$-component of $X$, and let $U_s \in \mathcal{U}$ be contained in $A$. Consider the sequence $B_n$ defined as follows:
\begin{enumerate}
\item[1.] $B_{1}= U_s$,
\item[2.] $B_{n+1}=st(B_{n},\mathcal{U})$.
\end{enumerate}
By the definition of $\mathcal{U}$-component, the union $\bigcup_{n=1}^\infty B_n$ is the whole of $A$. Define $A_n = \mathrm{cl}(B_n)$ for each $n$. Then the sequence $A_n$ satisfies the conditions by Lemma \ref{FirstLemma}. 
\end{proof}

\begin{Lemma}\label{precompactisbounded}
 If $X$ is a hybrid large scale space that is coarsely connected, then all precompact subsets (that is, subsets whose closure is compact) of $X$ are bounded.
\end{Lemma}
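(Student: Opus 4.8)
The plan is to exploit two ingredients: the fact that an hls-space admits an open scale, so that compactness can be brought to bear, and the equivalence noted just before this lemma that coarse connectedness means precisely that every finite subset is bounded. Let $K$ be a precompact subset of $X$; we may assume $K \neq \emptyset$, since the empty set is trivially bounded. First I would fix an open scale $\mathcal{U} = \{U_s\}_{s \in S}$, which exists by the definition of an hls-space. Since the elements of $\mathcal{U}$ are open and cover $X$, they cover the compact set $\overline{K}$, and by compactness finitely many of them, say $U_1, \dots, U_n$ (all non-empty), already cover $\overline{K}$, and hence cover $K$.

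Next I would glue these finitely many pieces together using the two axioms of an ls-structure. Choosing a point $x_i \in U_i$ for each $i$ yields a finite set $F = \{x_1, \dots, x_n\}$, which is bounded by coarse connectedness; let $\mathcal{B}$ be a scale with $F \in \mathcal{B}$. Since each $U_i$ meets $F$ at $x_i$, we have $U_1 \cup \dots \cup U_n \subseteq \st(F, \mathcal{U})$, and $\st(F, \mathcal{U})$ is an element of $\st(\mathcal{B}, \mathcal{U})$, which is again a scale by axiom (2). Thus $K \subseteq \overline{K} \subseteq \st(F, \mathcal{U})$, with $\st(F, \mathcal{U})$ an element of a scale. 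Finally, adjoining $K$ to the family $\st(\mathcal{B}, \mathcal{U})$ and applying axiom (1)—every multi-point element of the enlarged family is contained in an element of $\st(\mathcal{B}, \mathcal{U})$, namely $K$ inside $\st(F,\mathcal{U})$—shows that $K$ itself lies in a scale, that is, $K$ is bounded.

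The step I expect to require the most care is the passage from ``each $U_i$ is bounded'' to ``the finite union $U_1 \cup \dots \cup U_n$ is bounded.'' This is exactly where coarse connectedness is indispensable: in a general ls-space a finite union of bounded sets need not be bounded, so one cannot appeal to any abstract finite additivity of boundedness. The mechanism that makes the union bounded is the star operation applied to a finite—hence bounded—transversal $F$ of the subcover, which absorbs all the $U_i$ simultaneously while remaining inside a single scale via axiom (2); axiom (1) then permits replacing the star by the smaller set $K$. Everything else—extracting the finite subcover from compactness and verifying the set-theoretic containments—is routine.
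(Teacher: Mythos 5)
Your proof is correct, but it takes a more self-contained route than the paper. The paper's proof is a two-line reduction: it covers $\mathsf{cl}(K)$ using the decompositions of the $\mathcal{U}$-components from Lemma \ref{DecompositionLemma} (each component being an increasing union of closed bounded sets $A_n$ with $A_n$ inside the interior of $A_{n+1}$), extracts a finite subcover by compactness, and then invokes the fact that in a coarsely connected ls-space a finite union of bounded sets is bounded. You instead bypass Lemma \ref{DecompositionLemma} entirely: you take a finite subcover of $\mathsf{cl}(K)$ directly from an open scale $\mathcal{U}$, choose a finite transversal $F$ (bounded since coarse connectedness is equivalent to all finite sets being bounded, as noted in the paper), and observe that the whole subcover is absorbed into the single set $\st(F,\mathcal{U})$, which lies in the scale $\st(\mathcal{B},\mathcal{U})$ by axiom (2); axiom (1) then makes $K$ itself bounded. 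In effect you have inlined the proof of the ``finite union of bounded sets is bounded'' fact (which is proved by exactly this star-absorption mechanism) rather than citing it, and you trade the decomposition machinery for a direct appeal to the ls-structure axioms. The paper's version is shorter given the lemmas already established; yours is more elementary and makes the role of the star axiom and of coarse connectedness explicit. One cosmetic remark: in your final application of axiom (1), the element needing a bound in the enlarged family $\st(\mathcal{B},\mathcal{U})\cup\{K\}$ is $K$ (contained in $\st(F,\mathcal{U})$); the phrasing ``namely $K$ inside $\st(F,\mathcal{U})$'' reads as if $K$ were the containing set, but the intended meaning is clear and the step is valid.
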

\begin{proof}
This is an easy consequence of Lemma \ref{DecompositionLemma} and the fact that in any coarsely connected ls-space, the finite union of bounded sets is bounded.
\end{proof}

\begin{Definition}
A subset $K$ of an ls-space $X$ is called \textbf{weakly bounded} if its intersection with any coarse component is bounded.
\end{Definition}

We now recall the definition of a slowly oscillating map. For a coarsely connected ls-space $X$, a slowly oscillating map is usually defined as a map to a metric space $M$ such that for every uniformly bounded family $\mathcal{U}$ in $X$ and every $\varepsilon > 0$ there is a bounded set $K$ in $X$ such that $(U \in \mathcal{U}) \wedge (U \cap K = \varnothing) \implies \mathsf{diam}(f(U)) < \varepsilon$. If $X$ is not coarsely connected, then this definition is too restrictive. Indeed, it is easy to check that, under this definition, a slowly oscillating map must be constant on all but one of the coarse components of $X$. Thus we use the following definition taken from \cite{ADH}, which reduces to the usual definition when $X$ is coarsely connected, and agrees with the classical definition of Higson function in \cite{Roe} for proper metric spaces (or more generally for proper hls-spaces introduced in the next section). 

\begin{Definition}\label{slowdef}
Let $X$ be an  ls-space, $M$ a metric space and $f: X \rightarrow M$ a map. Then $f$ is \textbf{slowly oscillating} if for every uniformly bounded family $\mathcal{U}$ in $X$ and every $\varepsilon > 0$ there is a weakly bounded set $K$ in $X$ such that $(U \in \mathcal{U}) \wedge (U \cap K = \varnothing) \implies \mathsf{diam}(f(U)) < \varepsilon$. 
\end{Definition}

Note that under Definition \ref{slowdef}, a map from an ls-space $X$ is slowly oscillating if and only if its restriction to each coarse component is slowly oscillating. 

\section{Proper hls-spaces}
By Lemma \ref{precompactisbounded}, every precompact subset of a coarsely connected hls-space is bounded. On the other hand, even for metric spaces with the induced hls-structure, it may not be the case that all bounded sets are precompact.

\begin{Definition}
A hybrid large scale space $X$ is called \textbf{proper} if its topology is Hausdorff, and its family of bounded sets is identical with the family of all precompact subsets of $X$. In particular, $X$ is (topologically) locally compact and coarsely connected.
\end{Definition}

For example, any proper metric space (i.e.~in which bounded sets are precompact) together with the induced hls-structure is a proper hls-space. It might initially appear that the notion of a proper hybrid large scale space is a generalization of the notion of coarsely connected \textbf{proper coarse space} introduced by Roe
(see \cite{Roe}, Definition 2.35), since the assumption of paracompactness is missing in our definition. However, Corollary \ref{ParacompactnessIssue} below shows that a proper hls-space $X$ must be paracompact, so the two notions are, in fact, identical. 

\begin{Corollary}\label{ParacompactnessIssue}
 The topology of any proper hybrid large scale space is paracompact.
\end{Corollary}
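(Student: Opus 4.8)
The plan is to reduce the statement to a classical fact of point-set topology by exploiting the compact exhaustions produced by Lemma \ref{DecompositionLemma}. First I would fix an open scale $\mathcal{U}$, which exists because $X$ is an hls-space. By the Proposition preceding Lemma \ref{DecompositionLemma}, the $\mathcal{U}$-components of $X$ are open and closed, and they partition $X$; hence $X$ is the topological sum (coproduct) of its $\mathcal{U}$-components. Since a disjoint topological sum of paracompact spaces is again paracompact — given an open cover, refine it locally finitely inside each (open) summand and take the union, which is locally finite in $X$ precisely because the summands are pairwise disjoint and open — it suffices to prove that a single $\mathcal{U}$-component is paracompact.

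Next I would show that each $\mathcal{U}$-component $A$ is $\sigma$-compact. By Lemma \ref{DecompositionLemma} one can write $A = \bigcup_{n=1}^\infty A_n$ with each $A_n$ closed and bounded and $A_n \subseteq \mathrm{int}(A_{n+1})$. Here \emph{properness} is the crucial input: in a proper hls-space the bounded sets are exactly the precompact sets, so each $A_n$, being closed and bounded, equals its own (compact) closure and is therefore compact. Thus $A$ carries a compact exhaustion $A_1 \subseteq \mathrm{int}(A_2) \subseteq A_2 \subseteq \cdots$, so $A$ is $\sigma$-compact; it is moreover locally compact and Hausdorff, being an open subspace of the locally compact Hausdorff space $X$.

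Finally I would invoke the classical theorem that every locally compact, $\sigma$-compact Hausdorff space is paracompact. For completeness one can construct a locally finite refinement directly from the exhaustion: set $K_n = A_n \setminus \mathrm{int}(A_{n-1})$, which is compact, and $V_n = \mathrm{int}(A_{n+1}) \setminus A_{n-2}$, which is open, with the convention $A_m = \varnothing$ for $m \le 0$; note $K_n \subseteq V_n$ since $A_{n-2} \subseteq \mathrm{int}(A_{n-1})$ and $A_n \subseteq \mathrm{int}(A_{n+1})$. Given an open cover $\mathcal{W}$ of $A$, cover each compact $K_n$ by finitely many sets of the form $W \cap V_n$, $W \in \mathcal{W}$. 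The collection of all these sets refines $\mathcal{W}$ and covers $A$ because the $K_n$ do; it is locally finite because any point lies in some $\mathrm{int}(A_m)$, and $\mathrm{int}(A_m) \cap V_n = \varnothing$ whenever $n \ge m+2$, so this neighbourhood meets only the finitely many refinement sets indexed by $n \le m+1$. Combining this with the topological-sum reduction of the first step yields paracompactness of $X$.

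I expect the only genuine content to be the translation of the large-scale hypothesis into a compact exhaustion, which is exactly what Lemma \ref{DecompositionLemma} together with properness (bounded $=$ precompact) delivers; the remaining steps are standard point-set topology and should present no real obstacle. A minor point to verify carefully is that \textbf{precompact} is read as \emph{having compact closure}, so that a closed bounded set is genuinely compact, and that the empty-index conventions in the exhaustion argument do not compromise local finiteness of the refinement.
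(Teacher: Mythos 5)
Your proof is correct. The core reduction is the same as the paper's: fix an open scale, apply Lemma \ref{DecompositionLemma} to each $\mathcal{U}$-component, and use properness (bounded $=$ precompact, plus Hausdorff) to turn the closed bounded sets $A_n$ into a compact exhaustion with $A_n \subseteq \mathrm{int}(A_{n+1})$. Where you diverge is in how paracompactness is extracted from that exhaustion. The paper builds a partition of unity subordinate to the given cover, inductively extending a partition of unity from $A_n$ to $A_{n+1}$ via Theorem 1.5 of \cite{DyPU}, and relies on the characterization of paracompactness (for Hausdorff spaces) by subordinate partitions of unity; the gluing across components is handled implicitly by the open-closed decomposition. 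You instead make the topological-sum reduction explicit, observe that each component is locally compact, $\sigma$-compact and Hausdorff, and invoke (indeed reprove, via the sets $K_n = A_n \setminus \mathrm{int}(A_{n-1})$ and $V_n = \mathrm{int}(A_{n+1}) \setminus A_{n-2}$) the classical theorem that such spaces are paracompact. Your route is more elementary and self-contained --- it needs no external partition-of-unity extension theorem and produces the locally finite open refinement directly --- while the paper's route gets the subordinate partition of unity itself as a byproduct, which is often the form of paracompactness one actually wants to use. Your cautionary remarks (precompact meaning compact closure, and the empty-index conventions) are exactly the right points to check, and both hold here.
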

\begin{proof}
Pick an open scale $\mathcal{U}$ and express each $\mathcal{U}$-component of $X$ as in Lemma \ref{DecompositionLemma}. Suppose $\mathcal{V}$ is an open cover of $X$. Since each $A_n$ is closed and compact, we may suppose that each $A_n$ intersects only finitely many elements of $\mathcal{V}$. Moreover, each $A_n$ is paracompact since it is compact Hausdorff. Pick a partition of unity on $A_1$ subordinate to the cover $\{ V \cap A_1 \mid V \in \mathcal{V}  \}$. By Theorem 1.5 in \cite{DyPU}, we can extend this to a partition of unity on $A_2$ which is subordinate to the cover $\{V \cap A_2 \mid V \in \mathcal{V} \}$. Inductively we obtain a partition of unity on the whole of $X$ subordinate to the cover $\mathcal{V}$, where the continuity follows from the fact that each $A_n$ is contained in the interior of $A_{n+1}$.
\end{proof}

\begin{Corollary}
 There is no proper hybrid large scale structure on the space of all countable ordinals $S_\Omega$ whose topology is the order topology.
\end{Corollary}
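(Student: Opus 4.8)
The plan is to argue by contradiction using Corollary \ref{ParacompactnessIssue}: if $S_\Omega$ carried a proper hybrid large scale structure compatible with its order topology, then by that corollary its order topology would have to be paracompact. Thus the entire proof reduces to the classical fact that the space of countable ordinals is \emph{not} paracompact, and I would recall a self-contained proof of this.

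First I would record the two standard topological properties of $S_\Omega = [0,\omega_1)$ that drive the argument. It is Hausdorff and locally compact, so it does satisfy the easy necessary conditions for being a proper hls-space; this is precisely why those conditions alone cannot rule it out, and paracompactness is the real obstruction. More importantly, $S_\Omega$ is \emph{countably compact} but \emph{not compact}. Countable compactness holds because any sequence $(\alpha_n)$ in $S_\Omega$ has supremum $\beta = \sup_n \alpha_n$, which is again a countable ordinal (a countable union of countable sets is countable); the sequence then lies in the compact successor ordinal $[0,\beta]$ and hence has a cluster point in $S_\Omega$. Non-compactness follows from the open cover $\{[0,\alpha) \mid \alpha < \omega_1\}$, which has no finite subcover, since any finite subfamily covers only an initial segment $[0,\gamma)$ with $\gamma < \omega_1$.

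Next I would invoke the general principle that a countably compact paracompact space is compact. This is immediate: given the cover $\{[0,\alpha) \mid \alpha<\omega_1\}$, paracompactness would furnish a locally finite open refinement; but in a countably compact space every locally finite family of non-empty sets is finite (otherwise, selecting one point from each of infinitely many members produces an infinite set whose cluster point violates local finiteness). Hence the refinement would be finite and would yield a finite subcover, contradicting non-compactness. Consequently $S_\Omega$ with the order topology cannot be paracompact.

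Finally I would assemble the contradiction: if $S_\Omega$, equipped with the order topology, admitted a proper hybrid large scale structure, then by Corollary \ref{ParacompactnessIssue} its topology would be paracompact, contradicting the previous step, so no such structure exists. The only genuinely substantive point — and the crux of the matter — is the non-paracompactness of $S_\Omega$; everything else is a formal application of the preceding corollary.
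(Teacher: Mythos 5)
Your proposal is correct and follows the same route as the paper: both reduce the statement to Corollary \ref{ParacompactnessIssue} together with the non-paracompactness of $S_\Omega$. The only difference is that the paper simply cites this classical fact (from Munkres), whereas you supply a self-contained proof of it via countable compactness, which is a valid and standard argument.
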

\begin{proof}
$S_\Omega$  with the order topology is the basic example of a normal space that is not paracompact (see \cite{Mu}).
\end{proof}

We also have the following corollary of Lemma \ref{DecompositionLemma}. 

\begin{Corollary}
If $X$ is a proper hls-space and $\mathcal{U}$ is an open scale, then each $\mathcal{U}$-component admits a countable basis of bounded sets, that is, a countable set $\mathcal{B}$ of bounded sets such that every bounded set is contained in some element of $\mathcal{B}$. In particular, each $\mathcal{U}$-component of $X$ is $\sigma$-compact.
\end{Corollary}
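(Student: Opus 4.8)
The plan is to take the sequence $\{A_n\}_{n=1}^\infty$ produced by Lemma \ref{DecompositionLemma} for the given $\mathcal{U}$-component $A$ and to show that $\mathcal{B} = \{A_n \mid n \geq 1\}$ is the required countable basis. This family is obviously countable, and each $A_n$ is bounded by property (1) of that lemma, so the only substantive task is to verify that every bounded subset of $A$ sits inside some $A_n$, after which $\sigma$-compactness can be read off directly.

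First I would record the geometric consequence of property (2): since $A_n \subseteq \mathrm{int}(A_{n+1})$, the interiors $\mathrm{int}(A_n)$ form an \emph{increasing} open cover of $A$, because $A = \bigcup_n A_n \subseteq \bigcup_n \mathrm{int}(A_{n+1}) \subseteq A$. I would also note that each $A_n$ lies in $A$: the component $A$ is closed (by the preceding Proposition it is open-closed), and $A_n$ is the closure of a set $B_n$ constructed inside $A$, so $A_n = \mathsf{cl}(B_n) \subseteq A$.

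Next, given a bounded set $B \subseteq A$, I would invoke properness: in a proper hls-space boundedness coincides with precompactness, so $\overline{B}$ is compact, and $\overline{B} \subseteq A$ since $A$ is closed. The increasing open cover $\{\mathrm{int}(A_n)\}$ of the compact set $\overline{B}$ then admits a finite subcover, which by monotonicity collapses to a single member $\mathrm{int}(A_N)$; hence $B \subseteq \overline{B} \subseteq \mathrm{int}(A_N) \subseteq A_N$, as desired.

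Finally, $\sigma$-compactness is immediate: each $A_n$ is closed and bounded, hence compact by properness, so $A = \bigcup_{n} A_n$ exhibits the component as a countable union of compact sets. I expect the only point genuinely requiring care to be the interpretation of ``every bounded set'': since finite sets are bounded in the coarsely connected space $X$ and may straddle distinct $\mathcal{U}$-components, the statement must refer to bounded subsets of the fixed component $A$. Once this is pinned down, properness does all the real work, converting the large scale hypothesis (boundedness) into the topological compactness that drives the covering argument; there is no deeper obstacle beyond assembling these pieces.
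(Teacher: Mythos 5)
Your proof is correct and follows exactly the route the paper intends: the paper states this as an immediate corollary of Lemma \ref{DecompositionLemma}, and your argument simply fills in the details (the $A_n$ are compact since they are closed and bounded in a proper space, and the increasing interiors $\mathrm{int}(A_n)$ swallow the compact closure of any bounded subset of the component). Your remark that ``every bounded set'' must mean bounded subsets of the fixed component is also the right reading, since in a coarsely connected space finite sets straddling two components are bounded.
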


If $f: X \rightarrow Y$ is a map from an ls-space $X$ to an ls-space $Y$, we say that $f$ is \textbf{large-scale continuous} or \textbf{ls-continuous} if for every uniformly bounded family $\mathcal{U}$ in $X$, the family
$$
f(\mathcal{U}) = \{f(U) \mid U \in \mathcal{U} \}
$$
is uniformly bounded in $Y$. A map $f: X \rightarrow Y$ between hls-spaces is called \textbf{hls-continuous} if it is continuous with respect to the topologies and ls-continuous with respect to the ls-structures. Two ls-continuous maps $f,g: X \rightarrow Y$ are said to be \textbf{close} if the family $\{ \{ f(x), g(x)\} \mid x \in X\}$ is uniformly bounded.

Recall that an ls-continuous map $f: X \rightarrow Y$ between ls-spaces is called a \textbf{large scale equivalence} (or coarse equivalence) if there is an ls-continuous map $f'$ in the other direction such that $ff'$ and $f'f$ are both close to the identity, i.e.~such that both families
$$
\{ \{ff'(y), y\} \mid y \in Y \}, \ \{ \{f'f(x), x\} \mid x \in X \} 
$$
are uniformly bounded. It is easy to check that an ls-continuous map $f: X \rightarrow Y$ is a large scale equivalence if and only if both of the following hold:

\begin{itemize}
\item $f$ is \textbf{coarsely surjective}, i.e.~there is a uniformly bounded family $\mathcal{U}$ in $Y$ such that $Y \subseteq \st(f(X), \mathcal{U})$;
\item $f$ is a a \textbf{coarse embedding}, i.e.~ for every uniformly bounded family $\mathcal{U}$ in $Y$, $f^{-1}(\mathcal{U}) = \{f^{-1}(U) \mid U \in \mathcal{U} \}$ is uniformly bounded in $X$.
\end{itemize}

\begin{Proposition} \label{disc1}
 If $X$ is a hybrid large scale space, then it contains a topologically discrete subset $Y$ such that the inclusion $i:Y\to X$, with the induced ls-structure on $Y$, is a large scale equivalence. If $X$ is proper, then $Y$ can be chosen such that the bounded subsets of $Y$ are finite.
\end{Proposition}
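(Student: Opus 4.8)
The plan is to build $Y$ as a maximal \emph{separated net} for a fixed open scale, and then to exploit the fact that the inclusion of a subspace carrying the induced ls-structure is automatically a coarse embedding, so that the only genuine work is coarse surjectivity together with topological discreteness.

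First I would fix an open scale $\mathcal{U}$, which exists by the definition of an hls-space. Call a subset $S\subseteq X$ \emph{$\mathcal{U}$-separated} if no single element of $\mathcal{U}$ contains two distinct points of $S$. This is a condition on pairs of points, so the union of a chain of $\mathcal{U}$-separated sets is again $\mathcal{U}$-separated; Zorn's Lemma then yields a maximal $\mathcal{U}$-separated set $Y$. Topological discreteness of $Y$ is immediate: for $y\in Y$ choose an open $U\in\mathcal{U}$ with $y\in U$; then $U\cap Y=\{y\}$ by separation, so $\{y\}$ is open in the subspace $Y$.

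Next I would check the three conditions making $i:Y\to X$ a large scale equivalence, where $Y$ carries the induced ls-structure (a family of subsets of $Y$ is uniformly bounded in $Y$ exactly when it is uniformly bounded in $X$). ls-continuity and the coarse embedding property are forced by this definition together with axiom (1): for any scale $\mathcal{V}$ of $X$, the family $i^{-1}(\mathcal{V})=\{V\cap Y\mid V\in\mathcal{V}\}$ refines $\mathcal{V}$ and hence is itself a scale. For coarse surjectivity I would invoke maximality: given $x\in X$, either $x\in Y$ or $Y\cup\{x\}$ is not $\mathcal{U}$-separated, and in either case there are $U\in\mathcal{U}$ and $y\in Y$ with $x,y\in U$; thus $x\in\st(Y,\mathcal{U})$, so $X=\st(Y,\mathcal{U})$ and $\mathcal{U}$ itself witnesses coarse surjectivity. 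By the characterization of large scale equivalences recalled in the excerpt, $i$ is therefore a large scale equivalence.

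Finally, for the proper case I would keep the same $Y$ and verify directly that its bounded subsets are finite. A subset $B\subseteq Y$ is bounded in $Y$ iff it is bounded in $X$, hence precompact by properness, so $\mathsf{cl}(B)$ is compact. Since $\mathcal{U}$ is an open cover, finitely many $U_1,\dots,U_k\in\mathcal{U}$ cover $\mathsf{cl}(B)$, and each $U_i$ meets $Y$ in at most one point, so $B\subseteq\bigcup_i(U_i\cap Y)$ has at most $k$ elements. The only point requiring care is the realization that a single maximal-net construction delivers everything at once: coarse density is precisely what maximality buys, the coarse embedding half is automatic for the induced structure, and the proper refinement falls out of compactness without any change to $Y$. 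I expect no serious obstacle beyond assembling these observations in the right order.
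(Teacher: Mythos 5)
Your proposal is correct and takes essentially the same approach as the paper: fix an open scale $\mathcal{U}$, take a maximal subset $Y$ meeting each element of $\mathcal{U}$ in at most one point, get coarse surjectivity from maximality and the coarse embedding property automatically from the induced ls-structure. You additionally spell out details the paper's two-sentence proof leaves implicit (the Zorn's Lemma step, topological discreteness, and the compactness argument showing bounded subsets of $Y$ are finite in the proper case), all of which are accurate.
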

\begin{proof}
Pick an open scale $\mathcal{U}$ of $X$. Let $Y$ be a maximal subset of $X$ with respect to the following property: $Y\cap U$ contains at most one point for all $U\in \mathcal{U}$. The inclusion $i: Y \rightarrow X$ is clearly coarsely surjective, and it is a coarse embedding since the ls-structure on $Y$ is induced by $X$.
\end{proof}

Proposition \ref{disc1} is well known in the case where $X$ is a metric space. Indeed, the discrete subset $Y$ can be realised as a maximal $1$-separated subset of $X$, that is, a maximal subset $Y$ with respect to the property that any two points of $Y$ are at least distance $1$ apart (see for example Section 1 of \cite{Gromov93}).

\section{Neighbourhood operators}
In order to make clear the connection between the results for (hybrid) large scale spaces contained in this paper and the classical topological results for topological spaces, we prove some results in the context of a set equipped with a neighbourhood operator satisfying certain axioms. By a \textbf{neighbourhood operator} on a set $X$ we mean a binary relation $\prec$ on the power set $\mathcal{P}(X)$ of $X$ such that $A \prec B \implies A \subseteq B$. If $A \prec B$, we say that $B$ is a neighbourhood of $A$ with respect to $\prec$. Neighbourhood operators appear in many places in the literature: see for example \cite{Czaszar} for applications to topology, or \cite{HolgateSlapal} for a more categorical approach. For our purposes, we will be interested in neighbourhood operators $\prec$ on a set $X$ satisfying the following conditions:

\begin{itemize}
\item[$\mathsf{(N0)}$] $A \prec X$ for all $A \subseteq X$.
\item[$\mathsf{(N1)}$] if $A \prec B$ then $X \setminus B \prec X \setminus A$.
\item[$\mathsf{(N2)}$] if $A \prec B \subseteq C$, then $A \prec C$.
\item[$\mathsf{(N3)}$] if $A \prec N$ and $A' \prec N'$ then $A \cup A' \prec N \cup N'$.
\end{itemize}
It is easy to see that, together, axioms $\mathsf{(N0)}-\mathsf{(N3)}$ imply:

\begin{itemize}
\item[$\mathsf{(N0')}$] $\varnothing \prec A$ for all $A \subseteq X$.
\item[$\mathsf{(N2')}$] if $A \subseteq B \prec C$ then $A \prec C$.
\item[$\mathsf{(N3')}$] if $A \prec N$ and $A' \prec N'$ then $A \cap A' \prec N \cap N'$.
\end{itemize}
We now introduce some examples of neighbourhood operators, the first three of which are the most important for our purposes.

\begin{itemize}
\item the \textbf{topological neighbourhood operator} on a topological space $X$: define $A \prec B$ if and only if $B$ contains an open set containing $\mathsf{cl}(A)$.
\item the \textbf{coarse neighbourhood operator} on an ls-space $X$: define $A \prec B$ if and only if $B$ is a \textbf{coarse neighbourhood} of $A$, that is, $A \subseteq B$ and for every uniformly bounded cover $\mathcal{U}$ of $X$, $\st(A, \mathcal{U})$ is contained in $B \cup K$ for some weakly bounded set $K$.
\item the \textbf{hybrid neighbourhood operator} on an hls-space $X$: define $A \prec B$ if and only if $B$ is a neighbourhood of $A$ with respect to the topological neighbourhood operator and the coarse neighbourhood operator on $X$. 
\item the \textbf{uniform neighbourhood operator} on a uniform space $X$ (see for example \cite{Isbell}): define $A \prec B$ if there is a uniform cover $\mathcal{U}$ such that such that $\st(A, \mathcal{U}) \subseteq B$. 
\end{itemize}

If $B$ is a neighbourhood of $A$ with respect to the topological neighbourhood operator, then we say that $B$ is a topological neighbourhood of $A$, and similarly for the coarse and hybrid neighbourhood operators. For proper metric spaces the notion of coarse neighbourhood is closely related to the notion of asymptotic neighbourhood in \cite{BellDranAsym}. Indeed, a coarse neighbourhood of a subset $A$ of a proper metric space is nothing but an asymptotic neighbourhood of $A$ which contains $A$. Note that what we in this paper call the topological neighbourhood operator does not capture the neighbourhood relation in the usual sense (that is, where $B$ is a neighbourhood of $A$ if and only if $B$ contains an open set which contains $A$). Indeed, the usual neighbourhood relation does not satisfy $\mathsf{(N1)}$, while one can check that the topological neighbourhood operator above does. 

\begin{Observation}
Conditions $\mathsf{(N0)}$-$\mathsf{(N3)}$ are satisfied by all four examples given above.
\end{Observation}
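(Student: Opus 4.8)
The plan is to check each of the four axioms for the three operators, but first I would cut the work by a bookkeeping observation: the hybrid neighbourhood operator is \emph{by definition} the conjunction of the topological and coarse ones, i.e. $A \prec_{\mathrm{hyb}} B$ exactly when $A \prec_{\mathrm{top}} B$ and $A \prec_{\mathrm{coarse}} B$. Each of $\mathsf{(N0)}$--$\mathsf{(N3)}$ is an implication whose conclusion is a single instance of the relation $\prec$, so if the hypotheses hold for $\prec_{\mathrm{hyb}}$ they hold for each constituent, each constituent yields its own conclusion, and the two conclusions combine back into the hybrid conclusion. Hence it suffices to verify the axioms for the topological and coarse operators separately.

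For the topological operator, recall that $A \prec_{\mathrm{top}} B$ means there is an open $O$ with $\mathsf{cl}(A) \subseteq O \subseteq B$. Axiom $\mathsf{(N0)}$ is witnessed by $O = X$; $\mathsf{(N2)}$ is immediate, since enlarging $B$ to $C \supseteq B$ keeps the same witness; and $\mathsf{(N3)}$ follows from $\mathsf{cl}(A \cup A') = \mathsf{cl}(A) \cup \mathsf{cl}(A')$ together with the fact that $O \cup O'$ is open, so it witnesses $A \cup A' \prec_{\mathrm{top}} N \cup N'$. For $\mathsf{(N1)}$, given $\mathsf{cl}(A) \subseteq O \subseteq B$ with $O$ open, the closed set $X \setminus O$ satisfies $\mathsf{cl}(X\setminus B) \subseteq X \setminus O \subseteq X \setminus \mathsf{cl}(A)$, and $X \setminus \mathsf{cl}(A)$ is an open set lying inside $X \setminus A$; thus $X \setminus \mathsf{cl}(A)$ witnesses $X \setminus B \prec_{\mathrm{top}} X \setminus A$.

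For the coarse operator, $\mathsf{(N0)}$, $\mathsf{(N2)}$ and $\mathsf{(N3)}$ are again routine: $\mathsf{(N0)}$ uses the weakly bounded set $K = \varnothing$; $\mathsf{(N2)}$ replaces $B$ by $C \supseteq B$ in the containment $\st(A,\mathcal U) \subseteq B \cup K$; and $\mathsf{(N3)}$ uses $\st(A \cup A', \mathcal U) = \st(A,\mathcal U) \cup \st(A', \mathcal U)$, so that $K \cup K'$ serves as the exceptional set. Here one must note that a finite union of weakly bounded sets is weakly bounded, which reduces, one coarse component $C$ at a time, to the fact that a finite union of bounded sets in a coarsely connected ls-space is bounded, each coarse component being coarsely connected.

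The main obstacle, and the only step needing a genuine idea, is $\mathsf{(N1)}$ for the coarse operator: assuming $A \prec_{\mathrm{coarse}} B$, I must produce, for each uniformly bounded cover $\mathcal V$, a weakly bounded set absorbing $\st(X\setminus B,\mathcal V) \setminus (X\setminus A) = \st(X\setminus B,\mathcal V) \cap A$. I would run a star-duality argument. If $a \in A$ lies in some $V \in \mathcal V$ meeting $X\setminus B$, pick $b \in V \setminus B$; then $a$ and $b$ share the element $V$ and $a \in A$, so $b \in \st(A,\mathcal V)$, while $b \notin B$. Applying the hypothesis with $\mathcal U = \mathcal V$ gives $b \in \st(A,\mathcal V)\setminus B \subseteq K$ for the weakly bounded $K$, and since $a$ and $b$ share $V$ we get $a \in \st(K,\mathcal V)$. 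This shows $\st(X\setminus B,\mathcal V)\cap A \subseteq \st(K,\mathcal V)$. It then remains to check that $\st(K,\mathcal V)$ is weakly bounded whenever $K$ is: every element of $\mathcal V$ is bounded and hence lies in a single coarse component, so intersecting with a coarse component $C$ yields $\st(K,\mathcal V)\cap C \subseteq \st(K\cap C,\mathcal V)$, and the latter is bounded by condition (2) in the definition of an ls-structure applied to the bounded set $K \cap C$. Taking $K' = \st(K,\mathcal V)$ then gives $\st(X\setminus B,\mathcal V) \subseteq (X\setminus A)\cup K'$, which establishes $\mathsf{(N1)}$ and completes the verification.
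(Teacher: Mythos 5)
Your proposal is correct. The paper offers no proof of this Observation (it is left as a routine check), and your verification supplies exactly what is intended: the hybrid case reduces formally to the conjunction of the topological and coarse cases, the topological axioms and the coarse $\mathsf{(N0)}$, $\mathsf{(N2)}$, $\mathsf{(N3)}$ are immediate, and the one genuinely nontrivial point --- axiom $\mathsf{(N1)}$ for the coarse operator --- is handled correctly by your star-duality argument, whose key ingredient is that $\st(K,\mathcal{V})$ is weakly bounded whenever $K$ is weakly bounded and $\mathcal{V}$ is a uniformly bounded cover.
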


\begin{Remark}
Clearly every neighbourhood operator $\prec$ on a set $X$ which satisfies $\mathsf{(N0)}$ and $\mathsf{(N3')}$ induces a topology on $X$ wherein a set $U \subseteq X$ is open if and only if for every $x \in U$, $\{x\} \prec U$. For a $T_1$ topological space and the topological neighbourhood operator, this recovers the original topology. For a uniform space and the uniform neighbourhood operator, this recovers the topology induced by the uniform structure in the usual sense. For a large scale space and the coarse neighbourhood operator, every superset of a singleton set is a coarse neighbourhood of that set, so the induced topology is the discrete topology.
\end{Remark}

\begin{Definition}
Let $X$ and $Y$ be sets equipped with neighbourhood operators $\prec_X$ and $\prec_Y$ respectively. A set map $f: X \rightarrow Y$ is called \textbf{neighbourhood continuous with respect to $\prec_X$ and $\prec_Y$} if $A \prec_Y B \implies f^{-1}(A) \prec_X f^{-1}(B)$ for any subsets $A$ and $B$ of $Y$.
\end{Definition}

We now show that neighbourhood continuity generalises both topological continuity and being slowly oscillating for maps into $[0,1]$. 

\begin{Proposition}\label{neighcont}
Let $X$ and $Y$ be topological spaces and let $f: X \to Y$ be a set map. If $f$ is topologically continuous, then it is neighbourhood continuous with respect to the topological neighbourhood operators on $X$ and $Y$. If $Y$ is a $T_1$-space then the converse also holds.
\end{Proposition}
\begin{proof}
It is easy to check that if $f$ is topologically continuous then it is also neighbourhood continuous. Suppose then that $f$ is neighbourhood continuous, $Y$ is a $T_1$ space and let $A$ be an open set with $f(x) \in A$. Since the point $f(x)$ is closed, we have $\{f(x)\} \prec A$. Thus we have $f^{-1}(f(x)) \prec f^{-1}(A)$, which gives us continuity at $x$. 
\end{proof}

For convenience, when we are referring to a map $f$ from a set $X$ equipped with a neighbourhood operator $\prec$ to a subset of $\mathbb{R}$, we say that $f$ is neighbourhood continuous to mean that it is neighbourhood continuous with respect to the neighbourhood operator $\prec$ and the topological neighbourhood operator on the codomain.

\begin{Lemma}\label{easyintervals}
Let $X$ be a set and $\prec$ a neighbourhood operator on $X$ satisfying $\mathsf{(N0)}-\mathsf{(N3)}$. A map $f: X \rightarrow [0,1]$ is neighbourhood continuous if and only if for every $a < b$ in $[0,1]$ we have $f^{-1}([0,a]) \prec f^{-1}([0, b))$.
\end{Lemma}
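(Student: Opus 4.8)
The plan is to treat the two implications separately, with essentially all of the work in the ``if'' direction; throughout I will use that $f^{-1}$ commutes with unions, intersections and complements, and that neighbourhood continuity to $[0,1]$ means neighbourhood continuity with respect to $\prec$ and the topological neighbourhood operator on $[0,1]$. The forward implication is immediate: assuming $f$ is neighbourhood continuous, fix $a < b$ in $[0,1]$; the set $[0,b)$ is open in $[0,1]$ and contains $\mathsf{cl}([0,a]) = [0,a]$, so $[0,a]$ is a topological-neighbourhood predecessor of $[0,b)$, and neighbourhood continuity yields $f^{-1}([0,a]) \prec f^{-1}([0,b))$ directly.

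For the converse, suppose $f^{-1}([0,a]) \prec f^{-1}([0,b))$ whenever $a < b$, and take arbitrary $A, B \subseteq [0,1]$ with $B$ a topological neighbourhood of $A$, so that there is an open $U$ with $\mathsf{cl}(A) \subseteq U \subseteq B$. Writing $C = \mathsf{cl}(A)$, it suffices by $\mathsf{(N2')}$ and $\mathsf{(N2)}$ to establish $f^{-1}(C) \prec f^{-1}(U)$, since then $f^{-1}(A) \subseteq f^{-1}(C) \prec f^{-1}(U) \subseteq f^{-1}(B)$. Thus the whole problem reduces to proving $f^{-1}(C) \prec f^{-1}(U)$ for a closed set $C$ contained in an open set $U$ of $[0,1]$.

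The engine of the argument is a neighbourhood relation for a closed interval sitting inside an open one. First I would apply $\mathsf{(N1)}$ to the hypothesis: taking complements in $f^{-1}([0,a]) \prec f^{-1}([0,b))$ gives $f^{-1}([b,1]) \prec f^{-1}((a,1])$ for all $a < b$. Given $0 \le r < p \le q < s \le 1$, I combine $f^{-1}([0,q]) \prec f^{-1}([0,s))$ with $f^{-1}([p,1]) \prec f^{-1}((r,1])$ via $\mathsf{(N3')}$, using $[0,q]\cap[p,1] = [p,q]$ and $[0,s)\cap(r,1] = (r,s)$, to obtain $f^{-1}([p,q]) \prec f^{-1}((r,s))$. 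The cases where the open interval abuts an endpoint (that is, $[0,s)$ or $(r,1]$) are handled by invoking only one of the two basic relations, and the case $U = [0,1]$ is covered directly by $\mathsf{(N0)}$. In this way, after applying $f^{-1}$, every closed subinterval compactly contained in a relatively open interval becomes a $\prec$-neighbour.

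Finally I would invoke compactness. Since $C$ is closed in $[0,1]$ it is compact, and for each $x \in C$ I can choose a closed interval $I_x$ and a relatively open interval $J_x$ with $x$ in the relative interior of $I_x$ and $I_x \subseteq J_x \subseteq U$, arranged so that the previous step applies. The relative interiors of the $I_x$ cover $C$, so finitely many $I_{x_1}, \dots, I_{x_n}$ already cover it. Applying the interval relation and iterating $\mathsf{(N3)}$ gives $f^{-1}\bigl(\bigcup_i I_{x_i}\bigr) \prec f^{-1}\bigl(\bigcup_i J_{x_i}\bigr)$; since $C \subseteq \bigcup_i I_{x_i}$ and $\bigcup_i J_{x_i} \subseteq U$, axioms $\mathsf{(N2')}$ and $\mathsf{(N2)}$ yield $f^{-1}(C) \prec f^{-1}(U)$, completing the proof. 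I expect the main obstacle to be the third step: correctly manufacturing the interval neighbourhood relation from the one-sided hypothesis, in particular recognising that $\mathsf{(N1)}$ supplies the ``upper'' intervals while $\mathsf{(N3')}$ lets one intersect to reach an arbitrary closed subinterval, together with the bookkeeping at the endpoints $0$ and $1$.
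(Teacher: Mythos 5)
Your proof is correct and follows essentially the same route as the paper's: both reduce via $\mathsf{(N2)}$/$\mathsf{(N2')}$ to a closed set inside an open set, use compactness of $[0,1]$ to pass to a finite union of closed intervals sitting inside open intervals, handle unions with $\mathsf{(N3)}$, and obtain the single-interval relation $f^{-1}([p,q]) \prec f^{-1}((r,s))$ from the one-sided hypothesis by combining $\mathsf{(N1)}$ with the intersection property $\mathsf{(N3')}$. The only difference is cosmetic: the paper manufactures the finite interval decomposition by an explicit $\varepsilon$-thickening argument, whereas you take a finite subcover of interval neighbourhoods, which is equally valid.
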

\begin{proof}
$(\Rightarrow)$ is obvious.

$(\Leftarrow)$ Suppose $A$ has neighbourhood $N$ in $[0,1]$ relative to the topological neighbourhood operator. We must show  that $f^{-1}(A) \prec f^{-1}(N)$. We may suppose that $N$ is open and $A$ is closed since the neighbourhood operator $\prec$ satisfies $\mathsf{(N2)}$ and $\mathsf{(N2')}$. Since $[0,1]$ is compact, there is an $\varepsilon > 0$ such that $B(A, \varepsilon) \cap [0,1] \subseteq N$ (indeed, the function $x \mapsto d(x, X \setminus N)$ achieves a minimum on $A$ which cannot be $0$). The connected components of $A' = \{x \in [0,1] \mid \exists_{a \in A}\ d(a, x) \leq \varepsilon/2\}$ have diameter at least $\varepsilon$, so $A'$ is a finite union of closed intervals. Moreover, $A'$ contains $A$ and is contained in $N' = B(A', \varepsilon/2) \cap [0,1]$, which is in turn contained in $N$. Using $\mathsf{(N2)}$ and $\mathsf{(N2')}$ again together with $\mathsf{(N3)}$, we may thus reduce to the case where $A = [a,b]$ and $N = (a', b') \cap [0,1]$ for $a' < a$ and $b < b'$, and we can choose $b' < 1$ if $b < 1$. If $0 < a' < b' < 1$, then noticing that $[a,b] = [0, b] \cap ([0,1] \setminus [0, a) )$ and $(a',b') = [0, b') \cap ([0,1] \setminus [0, a'])$ and using condition $\mathsf{(N2')}$ and $\mathsf{(N1)}$, we can reduce to the case of $A = [0, x]$ and $N = [0, y)$ for $x < y$. If $a = 0$ and $b < 1$, then we are already reduced to the desired case. Finally, if $A = [0,1]$, then by $\mathsf{(N0)}$, we have that $f^{-1}(A) = X \prec X = f^{-1}(N)$, so we can discard this case.
\end{proof}

\begin{Proposition}\label{neighslow}
Let $X$ be an ls-space and $f: X \rightarrow [0,1]$ a set map. Then $f$ is slowly oscillating if and only if $f$ is neighbourhood continuous with respect to the coarse neighbourhood operator on $X$ and the topological neighbourhood operator on $[0,1]$.
\end{Proposition}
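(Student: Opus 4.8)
The plan is to reduce the statement to a condition on preimages of initial intervals and then pass between the slowly oscillating inequality and the coarse-neighbourhood inclusion by subdividing $[0,1]$. Write $\prec$ for the coarse neighbourhood operator; by the Observation it satisfies $\mathsf{(N0)}$–$\mathsf{(N3)}$, so Lemma~\ref{easyintervals} says that $f$ is neighbourhood continuous if and only if $f^{-1}([0,a]) \prec f^{-1}([0,b))$ for all $a<b$ in $[0,1]$. Unwinding the definition of coarse neighbourhood (the inclusion of sets being automatic here), this means: for all $a<b$ and every uniformly bounded cover $\mathcal{U}$ there is a weakly bounded set $K$ with $\st(f^{-1}([0,a]),\mathcal{U}) \subseteq f^{-1}([0,b)) \cup K$. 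The whole proposition thus becomes a comparison between this family of inclusions and the slowly oscillating condition.

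Before the two directions I would record a stability lemma for weakly bounded sets, which is where the real work lies: if $K$ is weakly bounded and $\mathcal{U}$ is a uniformly bounded family then $\st(K,\mathcal{U})$ is weakly bounded, and a finite union of weakly bounded sets is weakly bounded. Both statements reduce, one coarse component at a time, to the fact that a finite union of bounded subsets of a single coarse component is bounded (using that each member of a uniformly bounded family is bounded and hence lies in one coarse component). That fact in turn follows from the base case that two bounded sets $B_1,B_2$ meeting in a point have bounded union, which I would prove by enlarging a scale $\mathcal{B}^2 \ni B_2$ to the covering scale $\mathcal{B}^2 \cup \{\{x\} : x \in X\}$ (still a scale, by axiom~(1)) and noting that $B_1 \cup B_2 \subseteq \st(B_1, \mathcal{B}^2 \cup \{\{x\} : x \in X\})$, which is an element of the scale $\st(\mathcal{B}^1, \mathcal{B}^2 \cup \{\{x\} : x \in X\})$ for any scale $\mathcal{B}^1 \ni B_1$. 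The same device of adjoining all singletons to a scale also lets me replace an arbitrary uniformly bounded family by a uniformly bounded cover, which I will need below.

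For $(\Rightarrow)$, fix $a<b$ and a uniformly bounded cover $\mathcal{U}$, and apply slow oscillation with $\varepsilon = b-a$ to obtain a weakly bounded $K$ such that $U\in\mathcal{U}$ and $U\cap K=\varnothing$ force $\diam f(U) < b-a$. Given $z \in \st(f^{-1}([0,a]),\mathcal{U})$, choose $U \in \mathcal{U}$ with $z \in U$ and $U$ meeting $f^{-1}([0,a])$: if $U \cap K = \varnothing$ then picking $w \in U$ with $f(w)\le a$ gives $f(z) < a + (b-a) = b$, and otherwise $z \in \st(K,\mathcal{U})$. Hence $\st(f^{-1}([0,a]),\mathcal{U}) \subseteq f^{-1}([0,b)) \cup \st(K,\mathcal{U})$ with $\st(K,\mathcal{U})$ weakly bounded by the stability lemma, which is exactly the required coarse-neighbourhood inclusion.

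For $(\Leftarrow)$, fix a uniformly bounded family $\mathcal{U}$ and $\varepsilon>0$, enlarge $\mathcal{U}$ to a uniformly bounded cover by adjoining singletons, and choose $N$ with $2/N < \varepsilon$. Applying the interval condition to each pair $(a,b)=(i/N,(i+1)/N)$ for $0\le i\le N-1$ yields weakly bounded sets $K_i$; set $K = \bigcup_{i=0}^{N-1} K_i$, weakly bounded by the stability lemma. For $U \in \mathcal{U}$ with $U \cap K = \varnothing$, let $j$ be the least index for which $U$ meets $f^{-1}([0,j/N])$. Minimality gives $\inf f(U) \ge (j-1)/N$ (and $\ge 0$ if $j=0$), while the inclusion for the pair $(j/N,(j+1)/N)$ together with $U \cap K_j = \varnothing$ gives $\sup f(U) \le (j+1)/N$ (the case $j=N$ being handled directly, since then $f(U)\subseteq((N-1)/N,1]$); thus $\diam f(U) \le 2/N < \varepsilon$, so $f$ is slowly oscillating. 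The main obstacle throughout is the stability lemma for weakly bounded sets, and in particular its base fact that bounded sets meeting in a point have bounded union; once that is in place, the $\varepsilon$-bookkeeping in both directions is routine, and the only other point to watch is the mismatch between the \emph{families} quantified over in the definition of slow oscillation and the \emph{covers} quantified over by the coarse neighbourhood operator, which the singleton trick resolves.
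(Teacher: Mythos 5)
Your proof is correct, and while it follows the paper's skeleton in reducing everything to the initial-interval condition of Lemma~\ref{easyintervals} (your forward direction is essentially identical to the paper's: apply slow oscillation with $\varepsilon = b-a$ and absorb the exceptional set into $\st(K,\mathcal{U})$), the converse direction takes a genuinely different route. The paper first reduces to the case where $X$ is coarsely connected (so weakly bounded sets become bounded) and then argues by contrapositive: if $f$ is not slowly oscillating, the union $Y$ of the bad elements of some scale is unbounded, a pigeonhole over a subdivision of $[0,1]$ into intervals $I_m$ of length $<\varepsilon/2$ produces an $m$ with $f^{-1}(I_m)\cap Y$ unbounded, and then $f^{-1}(I_{m-1}\cup I_m\cup I_{m+1})$ fails to be a coarse neighbourhood of $f^{-1}(I_m)$. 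You instead give a direct proof: subdivide into $N$ intervals, take the union $K=\bigcup K_i$ of the finitely many weakly bounded exceptional sets, and run a least-index argument to get $\diam f(U) \le 2/N$ off $K$. The price of your directness is the stability lemma for weakly bounded sets (stars and finite unions of weakly bounded sets are weakly bounded, resting on the singleton-adjunction trick and the fact that bounded sets meeting in a point have bounded union); the paper's coarsely-connected reduction avoids stating this, though its pigeonhole step ("otherwise $Y$ is a finite union of bounded sets") quietly relies on the same finite-union fact, and its passage from families to covers uses the same singleton trick you make explicit. Your version is thus somewhat longer but more self-contained, works with weakly bounded sets throughout rather than delegating to a componentwise reduction, and yields an explicit modulus; the paper's version is shorter at the cost of leaving these reductions implicit.
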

\begin{proof}
It is enough to consider the case when $X$ is coarsely connected. 

$(\Rightarrow)$ Let $a < b$, with $b-a = \varepsilon$. If $\mathcal{U}$ is a uniformly bounded cover, then there is a bounded set $K$ in $X$ such that $f(U)$ has diameter less than $\varepsilon/2$ for every $U$ in $\mathcal{U}$ not contained in $K$. Thus $\st(f^{-1}([0,a]), \mathcal{U})$ is contained in $f^{-1}([0, a+\varepsilon)) \cup K$, which gives the result by Lemma \ref{easyintervals}. 

$(\Leftarrow)$ Suppose that $f$ is not slowly oscillating. Then there is an $\varepsilon > 0$ and a uniformly bounded cover $\mathcal{U}$ of $X$ such that $Y = \bigcup \{ U \in \mathcal{U} \mid \mathsf{diam}(f(U)) > \varepsilon \}$ is unbounded. Divide $[0,1]$ into consecutive closed intervals $I_1, \ldots, I_k$ of length less than $\varepsilon/2$ with non-empty interior, and let $I_0 = I_{k+1} = \varnothing$ for convenience. Then there exists a $1 \leq m \leq k$ such that $f^{-1}(I_m) \cap Y$ is unbounded (otherwise $Y$ is a finite union of bounded sets). The subset $N = I_{m-1} \cup I_m \cup I_{m+1}$ is a topological neighbourhood of $I_m$, but $\st(f^{-1}(I_m), \mathcal{U}) \setminus f^{-1}(N)$ is not bounded, so $f^{-1}(N)$ is not a coarse neighbourhood of $f^{-1}(I_m)$.
\end{proof}

\begin{Proposition}
Let $X$ be an hls-space and $f: X \rightarrow [0,1]$ a set map. Then $f$ is continuous and slowly oscillating if and only if $f$ is neighbourhood continuous with respect to the hybrid neighbourhood operator on $X$ and the topological neighbourhood operator on $[0,1]$.
\end{Proposition}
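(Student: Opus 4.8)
The plan is to reduce this directly to the two preceding propositions by unwinding the definition of the hybrid neighbourhood operator. The observation that does all the work is that, by definition, $A \prec B$ with respect to the hybrid neighbourhood operator holds precisely when $B$ is simultaneously a topological neighbourhood of $A$ and a coarse neighbourhood of $A$; that is, the hybrid relation is the conjunction (intersection) of the topological and coarse relations.

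First I would record this conjunction explicitly. Writing $\prec_t$, $\prec_c$, $\prec_h$ for the topological, coarse, and hybrid neighbourhood operators on $X$, the statement $f^{-1}(A) \prec_h f^{-1}(B)$ is literally the assertion that $f^{-1}(A) \prec_t f^{-1}(B)$ and $f^{-1}(A) \prec_c f^{-1}(B)$. Consequently, neighbourhood continuity of $f$ with respect to $\prec_h$ and the topological operator on $[0,1]$ says exactly that for every pair $A,B \subseteq [0,1]$ with $B$ a topological neighbourhood of $A$, both implications $f^{-1}(A) \prec_t f^{-1}(B)$ and $f^{-1}(A) \prec_c f^{-1}(B)$ hold.

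Next I would carry out the one genuinely formal step: a universally quantified implication whose conclusion is a conjunction is equivalent to the conjunction of the two implications. Hence $f$ is neighbourhood continuous with respect to $\prec_h$ if and only if $f$ is neighbourhood continuous with respect to $\prec_t$ and $f$ is neighbourhood continuous with respect to $\prec_c$, each paired with the topological operator on the codomain. Applying Proposition \ref{neighcont} (with the $T_1$ space $[0,1]$ as codomain) identifies the first condition with topological continuity of $f$, and applying Proposition \ref{neighslow} identifies the second with $f$ being slowly oscillating. Combining these gives the equivalence claimed.

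I expect no real obstacle: the substance of the result is carried entirely by the two earlier propositions, and what remains is the remark that the hybrid operator is assembled as a conjunction together with the trivial distribution of a universal quantifier over that conjunction. The only point deserving a moment's care is to check that the quantification ranges over the identical set of pairs $(A,B)$ in both component statements, which it does, since both use the same topological neighbourhood operator on $[0,1]$; this ensures the split incurs no mismatch in domains.
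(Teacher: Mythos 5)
Your proposal is correct and is essentially the paper's own argument: the paper proves this proposition in one line by citing Propositions \ref{neighcont} and \ref{neighslow}, and your write-up simply makes explicit the (valid) bookkeeping behind that citation — the hybrid operator is the conjunction of the topological and coarse operators, and the universal quantifier in the definition of neighbourhood continuity distributes over that conjunction. No gap; the extra detail you supply is exactly what the paper leaves implicit.
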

\begin{proof}
This follows from Proposition \ref{neighcont} and \ref{neighslow} above.
\end{proof}

Now that we have motivated the notion of neighbourhood continuity, we are ready to prove some general results about neighbourhood operators. Before we do, we introduce a ``normality'' condition on a neighbourhood operator $\prec$.

\begin{itemize}
\item[$\mathsf{(N4)}$] for every pair of subsets $A \prec C$, there is a subset $B$ with $A \prec B \prec C$.
\end{itemize}

\begin{Lemma}\label{neighdense}
Let $X$ be a set equipped with a neighbourhood operator $\prec$ satisfying $\mathsf{(N0)}$--$\mathsf{(N3)}$ and let $\{A_s\}_{s\in S}$ be a family of subsets of $X$ indexed by a dense subset $S$ of $[0,1]$. If, for each $s < t\in S$, we have $A_s \prec A_t$, then the function $f:X\to [0,1]$ defined by
 \begin{align*}
 f(x) = & \inf\{t \mid x\in A_t\} 
 \end{align*}
 is neighbourhood continuous.
\end{Lemma}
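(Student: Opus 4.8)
The plan is to reduce the problem to Lemma \ref{easyintervals} and then exploit the monotonicity hypothesis $A_s \prec A_t$ to sandwich the relevant preimages. Since $\prec$ satisfies $\mathsf{(N0)}$--$\mathsf{(N3)}$, Lemma \ref{easyintervals} tells us it is enough to verify that $f^{-1}([0,a]) \prec f^{-1}([0,b))$ for every pair $a < b$ in $[0,1]$. So I would fix such a pair and work with these two preimages.

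First I would unwind the preimages in terms of the infimum defining $f$ (adopting the convention $\inf\varnothing = 1$, so that $f$ indeed takes values in $[0,1]$). Directly from the definition, $f(x) \leq a$ holds precisely when $\inf\{t \in S \mid x \in A_t\} \leq a$, while $f(x) < b$ holds precisely when there is some $t \in S$ with $t < b$ and $x \in A_t$. Thus $f^{-1}([0,a]) = \{x \mid f(x) \leq a\}$ and $f^{-1}([0,b)) = \{x \mid f(x) < b\}$.

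Next, using that $S$ is dense in $[0,1]$, I would choose two parameters $a < s < s' < b$ with $s, s' \in S$ (density first supplies $s \in S \cap (a,b)$, and then $s' \in S \cap (s,b)$). The key claim is the chain
\[
f^{-1}([0,a]) \subseteq A_s \prec A_{s'} \subseteq f^{-1}([0,b)).
\]
The middle relation is exactly the hypothesis, since $s < s'$. For the left inclusion, if $f(x) \leq a < s$ then $\inf\{t \in S \mid x \in A_t\}$ is strictly below $s$, so there is $t \in S$ with $t < s$ and $x \in A_t$; the hypothesis gives $A_t \prec A_s$, hence $A_t \subseteq A_s$ (as $\prec$ implies $\subseteq$), so $x \in A_s$. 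For the right inclusion, if $x \in A_{s'}$ then $s'$ lies in $\{t \in S \mid x \in A_t\}$, whence $f(x) \leq s' < b$, i.e.\ $x \in f^{-1}([0,b))$.

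Finally I would splice the chain together using the derived axioms: applying $\mathsf{(N2')}$ to $f^{-1}([0,a]) \subseteq A_s \prec A_{s'}$ gives $f^{-1}([0,a]) \prec A_{s'}$, and then applying $\mathsf{(N2)}$ to $f^{-1}([0,a]) \prec A_{s'} \subseteq f^{-1}([0,b))$ yields $f^{-1}([0,a]) \prec f^{-1}([0,b))$, as required. The only genuinely delicate point is the bookkeeping around the infimum: one must ensure that the strict inequality $a < s$ produces an honest witness $t \in S$ with $x \in A_t$, and one must notice that a single interpolating point does not suffice, since the two distinct points $s < s'$ are precisely what allows the hypothesis' $\prec$ to be inserted between the two inclusions. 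Everything else is a routine application of the neighbourhood axioms.
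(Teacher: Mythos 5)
Your proposal is correct and is essentially the paper's own proof: both reduce via Lemma \ref{easyintervals} to showing $f^{-1}([0,a]) \prec f^{-1}([0,b))$ and then use density to pick $s, s' \in S$ with $a < s < s' < b$, yielding the chain $f^{-1}([0,a]) \subseteq A_s \prec A_{s'} \subseteq f^{-1}([0,b))$. The extra bookkeeping you supply (the $\inf\varnothing = 1$ convention, the witness $t < s$, and the explicit use of $\mathsf{(N2)}$ and $\mathsf{(N2')}$) is exactly the detail the paper leaves implicit.
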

\begin{proof}
Let $[0, a] \subseteq [0, b)$ be subsets of $[0,1]$. Pick $s, s' \in S$ such that $a < s < s'< b$. Then
$$
f^{-1}([0,a]) \subseteq A_s \prec A_{s'} \subseteq f^{-1}([0, b)).
$$
Thus by Lemma \ref{easyintervals} we obtain the result.
\end{proof}

\begin{Theorem}[Urysohn's Lemma for neighbourhood operators] \label{neighurysohn}
Let $X$ be a set and $\prec$ a neighbourhood operator satisfying $\mathsf{(N0)}$--$\mathsf{(N3)}$. Then the following are equivalent:
\begin{itemize}
\item[(1)] $\prec$ satisfies $\mathsf{(N4)}$,
\item[(2)] for any subsets $A$ and $B$ of $X$ such that $A \prec X \setminus B$, there is a neighbourhood continuous function $f: X \rightarrow [0,1]$ such that $f(A) \subseteq \{0\}$ and $f(B) \subseteq \{1\}$.
\end{itemize} 
\end{Theorem}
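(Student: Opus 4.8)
The plan is to follow the classical proof of Urysohn's Lemma, with topological containment replaced throughout by the abstract relation $\prec$ and the final continuity verification delegated to Lemma~\ref{neighdense}. The implication $(2)\Rightarrow(1)$ is the easy direction, and I would dispatch it first; the substance lies in $(1)\Rightarrow(2)$.

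For $(2)\Rightarrow(1)$, suppose $A\prec C$ and set $B=X\setminus C$, so that $A\prec X\setminus B$ and $(2)$ supplies a neighbourhood continuous $f\colon X\to[0,1]$ with $f(A)\subseteq\{0\}$ and $f(B)\subseteq\{1\}$. I would take $D=f^{-1}([0,\tfrac12])$ as the interpolating set and check $A\prec D\prec C$. Since $[0,0]\prec[0,\tfrac12)$ in $[0,1]$, neighbourhood continuity gives $f^{-1}(\{0\})\prec f^{-1}([0,\tfrac12))$; then $A\subseteq f^{-1}(\{0\})$ together with $f^{-1}([0,\tfrac12))\subseteq D$ yields $A\prec D$ via $\mathsf{(N2')}$ and $\mathsf{(N2)}$. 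Similarly $[0,\tfrac12]\prec[0,1)$ gives $D\prec f^{-1}([0,1))$, and since $f(B)\subseteq\{1\}$ forces $f^{-1}([0,1))\subseteq X\setminus B=C$, one more application of $\mathsf{(N2)}$ gives $D\prec C$. This establishes $\mathsf{(N4)}$.

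For $(1)\Rightarrow(2)$, let $S\subseteq[0,1]$ be the set of dyadic rationals and suppose $A\prec X\setminus B$. I would build a family $\{A_s\}_{s\in S}$ with $A_0=A$, $A_1=X\setminus B$, and $A_s\prec A_t$ whenever $s<t$, then set $f(x)=\inf\{s\in S\mid x\in A_s\}$ (with the convention $\inf\varnothing=1$) and invoke Lemma~\ref{neighdense}. The sets are defined by induction on the dyadic level: starting from $A_0\prec A_1$ (the hypothesis), and given the sets at level $n$ with $A_{k/2^n}\prec A_{(k+1)/2^n}$ for consecutive indices, I use $\mathsf{(N4)}$ to insert $A_{(2k+1)/2^{n+1}}$ with $A_{k/2^n}\prec A_{(2k+1)/2^{n+1}}\prec A_{(k+1)/2^n}$. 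Once the family exists, the boundary conditions are immediate: every $x\in A$ lies in $A_0\subseteq A_s$ for all $s$, so $f(x)=0$, while every $x\in B$ lies in no $A_s$ (as $A_s\subseteq A_1=X\setminus B$ for all $s$), so $f(x)=1$.

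The main obstacle is that Lemma~\ref{neighdense} requires $A_s\prec A_t$ for \emph{all} pairs $s<t$, whereas the inductive construction only hands me the relation between indices that are consecutive at some level. Bridging this gap is the key step: given $s<t$ in $S$, I would fix a common level $n$ with $s=i/2^n$, $t=j/2^n$ and $i<j$, note that $A_s\prec A_{(i+1)/2^n}$ holds by construction, and observe that $A_{(i+1)/2^n}\subseteq A_t$ because $\prec$ implies $\subseteq$ and there is a finite chain of such containments running from $(i+1)/2^n$ up to $j/2^n$; then $\mathsf{(N2)}$ upgrades this to $A_s\prec A_t$. The only other point requiring care is the bookkeeping of the endpoint values of $f$ on $A$ and $B$, which is absorbed into the $\inf\varnothing=1$ convention noted above.
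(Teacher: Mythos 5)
Your proof is correct and follows essentially the same route as the paper: the easy direction takes $f^{-1}$ of a half-interval as the interpolating set, and the main direction builds a dyadic family from $A_0=A$, $A_1=X\setminus B$ via $\mathsf{(N4)}$ and invokes Lemma~\ref{neighdense}. Your bridging step (upgrading consecutive-level relations $A_s\prec A_t$ to all pairs $s<t$ via $\mathsf{(N2)}$ and the containment chain) is a detail the paper leaves implicit, and you have filled it in correctly.
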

\begin{proof}
(1) $\implies$ (2): By Lemma \ref{neighdense}, it is enough to produce a family of subsets $A_s$ indexed by a dense subset $S$ of $[0,1]$ such that $A_s \prec A_t$ whenever $s < t$. Using $\mathsf{(N4)}$ we can define such subsets indexed by the dyadic fractions, starting with $A_0 = A$ and $A_1 = X \setminus B$.

(2) $\implies$ (1): Given $A \prec N$, construct a neighbourhood continuous map $f$ taking $A$ to $0$ and $X \setminus N$ to $1$. Then $f^{-1}([0, 1/2))$ is the required intermediate neighbourhood.
\end{proof}

Notice that the proof of Theorem \ref{neighurysohn} is a straightforward adaptation of the standard proof of Urysohn's Lemma from topology. We can recover the classical Urysohn's Lemma from Theorem \ref{neighurysohn} above.

\begin{Lemma}
Let $X$ be a topological space. Then $X$ is normal if and only if the topological neighbourhood operator on $X$ satisfies $\mathsf{(N4)}$.
\end{Lemma}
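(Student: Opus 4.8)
The plan is to prove both directions by unwinding the definition of the topological neighbourhood operator, under which $A \prec B$ holds precisely when $B$ contains an open set containing $\mathsf{cl}(A)$. The axiom $\mathsf{(N4)}$ asserts that whenever $A \prec C$ there is an intermediate set $B$ with $A \prec B \prec C$, so the task is to show that this interpolation property for the topological operator is equivalent to normality. Recall that a space is normal exactly when disjoint closed sets can be separated by open sets, or equivalently (the form most convenient here) when for every closed set $F$ contained in an open set $W$, there is an open set $V$ with $F \subseteq V \subseteq \mathsf{cl}(V) \subseteq W$.

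For the forward direction, I would assume $X$ is normal and take any $A \prec C$. By definition there is an open set $W$ with $\mathsf{cl}(A) \subseteq W \subseteq C$. Applying normality to the closed set $\mathsf{cl}(A)$ and the open set $W$, I obtain an open $V$ with $\mathsf{cl}(A) \subseteq V \subseteq \mathsf{cl}(V) \subseteq W$. I would then set $B = \mathsf{cl}(V)$ (or equivalently $B = V$) and verify directly that $A \prec B$ and $B \prec C$: for $A \prec B$, the open set $V$ contains $\mathsf{cl}(A)$ and lies in $B$; for $B \prec C$, I need an open set containing $\mathsf{cl}(B) = \mathsf{cl}(V)$ and contained in $C$, which $W$ provides since $\mathsf{cl}(V) \subseteq W \subseteq C$. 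This gives $\mathsf{(N4)}$.

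For the converse, I would assume the topological neighbourhood operator satisfies $\mathsf{(N4)}$ and deduce normality. Given disjoint closed sets $F_0$ and $F_1$, the set $C = X \setminus F_1$ is open and contains $\mathsf{cl}(F_0) = F_0$, so $F_0 \prec C$. Applying $\mathsf{(N4)}$ yields $B$ with $F_0 \prec B \prec C$; unwinding, there is an open $U$ with $F_0 \subseteq U \subseteq B$ and an open $W$ with $\mathsf{cl}(B) \subseteq W \subseteq C$. The sets $U$ and $X \setminus \mathsf{cl}(B)$ are then disjoint open sets separating $F_0$ and $F_1$, since $F_1 = X \setminus C \subseteq X \setminus W \subseteq X \setminus \mathsf{cl}(B)$ and $U \subseteq B \subseteq \mathsf{cl}(B)$ is disjoint from $X \setminus \mathsf{cl}(B)$.

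The main thing to be careful about is the bookkeeping around closures: because the topological neighbourhood operator is defined via $\mathsf{cl}(A)$ rather than $A$ itself, one must consistently pass to closures when verifying the $\prec$ relations, and confirm that the intermediate set chosen is simultaneously a neighbourhood of the smaller set and admits the larger set as a neighbourhood. I do not expect any deep obstacle here; the proof is essentially a translation between the interpolation form of $\mathsf{(N4)}$ and the standard ``shrinking'' characterization of normality, and the only real content is matching the definition of $\prec$ to the right formulation of normality.
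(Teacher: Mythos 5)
Your proof is correct in both directions: the forward direction correctly uses the standard ``shrinking'' characterization of normality (closed $F$ inside open $W$ admits open $V$ with $F \subseteq V \subseteq \mathsf{cl}(V) \subseteq W$) to produce the intermediate set, and the converse correctly extracts disjoint open separators $U$ and $X \setminus \mathsf{cl}(B)$ from the interpolated neighbourhood. The paper states this lemma with no proof at all, treating it as immediate, and your argument is exactly the routine translation the authors left implicit.
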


\begin{Corollary}[Urysohn's Lemma]
Let $X$ be a normal topological space. Then for any closed disjoint subsets $A$ and $B$ of $X$ there is a continuous map $f: X \rightarrow [0,1]$ such that $f(A) \subseteq \{0\}$ and $f(B) \subseteq \{1\}$. 
\end{Corollary}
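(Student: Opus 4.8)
The plan is to apply the abstract Urysohn's Lemma, Theorem \ref{neighurysohn}, to the topological neighbourhood operator $\prec$ on $X$, so that essentially all the real work has already been done and this corollary reduces to unwinding definitions. First I would note that $\prec$ satisfies $\mathsf{(N0)}$--$\mathsf{(N3)}$ by the Observation above, and that, since $X$ is normal, the preceding Lemma guarantees that $\prec$ also satisfies $\mathsf{(N4)}$. Thus condition (1) of Theorem \ref{neighurysohn} holds, and we may use its equivalent formulation (2).

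Next I would check that the closed disjoint pair $A, B$ fits the hypothesis of statement (2). Since $B$ is closed, $X \setminus B$ is open, and since $A$ and $B$ are disjoint we have $A \subseteq X \setminus B$; as $A$ is closed, $\mathsf{cl}(A) = A$, so $X \setminus B$ is an open set containing $\mathsf{cl}(A)$. By the definition of the topological neighbourhood operator this says precisely that $A \prec X \setminus B$. Feeding this into Theorem \ref{neighurysohn}(2) then produces a neighbourhood continuous function $f \colon X \to [0,1]$ with $f(A) \subseteq \{0\}$ and $f(B) \subseteq \{1\}$.

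It remains only to translate ``neighbourhood continuous'' back into ``continuous'' in the ordinary topological sense. Here $f$ is neighbourhood continuous with respect to $\prec$ and the topological neighbourhood operator on $[0,1]$; since $[0,1]$ is metric, hence $T_1$, Proposition \ref{neighcont} applies and shows that this is equivalent to $f$ being topologically continuous. This yields the desired continuous map, completing the argument.

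The proof is essentially bookkeeping, so I do not expect a genuine obstacle; the single point requiring care is matching the roles of $A$ and $B$ and the direction of the neighbourhood relation in Theorem \ref{neighurysohn}(2) correctly --- concretely, verifying $A \prec X \setminus B$ rather than some variant --- together with remembering that the passage from neighbourhood continuity back to honest continuity relies on the codomain $[0,1]$ being $T_1$.
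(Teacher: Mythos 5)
Your proof is correct and follows exactly the route the paper intends: the paper states this corollary without an explicit proof precisely because it is the combination of Theorem \ref{neighurysohn}, the preceding lemma identifying normality with $\mathsf{(N4)}$ for the topological neighbourhood operator, and Proposition \ref{neighcont} to convert neighbourhood continuity (using that $[0,1]$ is $T_1$) into ordinary continuity. Your bookkeeping, including the verification that disjointness and closedness of $A$ and $B$ give $A \prec X \setminus B$, is exactly the unwinding the paper leaves to the reader.
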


\section{Hybrid large scale Urysohn's Lemma}
In this section we apply the results of the previous section to prove results for hybrid large scale spaces. 

\begin{Definition}
Let $X$ be an ls-space and $A$, $B$ be subsets of $X$. We say that $A$ and $B$ are \textbf{coarsely separated} for every uniformly bounded family $\mathcal{U}$ in $X$, $\st(A, \mathcal{U}) \cap \st(B, \mathcal{U})$ is weakly bounded.  
\end{Definition}

Note that in the case of metric spaces, this is the same as saying that $A$ and $B$ \textbf{diverge} in the sense of \cite{Dranetal}. Clearly if $A$ and $B$ are disjoint subsets of an ls-space $X$, then $A$ and $B$ are coarsely separated if and only if $X \setminus B$ is a coarse neighbourhood of $A$. 

\begin{Definition}
Let $X$ be a hybrid large scale space.  We say that $X$ is \textbf{hybrid large scale normal} (or hls-normal) if for every closed subset $A$ and every hybrid neighbourhood $N$ of $A$, there is a closed subset $V$ of $X$ such that $V$ is a hybrid neighbourhood of $A$ and $N$ is a hybrid neighbourhood of $V$. We say that an ls-space is \textbf{ls-normal} if it is hybrid large scale normal when equipped with the discrete topology.
\end{Definition}

\begin{Lemma}\label{neighN4hyb}
An hls-space is hls-normal if and only if the hybrid neighbourhood operator satisfies $\mathsf{(N4)}$. 
\end{Lemma}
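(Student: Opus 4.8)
The plan is to prove that an hls-space is hls-normal if and only if its hybrid neighbourhood operator $\prec$ satisfies $\mathsf{(N4)}$, and the key observation is that this should be essentially a matter of unwinding definitions, since both the definition of hls-normality and axiom $\mathsf{(N4)}$ are statements about the existence of intermediate neighbourhoods. First I would recall what each side says. Axiom $\mathsf{(N4)}$ for the hybrid neighbourhood operator states: for every pair $A \prec C$ (i.e.\ $C$ a hybrid neighbourhood of $A$), there is a set $B$ with $A \prec B \prec C$. Meanwhile, hls-normality states: for every closed subset $A$ and every hybrid neighbourhood $N$ of $A$, there is a \emph{closed} subset $V$ with $A \prec V \prec N$. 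So the definition of hls-normality is precisely $\mathsf{(N4)}$ restricted to the case where $A$ is closed and the intermediate set $V$ is required to be closed.

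The direction hls-normal $\Rightarrow$ $\mathsf{(N4)}$ is therefore the substantive one, since I must remove the restriction that $A$ be closed and produce an intermediate neighbourhood for an \emph{arbitrary} pair $A \prec C$. The natural move is to replace $A$ by its closure. Since $\prec$ is the hybrid operator, $A \prec C$ means $C$ is both a topological and a coarse neighbourhood of $A$; in particular $C$ contains an open set containing $\mathsf{cl}(A)$, so $\mathsf{cl}(A) \prec C$ as well (one checks that $\mathsf{cl}(A)$ inherits the coarse-neighbourhood relation too, using that $\st(\mathsf{cl}(A),\mathcal{U})$ and $\st(A,\mathcal{U})$ differ only by a weakly bounded set via Lemma~\ref{FirstLemma} applied to an open scale, together with $\mathsf{(N2')}$). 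Now $\mathsf{cl}(A)$ is closed, so hls-normality applies to the pair $\mathsf{cl}(A) \prec C$ and yields a closed $V$ with $\mathsf{cl}(A) \prec V \prec C$. Finally, since $A \subseteq \mathsf{cl}(A) \prec V$, axiom $\mathsf{(N2')}$ gives $A \prec V$, so $V$ is the required intermediate neighbourhood and $\mathsf{(N4)}$ holds.

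The converse, $\mathsf{(N4)} \Rightarrow$ hls-normal, requires that the intermediate set $B$ supplied by $\mathsf{(N4)}$ be taken (or replaced by) a \emph{closed} set $V$. Given a closed $A$ and hybrid neighbourhood $N$, apply $\mathsf{(N4)}$ twice: first to get $A \prec B \prec N$, then to get $B \prec B' \prec N$. Since $B'$ is a hybrid (hence topological) neighbourhood of $B$, we have $\mathsf{cl}(B) \prec B'$ by definition of the topological neighbourhood operator, and hence $A \prec B \subseteq \mathsf{cl}(B) \prec B' \prec N$. Setting $V = \mathsf{cl}(B)$, which is closed, gives $A \prec V \prec N$ using $\mathsf{(N2)}$ and $\mathsf{(N2')}$, which is exactly hls-normality.

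The main obstacle I anticipate is the first direction, specifically the claim that $A \prec C$ implies $\mathsf{cl}(A) \prec C$ for the hybrid operator. The topological half is immediate from the definition, but the coarse half needs the hls compatibility: one must verify that passing from $A$ to $\mathsf{cl}(A)$ only enlarges stars by a weakly bounded amount. This is where Lemma~\ref{FirstLemma} (giving $\mathsf{cl}(A) \subseteq \st(A,\mathcal{U})$ for an open scale $\mathcal{U}$) does the real work, and care is needed to confirm that the extra points picked up lie in a weakly bounded set so that the coarse-neighbourhood condition is preserved. Everything else is a routine application of the axioms $\mathsf{(N0)}$--$\mathsf{(N3)}$ and their consequences.
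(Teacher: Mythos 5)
Your proof is correct, and the forward direction (hls-normal $\Rightarrow$ $\mathsf{(N4)}$) is essentially the paper's argument: replace $A$ by $\mathsf{cl}(A)$, use Lemma \ref{FirstLemma} to see that the hybrid neighbourhood relation survives the passage to the closure, apply normality, and descend back to $A$ via $\mathsf{(N2')}$. Where you genuinely diverge is the converse. The paper applies $\mathsf{(N4)}$ once to get $A \prec V \prec N$ and then takes $\mathsf{cl}(V)$, which forces it to re-run the coarse compatibility argument (``by similar arguments to the previous direction'') to check that $N$ is still a coarse neighbourhood of $\mathsf{cl}(V)$. You instead apply $\mathsf{(N4)}$ twice to get $A \prec B \prec B' \prec N$, observe that $\mathsf{cl}(B) \subseteq B'$ because $B'$ is in particular a topological neighbourhood of $B$, and then conclude $\mathsf{cl}(B) \prec N$ purely formally from $\mathsf{(N2')}$; this squeezes the closed set $V = \mathsf{cl}(B)$ between $A$ and $N$ without ever touching the large scale structure in this direction, which is a cleaner and slightly more axiomatic route. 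One caveat on the forward direction: your parenthetical claim that $\st(\mathsf{cl}(A),\mathcal{U})$ and $\st(A,\mathcal{U})$ ``differ only by a weakly bounded set'' is not literally what is needed or true in general; the correct statement is that for any scale $\mathcal{V}$ one has $\st(\mathsf{cl}(A),\mathcal{V}) \subseteq \st(A, \st(\mathcal{U},\mathcal{V}))$ with $\mathcal{U}$ an open scale, and then the coarse neighbourhood property of $C$ relative to $A$, applied at the coarser scale $\st(\mathcal{U},\mathcal{V})$, yields $\st(\mathsf{cl}(A),\mathcal{V}) \subseteq C \cup K$ with $K$ weakly bounded. This is an easy repair (and the paper glosses over the same point just as briskly), but as written the justification is imprecise.
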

\begin{proof}
$(\Rightarrow)$ Suppose $A$ has a hybrid neighbourhood $N$. In particular then, $\mathsf{cl}(A)$ is contained in the interior of $N$. But since $\mathsf{cl}(A) \subseteq \st(A, \mathcal{U})$ for any open scale $\mathcal{U}$, $N$ is a coarse neighbourhood of $\mathsf{cl}(A)$. Thus $N$ is a hybrid neighbourhood of $\mathsf{cl}(A)$, and we can obtain an intermediate hybrid neighbourhood as required. 

$(\Leftarrow)$ Consider a closed subset $A$ and a hybrid neighbourhood $N$ of $A$. Condition $\mathsf{(N4)}$ gives us an intermediate hybrid neighbourhood $V$. Taking $\mathsf{cl}(V)$, by similar arguments to the previous direction, produces the required closed intermediate hybrid neighbourhood.
\end{proof}

Combining Lemma \ref{neighN4hyb} and Lemma \ref{neighurysohn} we obtain:

\begin{Corollary}[Urysohn's Lemma for hybrid large scale spaces]\label{UrysohnLemmaInHLS}
Let $X$ be an hls-space. Then the following are equivalent:
\begin{itemize}
\item[(1)] $X$ is hls-normal,
\item[(2)] if $A$ has hybrid neighbourhood $N$, then there is a continuous slowly oscillating map $f: X \rightarrow [0,1]$ such that $f(A) \subseteq \{0\}$ and $f(X \setminus N) \subseteq \{1\}$,
\item[(3)] for any closed coarsely separated disjoint subsets $A$ and $B$ of $X$ there is a continuous slowly oscillating map $f: X \rightarrow [0,1]$ such that $f(A) \subseteq \{0\}$ and $f(B) \subseteq \{1\}$.
\end{itemize} 
\end{Corollary}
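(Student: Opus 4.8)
The plan is to read $(1)\Leftrightarrow(2)$ as a direct translation of the abstract Urysohn Lemma (Theorem~\ref{neighurysohn}) into the hybrid setting, and then to close the loop by proving $(2)\Leftrightarrow(3)$, where the only genuinely geometric ingredient is that coarse separation is insensitive to passing to closures. For $(1)\Leftrightarrow(2)$: the hybrid neighbourhood operator $\prec$ satisfies $\mathsf{(N0)}$--$\mathsf{(N3)}$ (the Observation), so Theorem~\ref{neighurysohn} applies to it. By Lemma~\ref{neighN4hyb}, $X$ is hls-normal iff $\prec$ satisfies $\mathsf{(N4)}$, and by Theorem~\ref{neighurysohn} this holds iff for every $A,B$ with $A\prec X\setminus B$ there is a $\prec$-neighbourhood continuous $f:X\to[0,1]$ with $f(A)\subseteq\{0\}$ and $f(B)\subseteq\{1\}$. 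Writing $N=X\setminus B$, the hypothesis $A\prec X\setminus B$ says exactly that $N$ is a hybrid neighbourhood of $A$, while $f(B)=f(X\setminus N)$. Finally, by Propositions~\ref{neighcont} and~\ref{neighslow} (combined just before this section), a map $X\to[0,1]$ is neighbourhood continuous for the hybrid operator iff it is continuous and slowly oscillating. Substituting these identifications gives precisely statement $(2)$, so $(1)\Leftrightarrow(2)$.

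For the two reductions, first $(2)\Rightarrow(3)$: given closed disjoint coarsely separated $A,B$, put $N=X\setminus B$. Since $B$ is closed, $N$ is open and contains $A=\mathsf{cl}(A)$, so $N$ is a topological neighbourhood of $A$; since $A,B$ are disjoint and coarsely separated, $N=X\setminus B$ is a coarse neighbourhood of $A$ (recall that for disjoint sets coarse separation is equivalent to $X\setminus B$ being a coarse neighbourhood of $A$). Hence $N$ is a hybrid neighbourhood of $A$, and $(2)$ yields $f$ with $f(A)\subseteq\{0\}$ and $f(B)=f(X\setminus N)\subseteq\{1\}$. For $(3)\Rightarrow(2)$: given $A$ with hybrid neighbourhood $N$, I first replace $A$ by $\mathsf{cl}(A)$ (as in the proof of Lemma~\ref{neighN4hyb}, $N$ is still a hybrid neighbourhood of $\mathsf{cl}(A)$), so I may assume $A$ is closed. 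Set $B=\mathsf{cl}(X\setminus N)$, and choose an open $O$ with $A\subseteq O\subseteq N$, which exists because $N$ is a topological neighbourhood of the closed set $A$. Then $B\subseteq\mathsf{cl}(X\setminus O)=X\setminus O$, so $A$ and $B$ are disjoint closed sets. Granting that $A$ and $B$ are coarsely separated, $(3)$ produces $f$ with $f(A)\subseteq\{0\}$ and $f(X\setminus N)\subseteq f(B)\subseteq\{1\}$, giving $(2)$.

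The main obstacle, and the one step that is not pure bookkeeping, is verifying this last coarse separation: we know $A$ and $X\setminus N$ are coarsely separated (since $N$ is a coarse neighbourhood of $A$), and we must see that this survives replacing $X\setminus N$ by its closure. I therefore plan to prove the general fact that in an hls-space, if $A'$ and $B'$ are coarsely separated then so are $\mathsf{cl}(A')$ and $\mathsf{cl}(B')$. Given a uniformly bounded $\mathcal{U}$, coarsen it to an open scale $\mathcal{V}$ (any scale coarsens to an open scale); then, using $\mathsf{cl}(A')\subseteq\st(A',\mathcal{V})$ from Lemma~\ref{FirstLemma}, one gets $\st(\mathsf{cl}(A'),\mathcal{U})\subseteq\st(\mathsf{cl}(A'),\mathcal{V})\subseteq\st(A',\mathcal{W})$ with $\mathcal{W}=\st(\mathcal{V},\mathcal{V})$ uniformly bounded, and likewise for $B'$. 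Hence $\st(\mathsf{cl}(A'),\mathcal{U})\cap\st(\mathsf{cl}(B'),\mathcal{U})$ is contained in $\st(A',\mathcal{W})\cap\st(B',\mathcal{W})$, which is weakly bounded by hypothesis; as subsets of weakly bounded sets are weakly bounded, the closures are coarsely separated.

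Applying this with $A'=A$ (already closed) and $B'=X\setminus N$ supplies the missing coarse separation in $(3)\Rightarrow(2)$ and thereby closes the cycle $(1)\Leftrightarrow(2)\Leftrightarrow(3)$. I expect the only care needed elsewhere is the routine check that passing between $A$ and $\mathsf{cl}(A)$, and between $N$ and the open $O$, preserves the hybrid-neighbourhood relations, all of which follow from the definition of the hybrid operator together with the compatibility $\mathsf{cl}(A)\subseteq\st(A,\mathcal{U})$ for open scales.
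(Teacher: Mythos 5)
Your proposal is correct and follows essentially the same route as the paper: the equivalence $(1)\Leftrightarrow(2)$ by combining Lemma~\ref{neighN4hyb} with Theorem~\ref{neighurysohn} (via the identification of hybrid-neighbourhood continuity with continuity plus slow oscillation), and $(3)\Rightarrow(2)$ by passing to $\mathsf{cl}(A)$ and $\mathsf{cl}(X\setminus N)$. The only difference is that you explicitly prove the fact the paper's one-line proof leaves as ``notice that'' --- namely that closures of coarsely separated sets remain coarsely separated, via $\mathsf{cl}(A')\subseteq\st(A',\mathcal{V})$ from Lemma~\ref{FirstLemma} --- and your verification of that step is sound.
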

\begin{proof}
The only part which needs proving is the equivalence of (2) and (3). Clearly (2) implies (3). To show (3) implies (2), notice that if $A$ has hybrid neighbourhood $N$, then $\mathsf{cl}(A)$ and $\mathsf{cl}(X \setminus N)$ are closed, coarsely separated and disjoint.
\end{proof}

\section{Hls-normal spaces}
In this section we look at some more properties of hls-normal spaces, as well as some classes of examples of hls-normal hls-spaces. 

\begin{Lemma}\label{NormalityIssue}
 The topology of any hls-normal hls-space is normal.
\end{Lemma}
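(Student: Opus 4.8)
The plan is to show that if an hls-space $X$ is hls-normal, then its topology satisfies the classical normality condition for the topological neighbourhood operator, which by the Lemma preceding Urysohn's Lemma in the excerpt is equivalent to being a normal topological space. The key observation is that hls-normality is stated in terms of the hybrid neighbourhood operator, while topological normality is about the topological neighbourhood operator; since the hybrid operator is by definition the conjunction of the topological and coarse operators (so $A \prec_{\mathrm{hyb}} B$ is stronger than $A \prec_{\mathrm{top}} B$), I must bridge the gap between these two relations.

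First I would take two disjoint closed sets $A, B \subseteq X$, so that $X \setminus B$ is an open set containing $\mathsf{cl}(A) = A$; this means $X \setminus B$ is a topological neighbourhood of $A$. To invoke hls-normality I need $X \setminus B$ to be a \emph{hybrid} neighbourhood of $A$, i.e.\ also a coarse neighbourhood. The difficulty is that two disjoint closed sets need not be coarsely separated, so $X \setminus B$ may fail to be a coarse neighbourhood of $A$. The natural fix is to enlarge $B$ into its star: fixing an open scale $\mathcal{U}$, I would work with $\st(A, \mathcal{U})$ and $\st(B, \mathcal{U})$. By the final clause of Lemma \ref{FirstLemma}, $\mathsf{cl}(A) \subseteq \st(A, \mathcal{U})$, which is exactly the fact that makes an open set containing $\mathsf{cl}(A)$ automatically a coarse neighbourhood of $A$ — this is the trick already used in the forward direction of Lemma \ref{neighN4hyb}.

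So the core step is to replicate the argument in Lemma \ref{neighN4hyb}: given disjoint closed $A$ and $B$, I would argue that any open set $N$ with $\mathsf{cl}(A) \subseteq N \subseteq X \setminus B$ such that $N$ is also a coarse neighbourhood of $A$ exists, apply hls-normality (via Lemma \ref{neighN4hyb}, which tells us the hybrid operator satisfies $\mathsf{(N4)}$) to interpolate a hybrid neighbourhood $V$ with $A \prec_{\mathrm{hyb}} V \prec_{\mathrm{hyb}} N$, and then extract the purely topological content. Concretely, if $A \prec_{\mathrm{hyb}} V$ then in particular $A \prec_{\mathrm{top}} V$, meaning $\mathsf{int}(V)$ contains $\mathsf{cl}(A)$; likewise $V \prec_{\mathrm{top}} N$ gives $\mathsf{cl}(V) \subseteq N \subseteq X \setminus B$. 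Then $\mathsf{int}(V)$ and $X \setminus \mathsf{cl}(V)$ are disjoint open sets separating $A$ and $B$, which is precisely topological normality.

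The main obstacle I anticipate is ensuring that the relevant open set is a coarse neighbourhood so that hls-normality actually applies — one cannot feed an arbitrary pair of disjoint closed sets into the hybrid machinery, since hls-normality only promises interpolation for hybrid neighbourhoods. The clean way around this is to phrase everything through the neighbourhood-operator formalism: use Lemma \ref{neighN4hyb} to know $\mathsf{(N4)}$ holds for $\prec_{\mathrm{hyb}}$, observe that for a \emph{closed} set $A$ and an open set $N \supseteq A$ that is simultaneously a coarse neighbourhood, we have $A \prec_{\mathrm{hyb}} N$, and then forget the coarse part after interpolation. I would likely shortcut the whole construction by noting that the hybrid operator dominates the topological operator, so $\mathsf{(N4)}$ for $\prec_{\mathrm{hyb}}$, combined with the fact that open supersets of closures are coarse neighbourhoods (Lemma \ref{FirstLemma}), yields $\mathsf{(N4)}$ for $\prec_{\mathrm{top}}$; the preceding Lemma then identifies this with topological normality, completing the proof.
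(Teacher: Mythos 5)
Your argument breaks at its central step: the claim that an open set containing $\mathsf{cl}(A)$ is ``automatically a coarse neighbourhood of $A$'', which you attribute to Lemma \ref{FirstLemma} and to the forward direction of Lemma \ref{neighN4hyb}. This claim is false. Lemma \ref{FirstLemma} only says $\mathsf{cl}(A) \subseteq \st(A,\mathcal{U})$ for an open scale $\mathcal{U}$, and in the proof of Lemma \ref{neighN4hyb} that fact is used to upgrade a set which is \emph{already assumed} to be a coarse neighbourhood of $A$ (it is a hybrid neighbourhood by hypothesis) to a coarse neighbourhood of $\mathsf{cl}(A)$; it never manufactures the coarse-neighbourhood property out of purely topological data. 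Concretely, take $X = \mathbb{R}$ with the metric hls-structure (hls-normal by Proposition \ref{C0metricnormality}), $A = \mathbb{Z}$, $B = \mathbb{Z} + \tfrac{1}{2}$. These are disjoint closed sets, but if $\mathcal{U}$ is the cover by $2$-balls then $\st(A,\mathcal{U}) = \mathbb{R}$, so any coarse neighbourhood $N$ of $A$ contains $\mathbb{R}\setminus K$ for some bounded $K$ and hence meets $B$. Thus there is \emph{no} open set $N$ with $\mathsf{cl}(A)\subseteq N\subseteq X\setminus B$ that is a coarse neighbourhood of $A$, and you have nothing to feed into $\mathsf{(N4)}$ for $\prec_{\mathrm{hyb}}$. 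The same example kills your closing ``shortcut'': $\mathsf{(N4)}$ for the stronger relation $\prec_{\mathrm{hyb}}$ does not descend to $\mathsf{(N4)}$ for $\prec_{\mathrm{top}}$, precisely because a pair with $A \prec_{\mathrm{top}} C$ need not satisfy $A \prec_{\mathrm{hyb}} C$. (Your proposed ``fix'' of starring $B$ also goes the wrong way: enlarging $B$ only makes the separation problem harder.)

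The missing idea is localization. Since disjoint closed sets need not be coarsely separated globally, the paper never applies hls-normality to the pair $A$, $B$ directly. Instead it reduces to a $\mathcal{U}$-component, exhausts it by closed bounded sets $A_n \subseteq \mathrm{int}(A_{n+1})$ via Lemma \ref{DecompositionLemma}, and observes that \emph{inside a bounded set} any two disjoint closed sets are trivially coarsely separated in $X$ (their stars are bounded, hence weakly bounded), so Urysohn's Lemma for hls-spaces applies there and each $A_n$ is topologically normal. A continuous function separating $A$ from $B$ on all of $X$ is then assembled inductively over the exhaustion using the classical Tietze Extension Theorem, continuity following from $A_n \subseteq \mathrm{int}(A_{n+1})$. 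Any correct proof has to deal, as this one does, with disjoint closed pairs that are not coarsely separated; your proposal assumes that case away.
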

\begin{proof}
It is enough to consider the case where $X$ is $\mathcal{U}$-connected for some open scale $\mathcal{U}$ (since the $\mathcal{U}$-components are open-closed). Notice that the topology induced on any closed and bounded subset $Y$ of $X$ is normal due to the fact that any two disjoint, closed subsets of $Y$ are coarsely separated in $X$. Express $X$ as in Lemma \ref{DecompositionLemma}. Since each $A_i$ is topologically normal, so is their directed union. Indeed, for any closed subsets $A$ and $B$ of $X$, define a continuous function $f_1$ taking $A \cap A_1$ to $0$ and $B \cap A_1$ to $1$. Then use the Tietze Extension Theorem to extend the function which agrees with $f_1$ on $A_1$ and which sends $A \cap A_2$ to $0$ and $B \cap A_2$ to $1$ to all of $A_2$. Continuing this process one defines a function $f$ inductively which sends $A$ to $0$ and $B$ to $1$, and which is continuous by the conditions on the $A_n$.
\end{proof}

 \begin{Theorem}\label{BigNormalityIssue}
 If $X$ is a hybrid large scale space, then the following conditions are equivalent:
 \begin{itemize}
 \item[(1)] $X$ is hls-normal,
 \item[(2)] $X$ is ls-normal as an ls-space and the topology of $X$ is normal.
 \end{itemize}
\end{Theorem}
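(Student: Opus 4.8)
The plan is to work entirely at the level of neighbourhood operators. Write $\prec_t$, $\prec_c$ and $\prec_h$ for the topological, coarse and hybrid neighbourhood operators on $X$, so that $A \prec_h B$ iff $A \prec_t B$ and $A \prec_c B$. By Lemma \ref{neighN4hyb}, condition (1) is equivalent to $\prec_h$ satisfying $\mathsf{(N4)}$. Since the discrete topology makes $\prec_t$ the relation of inclusion, and hence makes $\prec_h$ coincide with $\prec_c$, the same lemma applied to $X$ with the discrete topology shows that $X$ is ls-normal iff $\prec_c$ satisfies $\mathsf{(N4)}$; likewise $X$ is topologically normal iff $\prec_t$ satisfies $\mathsf{(N4)}$. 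Thus the theorem reduces to the operator-theoretic statement that $\prec_h$ satisfies $\mathsf{(N4)}$ precisely when both $\prec_t$ and $\prec_c$ do. Two facts drive everything: (i) $\prec_c$ depends on its arguments only up to weakly bounded sets (if $N' \setminus N$ is weakly bounded then $B \prec_c N'$ implies $B \cap N \prec_c N$, because the star of a weakly bounded set under a uniformly bounded cover is again weakly bounded); and (ii) a defect lemma: if $A \prec_c N$, then fixing an open scale $\mathcal{U}$ and a weakly bounded $K$ with $\st(A,\mathcal{U}) \subseteq N \cup K$, the set $\mathsf{cl}(A) \setminus \mathrm{int}(N)$ lies in $\st(K,\mathcal{U})$ and so is weakly bounded. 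Fact (ii) holds because, by Lemma \ref{FirstLemma}, any $x \in \mathsf{cl}(A)$ lies in some open $U \in \mathcal{U}$ meeting $A$, and if $U$ missed $K$ then $U \subseteq N$ would force $x \in \mathrm{int}(N)$.

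For $(1) \Rightarrow (2)$, normality of the topology is exactly Lemma \ref{NormalityIssue}, so only ls-normality, i.e.\ $\mathsf{(N4)}$ for $\prec_c$, needs proof. Given $A \prec_c N$, pass to $\mathsf{cl}(A)$, which has the same coarse neighbourhoods as $A$ (as in the proof of Lemma \ref{neighN4hyb}). By the defect lemma the set $W = \mathsf{cl}(A) \setminus \mathrm{int}(N)$ is weakly bounded, so $N' := N \cup \st(W, \mathcal{U})$ differs from $N$ by a weakly bounded set and, being open around $W$, satisfies $\mathsf{cl}(A) \subseteq \mathrm{int}(N')$. Hence $N'$ is simultaneously a topological and a coarse neighbourhood of $\mathsf{cl}(A)$, i.e.\ $\mathsf{cl}(A) \prec_h N'$. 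Applying $\mathsf{(N4)}$ for $\prec_h$ (hypothesis (1)) gives $B$ with $\mathsf{cl}(A) \prec_h B \prec_h N'$; forgetting the topological part, using $A \subseteq \mathsf{cl}(A)$ and fact (i), yields $A \prec_c B \cap N \prec_c N$. This is the required intermediate coarse neighbourhood, so $\prec_c$ satisfies $\mathsf{(N4)}$.

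For $(2) \Rightarrow (1)$, I would use the Urysohn characterisation of hls-normality (Corollary \ref{UrysohnLemmaInHLS}): it suffices, for closed $A$ with hybrid neighbourhood $N$, to produce a continuous slowly oscillating $f : X \to [0,1]$ with $f(A) \subseteq \{0\}$ and $f(X \setminus N) \subseteq \{1\}$. Since $A \prec_c N$, ls-normality together with Proposition \ref{neighslow} furnishes a (possibly discontinuous) slowly oscillating $g$ separating $A$ from $X \setminus N$; since $\mathsf{cl}(A) \subseteq \mathrm{int}(N)$, topological normality makes the space amenable to Tietze extension on bounded pieces. Working one coarse component at a time, express it as an exhaustion $\bigcup_n A_n$ by closed bounded sets with $A_n \subseteq \mathrm{int}(A_{n+1})$ (Lemma \ref{DecompositionLemma}), and build $f$ by a telescoping Tietze construction as in Lemma \ref{NormalityIssue}: at stage $n$ extend the continuous function already defined on $A_{n-1}$ over the normal set $A_n$, keeping the values $0$ on $A \cap A_n$ and $1$ on $(X \setminus N) \cap A_n$, while additionally arranging that the new values differ from $g$ by at most $\delta_n$ on the shell $A_n \setminus \mathrm{int}(A_{n-1})$, with $\delta_n \downarrow 0$. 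The resulting $f$ is continuous by the nesting of the $A_n$, and slowly oscillating because its oscillation across any element of a fixed open scale is eventually controlled by that of $g$ plus $2\delta_n$; a final composition with a map $[0,1] \to [0,1]$ crushing a neighbourhood of $0$ and of $1$ makes it attain $0$ and $1$ exactly.

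The clean half is $(1) \Rightarrow (2)$, where the weakly bounded slack of the coarse operator lets one repair the topology of $N$ for free. The main obstacle is the controlled continuisation in $(2) \Rightarrow (1)$: one must smooth $g$ into a continuous function without destroying slow oscillation, which requires the per-shell approximation error to be uniformly small outside bounded sets and the shellwise extensions to glue continuously. This is where both hypotheses are genuinely used — topological normality to run Tietze on each bounded piece (which is normal by the argument of Lemma \ref{NormalityIssue}), and slow oscillation of $g$ to guarantee that a partition-of-unity smoothing at the scale $\mathcal{U}$ perturbs values by less than $\delta_n$ on far-out shells. The symmetry between the two directions is only apparent: $\prec_c$ tolerates weakly bounded errors, whereas the topological niceness of a coarse neighbourhood is a genuine constraint, which is exactly why $(2) \Rightarrow (1)$ cannot be carried out by the union/intersection manipulations that $\mathsf{(N0)}$--$\mathsf{(N3)}$ alone would permit.
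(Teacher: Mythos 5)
Your reduction to neighbourhood operators and your proof of $(1)\Rightarrow(2)$ are correct, and that half is genuinely different from the paper's: the paper proves ls-normality by applying the hls-Urysohn Lemma (Corollary \ref{UrysohnLemmaInHLS}) to the closures of two coarsely separated sets away from a bounded set and then redefining the resulting slowly oscillating function on that bounded set, whereas you never leave the level of sets. Your ``defect lemma'' (that $\mathsf{cl}(A)\setminus\mathrm{int}(N)$ lies in $\st(K,\mathcal{U})$ and hence is weakly bounded) and your fact (i) (that $\prec_c$ tolerates weakly bounded enlargements) both hold, so promoting $N$ to the hybrid neighbourhood $N'=N\cup\st(W,\mathcal{U})$ of $\mathsf{cl}(A)$, applying hybrid $\mathsf{(N4)}$, and intersecting back with $N$ is a clean, purely set-theoretic route to $\mathsf{(N4)}$ for $\prec_c$.

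The genuine gap is in $(2)\Rightarrow(1)$, in the ``controlled continuisation'' of $g$. You require, at stage $n$, a continuous function on $A_n$ within $\delta_n$ of $g$ on the shell $A_n\setminus\mathrm{int}(A_{n-1})$. This is impossible in general: slow oscillation constrains $g$ only outside weakly bounded sets (the set $K$ in its definition may be taken to contain any given bounded set), so the restriction of the furnished $g$ to a bounded shell is essentially arbitrary apart from its values on $A$ and $X\setminus N$; it can have topological oscillation close to $1$ at every point of the shell, and then no continuous function approximates it within $\delta_n<1/2$. The fallback you mention --- smoothing $g$ by a partition of unity subordinate to an open scale --- founders on the existence of that partition of unity: the hypothesis gives only topological normality, an open scale need not be locally finite, and a general hls-space need not be paracompact (the paper proves paracompactness only for \emph{proper} hls-spaces, Corollary \ref{ParacompactnessIssue}). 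A repair along your lines would further need the observation that slow oscillation with respect to an \emph{open} scale forces small topological oscillation of $g$ outside weakly bounded sets, an insertion theorem (Katetov--Tong) to turn small oscillation into continuous approximation, and a resolution of the conflict between ``agree with stage $n-1$ on $A_{n-1}$'' (error up to $\delta_{n-1}$) and ``error at most $\delta_n$'' on a shell that contains the frontier of $A_{n-1}$. None of this is needed: the paper's proof of this direction is set-theoretic, mirroring your own $(1)\Rightarrow(2)$. From ls-normality take an intermediate coarse neighbourhood $W$ of $A$ (so that $N$ is a coarse neighbourhood of $W$), fatten it to the closed coarse neighbourhood $V=\mathsf{cl}(\st(W,\mathcal{U}))$, note that $V\subseteq N\cup K$ with $K$ bounded, take a closed topological neighbourhood $V'\subseteq N$ of $A$ supplied by topological normality, and patch along the bounded defect: $(V\setminus\st(K,\mathcal{U}))\cup\bigl(V'\cap\mathsf{cl}(\st(K,\mathcal{U}))\bigr)$ is the required closed intermediate hybrid neighbourhood. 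You should replace your function-theoretic argument for $(2)\Rightarrow(1)$ with a patching argument of this kind.
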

\begin{proof}
 $ (1) \Rightarrow (2):$ In view of Lemma \ref{NormalityIssue}, the topology of $X$ is normal.
 Suppose $A$ and $B$ are two disjoint and coarsely separated subsets of $X$. Then the closures of $A$ and $B$ are also coarsely separated and disjoint outside of a bounded set $K$ by Lemma \ref{FirstLemma}. Thus there is a a slowly oscillating continuous function $f:X\to [0,1]$
sending $cl(A)\setminus K$ to $0$ and sending $cl(B) \setminus K$ to $1$ by hls-normality.
Redefine $f$ on $K$ such that it sends $A\cap K$ to $0$ and $B\cap K$ to $1$, and notice that the new $f$ is slowly oscillating.

$ (2) \Rightarrow (1):$  Suppose $A$ 
is a closed subset of $X$ and $U$ is its hybrid neighbourhood. We need to find a closed coarse neighbourhood $V$ of $A$ such that $U$ is a coarse neighbourhood of $V$. Pick an open scale $\mathcal{U}$ of $X$ and pick a coarse neighbourhood $W$ of $A$
so that $U$ is a coarse neighbourhood of $W$.
The set $V = \mathsf{cl}(\st(W,\mathcal{U}))$ is closed and is a coarse neighbourhood of $A$. However, it may not be contained in
$U$. Nonetheless, since $V$ is contained in $\st(\st(W, \mathcal{U}), \mathcal{U})$, it is contained in $U \cup K$ for some bounded set $K$. Using the topological normality of $X$, we may find a closed topological neighbourhood $V'$ of $A$ which is contained in $U$. The required intermediate closed hybrid neighbourhood between $A$ and $U$ is then $(V \setminus \st(K, \mathcal{U})) \cup (V' \cap \mathsf{cl}(\st(K, \mathcal{U})))$. 
\end{proof}

Thus we may say that
$$
\textbf{hls-normality}\ = \textbf{topological normality}\ +\ \textbf{ls-normality}.
$$
We should note that the compatibility axiom played a crucial role in the proof of this fact. We now present some examples of hls-normal spaces. In particular, we show that metric spaces, both with the usual ls-structure and with the $C_0$ structure introduced by Wright, are hls-normal, as is any set equipped with the maximal uniformly locally finite ls-structure.

\begin{Definition}[Wright \cite{Wright}]
Let $(X,d)$ be a metric space. Let $\mathcal{L}$ be the collection of all families $\mathcal{U}$ of subsets of $X$ such that for every $\varepsilon > 0$, there is a bounded set $B \subseteq X$ such that $\mathsf{diam}(U) \leq \varepsilon$ for all $U \in \mathcal{U}$ not intersecting $B$. Then $\mathcal{L}$ is an ls-structure, called the $C_0$ \textbf{ls-structure} associated to the metric $d$.
\end{Definition}

\begin{Proposition}\label{C0metricnormality}
Let $X$ be a metric space equipped with the metric topology. Then $X$ equipped with either the metric or $C_0$ ls-structure is hls-normal. 
\end{Proposition}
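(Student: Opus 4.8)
The plan is to verify the characterization of hls-normality in Corollary \ref{UrysohnLemmaInHLS}: it suffices to produce, for every pair of closed, disjoint, coarsely separated subsets $A$ and $B$ of $X$, a continuous slowly oscillating map $f\colon X\to[0,1]$ with $f(A)\subseteq\{0\}$ and $f(B)\subseteq\{1\}$. First I would record that both structures, together with the metric topology, are genuinely hls-spaces: for the metric ls-structure this is the Example of Section 2, while for the $C_0$ ls-structure the family $\{B(x,\rho(x))\mid x\in X\}$ with $\rho(x)=1/(1+d(x,x_0))$ for a fixed basepoint $x_0$ is an open scale, since its members are open, cover $X$, and shrink to diameter $0$ away from each bounded set. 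Both structures are coarsely connected (all finite sets are bounded), so ``weakly bounded'' coincides with ``bounded'' throughout. The natural candidate is the usual Urysohn function $f(x)=d(x,A)/\bigl(d(x,A)+d(x,B)\bigr)$, which is well defined and continuous because $A$ and $B$ are closed and disjoint (so the denominator never vanishes), and which sends $A$ to $0$ and $B$ to $1$. Everything therefore reduces to checking that $f$ is slowly oscillating, and this is where the two ls-structures must be treated separately.

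The common tool is the Lipschitz-type estimate $|f(x)-f(y)|\le \diam(U)/\bigl(d(y,A)+d(y,B)\bigr)$ for $x,y\in U$, obtained by expanding the difference of the two fractions and using $|d(x,A)-d(y,A)|\le d(x,y)$ together with the analogous bound for $B$. Consequently $\diam f(U)\le 2\,\diam(U)/\sup_{z\in U}\bigl(d(z,A)+d(z,B)\bigr)$, so $\diam f(U)<\varepsilon$ as soon as $U$ contains a point $z$ with $d(z,A)+d(z,B)>\tfrac{2}{\varepsilon}\diam(U)$. In other words, the only sets on which $f$ can oscillate by at least $\varepsilon$ are those lying entirely inside a ``collision region'' where both $d(\cdot,A)$ and $d(\cdot,B)$ are small relative to the local scale, and the whole point is that coarse separation forces such regions to be bounded.

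For the metric ls-structure this is immediate. Given a uniformly bounded family $\mathcal U$, all of its members have diameter at most some $D>0$. Given $\varepsilon>0$, put $\mathcal V=\{B(x,\tfrac{3D}{\varepsilon})\mid x\in X\}$ and $K=\st(A,\mathcal V)\cap\st(B,\mathcal V)$, which is bounded because $A$ and $B$ are coarsely separated. If $U\in\mathcal U$ misses $K$, then no point of $U$ lies within $\tfrac{3D}{\varepsilon}$ of both $A$ and $B$, so every $z\in U$ has $d(z,A)+d(z,B)>\tfrac{3D}{\varepsilon}>\tfrac{2}{\varepsilon}\diam(U)$; the estimate then gives $\diam f(U)<\varepsilon$. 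Hence $f$ is slowly oscillating. (For the metric structure one may alternatively bypass the explicit function: the two-valued map $A\cup B\to\{0,1\}$ is slowly oscillating precisely because $A$ and $B$ are coarsely separated, and it extends by the Dydak--Mitra Theorem \ref{DMtheorem}.)

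The $C_0$ case is the main obstacle, and is where the argument genuinely uses the defining feature of the $C_0$ structure. Here a uniformly bounded family need not have uniformly bounded diameters; instead, for each $\eta>0$ there is a bounded set $B_\eta$ outside which all members have diameter at most $\eta$. The fixed-radius collision region used above is no longer bounded in general, so a single $D$ cannot be used. Instead I would build from the sets $B_\eta$ a location-dependent scale $\sigma\colon X\to(0,\infty)$ with $\sigma\to 0$ at infinity, controlling the diameter at the points of each far-out member of $\mathcal U$, and then apply coarse separation in the form that $\{z\mid d(z,A)\le\phi(z)\ \text{and}\ d(z,B)\le\phi(z)\}$ is bounded for every $\phi$ vanishing at infinity, taking $\phi=\tfrac{2}{\varepsilon}\sigma$. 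The delicate bookkeeping is that a $C_0$-uniformly bounded family may also contain very large sets anchored near the bounded core; these always meet the bounded set $K$ we construct and so are automatically excused, but the choices of $K$ and of $\sigma$ must be arranged so that the genuinely small, far-out sets land inside the shrinking collision region. Once $f$ is shown to be slowly oscillating in both cases, Corollary \ref{UrysohnLemmaInHLS} yields hls-normality, the required topological normality being automatic since every metric space is normal.
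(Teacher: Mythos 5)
Your route is genuinely different from the paper's. The paper never invokes Corollary \ref{UrysohnLemmaInHLS}: it verifies the definition of hls-normality directly by exhibiting, for a closed set $A$ with hybrid neighbourhood $U$, the explicit intermediate set $V=\{x\in X\mid d(x,A)\le d(x,X\setminus U)\}$, and proves $V$ works via a single doubling trick --- if $\st(A,\mathcal{U})\cap(X\setminus V)$ were unbounded for a cover $\mathcal{U}$ by balls of bounded radii, then $\st(A,\mathcal{U}')\cap(X\setminus U)$ would be unbounded, where $\mathcal{U}'$ replaces each $B(x,R)$ by $B(x,2R)$; the point is that doubling radii preserves uniform boundedness in \emph{both} the metric and the $C_0$ structures, so no case distinction is ever needed. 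Your approach --- verifying condition (3) of Corollary \ref{UrysohnLemmaInHLS} with the Urysohn quotient $f=d(\cdot,A)/(d(\cdot,A)+d(\cdot,B))$ --- buys an explicit separating function, and your metric-case argument (the Lipschitz-type estimate together with the bounded collision region $\st(A,\mathcal{V})\cap\st(B,\mathcal{V})$) is complete and correct. The price is that slow oscillation must be checked structure by structure, and that is exactly where your write-up thins out.

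As written, the $C_0$ half is a plan rather than a proof: two assertions carry all the weight and neither is justified. First, you invoke ``coarse separation in the form that $Z_\phi=\{z\mid d(z,A)\le\phi(z)\ \text{and}\ d(z,B)\le\phi(z)\}$ is bounded for every $\phi$ vanishing at infinity.'' That statement is true but it is not the definition; to get it, apply the definition of coarse separation to the variable-radius family $\mathcal{V}_\phi=\{B(z,2\phi(z))\mid z\in X\}$, which is $C_0$-uniformly bounded precisely because $\{\phi\ge\eta\}$ is bounded for each $\eta>0$, and observe that $Z_\phi\subseteq\st(A,\mathcal{V}_\phi)\cap\st(B,\mathcal{V}_\phi)$. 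Second, the scale function $\sigma$ must actually be produced: take nested bounded sets $B_n=B(x_0,r_n)$ with $\diam(U)\le 1/n$ whenever $U\in\mathcal{U}$ misses $B_n$, and set $\sigma(z)=\min\{1,\inf\{1/n\mid d(z,x_0)>r_n+1\}\}$; since any $U$ missing $B_1$ has diameter at most $1$, a single point $z\in U$ with $d(z,x_0)>r_n+1$ forces $U\cap B_n=\emptyset$, hence $\diam(U)\le\sigma(z)$ for \emph{every} $z\in U$. With $\phi=(3/\varepsilon)\sigma$ and $K=B_1\cup Z_\phi$, your estimate then closes the argument, so the gap is fillable --- but as submitted, the hardest case is delegated to ``delicate bookkeeping.'' Finally, your parenthetical shortcut via Theorem \ref{DMtheorem} only yields a slowly oscillating extension, not a continuous one, so by itself it gives ls-normality rather than hls-normality; to finish that way you would need to combine it with topological normality of metric spaces through Theorem \ref{BigNormalityIssue}.
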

\begin{proof}
The same construction works for both the metric and $C_0$ ls-structures.
Let $A$ be a closed subset  and $U$ a hybrid neighbourhood of $A$. Let $V$ be the set of all points $x \in X$ such that $d(x, A) \leq d(x, X \setminus U)$. Clearly $V$ is closed and contains a neighbourhood of $A$. We claim that $V$ is an intermediate coarse neighbourhood between $A$ and $U$. Indeed, let $\mathcal{U}$ be a cover of $X$ by balls of bounded radii. If $\st(A, \mathcal{U})$ intersects $X \setminus V$ in an unbounded set, then it is easy to check that $\st(A, \mathcal{U}')$ intersects $X \setminus U$ in an unbounded set, where $\mathcal{U}'$ is the set formed from $\mathcal{U}$ by replacing every ball $B(x, R)$ by $B(x, 2R)$. This is a contradiction since $\mathcal{U}'$ is uniformly bounded whenever $\mathcal{U}$ is for both ls-structures. A similar argument shows that $U$ is a hybrid neighbourhood of $V$.
\end{proof}

A family $\mathcal{U}$ of subsets of a set $X$ is \textbf{uniformly locally finite}
if there is a natural number $m$ so that $\mathrm{card}(st(x,\mathcal{U}))\leq m$ for all $x\in X$

\begin{Definition}[Sako \cite{Sako}]
 A large scale space $X$ is \textbf{uniformly locally finite} if every uniformly bounded cover $\mathcal{U}$ of $X$ is uniformly locally finite.
\end{Definition}

On any set $X$ the collection of all uniformly locally finite families forms an ls-structure (called the \textbf{maximal uniformly locally finite ls-structure}), which is the largest uniformly locally finite ls-structure on $X$. Viewed as a coarse structure in the sense of Roe, the maximal uniformly locally finite structure is nothing but the universal bounded geometry structure in the sense of \cite{Roe}.

\begin{Proposition}
Let $X$ be a set equipped with the maximal uniformly locally finite ls-structure. Then $X$ is an ls-normal space.
\end{Proposition}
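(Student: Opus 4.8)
The plan is to reduce ls-normality to the single statement that the coarse neighbourhood operator on $X$ satisfies $\mathsf{(N4)}$. Since ls-normality is tested with the discrete topology, for any $A \subseteq B$ the set $A$ is open and closed, so $B$ is a topological neighbourhood of $A$ precisely when $A \subseteq B$; hence the hybrid neighbourhood operator coincides with the coarse neighbourhood operator, and by Lemma \ref{neighN4hyb} it suffices to verify $\mathsf{(N4)}$ for the latter. I would begin by recording that in the maximal uniformly locally finite ls-structure the bounded sets are exactly the finite sets: any family containing an infinite set $B$ fails to be uniformly locally finite because $\st(b,\mathcal{U}) \supseteq B$ for $b \in B$, while every finite set $F$ is bounded via the family $\{F\}$ together with all singletons. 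Consequently $X$ has a single coarse component and weakly bounded $=$ bounded $=$ finite, so ``$C$ is a coarse neighbourhood of $A$'' unwinds to: $A \subseteq C$ and $\st(A,\mathcal{U}) \setminus C$ is finite for every uniformly bounded cover $\mathcal{U}$.

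The heart of the argument is the following clean description, which I would isolate as the key step: $C$ is a coarse neighbourhood of $A$ if and only if $A \subseteq C$ and either $A$ or $X \setminus C$ is finite. If $A$ is finite then $\st(A,\mathcal{U})$ is finite for every uniformly bounded $\mathcal{U}$ (its cardinality is at most $m\cdot\mathrm{card}(A)$, where $m$ is a local finiteness bound for $\mathcal{U}$), and if $X \setminus C$ is finite then $\st(A,\mathcal{U}) \setminus C \subseteq X \setminus C$ is finite; in either case $C$ is a coarse neighbourhood of $A$. For the converse I would argue the contrapositive: if both $A$ and $X \setminus C$ are infinite, choose distinct $a_1, a_2, \ldots \in A$ and distinct $y_1, y_2, \ldots \in X \setminus C$ (these two lists are automatically disjoint since $A \subseteq C$), and form $\mathcal{U} = \{\{a_i, y_i\}\}_{i} \cup \{\{x\} \mid x \in X\}$. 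Then $\mathrm{card}(\st(x,\mathcal{U})) \le 2$ for every $x$, so $\mathcal{U}$ is uniformly bounded, yet $\st(A,\mathcal{U}) \setminus C \supseteq \{y_i \mid i \ge 1\}$ is infinite, so $C$ is not a coarse neighbourhood of $A$. The single real idea here is that in the maximal uniformly locally finite structure an arbitrary partial matching between $A$ and $X \setminus C$ is itself a uniformly bounded family, and this is exactly what forces a genuine coarse neighbourhood either to absorb all but finitely much of $X$ or to sit over a finite core.

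Granting this characterization, $\mathsf{(N4)}$ is immediate. Given $A \prec C$, one of $A$, $X \setminus C$ must be finite. If $A$ is finite, take $B = A$: then $A$ is a coarse neighbourhood of itself (as $A$ is finite) and $C$ is a coarse neighbourhood of $B = A$ (our hypothesis), so $A \prec B \prec C$. If $X \setminus C$ is finite, take $B = C$: then $C$ is a coarse neighbourhood of $A$ (our hypothesis) and $C$ is a coarse neighbourhood of itself (as $X \setminus C$ is finite), so again $A \prec B \prec C$. Since these two cases exhaust the possibilities, $\mathsf{(N4)}$ holds and $X$ is ls-normal.

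I do not anticipate any serious obstacle beyond the matching construction in the second paragraph; everything else is bookkeeping around the identifications bounded $=$ finite and hybrid $=$ coarse in the discrete case. The only place where one must be slightly careful is checking that the family $\mathcal{U}$ built from the matching is genuinely uniformly bounded rather than merely a matching of pairs, which is why I add the singletons to make it a cover and verify the uniform bound $\mathrm{card}(\st(x,\mathcal{U})) \le 2$ explicitly.
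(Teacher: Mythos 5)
Your proposal is correct and takes essentially the same approach as the paper: your characterization of coarse neighbourhoods (that $C$ is a coarse neighbourhood of $A$ iff $A \subseteq C$ and one of $A$, $X \setminus C$ is finite) is a restatement of the paper's key observation that any two coarsely separated subsets in this structure have a finite member, with your matching construction supplying the proof the paper leaves implicit. The reduction to $\mathsf{(N4)}$ via the discrete topology and the two-case verification are exactly the details the paper asserts ``follow easily.''
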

\begin{proof}
Note that given any two coarsely separated subsets $A$ and $B$ of $X$ relative to this structure, one of them is finite. The result follows easily from this observation. 
\end{proof}

\section{Non-normal spaces}
At this point, one might ask if there are any hls-spaces which are not hls-normal. An example of an ls-space which is not ls-normal is described below in Proposition \ref{Nonnormal}. In Section \ref{secgroups} we will also see a class of topological groups which are not hls-normal as hls-spaces.

\begin{Proposition} \label{Nonnormal}
Let $X$ be the subset of the upper half-plane of $\mathbb{Z}^2$ given by $-y \leq x \leq y$, $y > 0$. Let $A = \{(x, x) \mid x\in \mathbb{Z}, x > 0\}$ and $B = \{(-x, x) \mid x \in \mathbb{Z}, x > 0\}$. Define an ls-structure on this space as follows: let $\mathscr{L}$ be the set of all uniformly locally finite families $\mathcal{V}$ such that for any scale $\mathcal{U}$ in the metric ls-structure on $X$ the set
of $V\in \mathcal{V}$ intersecting $\st(A\cup B,\mathcal{U})$ is uniformly bounded in the metric ls-structure on $X$. Then:
\begin{itemize}
\item[(1)] The collection $\mathscr{L}$ is a uniformly locally finite ls-structure on $X$. The uniformly bounded families with respect to the metric ls-structure are members of $\mathcal{L}$.
\item[(2)] $A$ and $B$ are coarsely separated in $(X, \mathscr{L})$.
\item[(3)] there is no slowly oscillating (with respect to $\mathscr{L})$ function $f: X \rightarrow [0,1]$ such that $f(A) = 0$, $f(B) = 1$.
\end{itemize}
so that, in particular, $X$ is not hls-normal.
\end{Proposition}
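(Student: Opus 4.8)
The plan is to verify the three numbered claims in turn and then deduce non-normality from the equivalence (1)$\Leftrightarrow$(3) of Corollary \ref{UrysohnLemmaInHLS} (viewing the ls-space with the discrete topology, so that coarsely separated disjoint sets are automatically closed and slowly oscillating maps are the relevant ones). Throughout I would exploit that uniform local finiteness forces members to be uniformly finite: if $x\in U\in\mathcal{V}$ then $U\subseteq\st(x,\mathcal{V})$, so $|U|\le m$. For claim (1), this makes the refinement axiom routine -- every multi-point member of a refinement lies in some member of $\mathcal{B}_1$, hence inside $\st(x,\mathcal{B}_1)$, so there are only finitely many such members through each point, while the metric condition defining $\mathscr{L}$ passes to refinements because families of singletons are metrically uniformly bounded. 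For the star axiom the real content is that the defining metric condition survives $\st(\mathcal{B}_1,\mathcal{B}_2)$: if a starred set $\st(B_1,\mathcal{B}_2)$ meets a fixed metric neighbourhood of $A\cup B$, then since the condition makes every member of $\mathcal{B}_1$ or $\mathcal{B}_2$ coming near $A\cup B$ metrically small, one propagates the metric bound outward from the meeting point through $B_1$ and the adjacent $\mathcal{B}_2$-members (treating separately the cases where the meeting point lies in $B_1$ or in an adjacent member). The identification of bounded sets is then immediate: in both structures the bounded sets are exactly the finite ones, and so $X$ is coarsely connected and ``weakly bounded'' means ``finite''.

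For claim (2), fix $\mathcal{V}\in\mathscr{L}$. Any $V$ meeting $A$ (or $B$) meets $\st(A\cup B,\mathcal{U})$ for the radius-one metric scale $\mathcal{U}$, so the defining condition bounds all such $V$ by a common radius $R$; hence $\st(A,\mathcal{V})$ and $\st(B,\mathcal{V})$ lie in the metric $R$-neighbourhoods of $A$ and $B$. Since $A=\{(t,t)\}$ and $B=\{(-t,t)\}$ diverge, a point within $R$ of both forces $|x-y|$ and $|x+y|$ to be $O(R)$, hence lies in a finite region; thus $\st(A,\mathcal{V})\cap\st(B,\mathcal{V})$ is finite, i.e.\ weakly bounded.

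Claim (3) is the crux. Suppose a slowly oscillating $f\colon X\to[0,1]$ with $f(A)=\{0\}$ and $f(B)=\{1\}$ existed. The engine is that the family $\mathcal{U}_1$ of all unit balls is uniformly locally finite and metrically uniformly bounded, hence lies in $\mathscr{L}$; applying slow oscillation to $\mathcal{U}_1$ shows that for every $\eta>0$ there is a finite set outside of which $|f(p)-f(q)|<\eta$ for adjacent $p,q$. Reading off each row $y$ (which runs from $f=1$ at its left endpoint on $B$ to $f=0$ at its right endpoint on $A$), I would take $u_y$ and $v_y$ to be the leftmost points with $f\le 2/3$ and $f\le 1/3$; for large $y$ the neighbour bound gives $f(u_y)>2/3-\eta$ and $f(v_y)>1/3-\eta$, so $f(u_y)-f(v_y)\ge 1/5$. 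The key lemma is that slow oscillation forces $\dist(u_y,A\cup B)\to\infty$ and $\dist(v_y,A\cup B)\to\infty$: chaining unit steps along the row from $v_y$ to $A$ (respectively to $B$) bounds $f(v_y)$ (respectively $1-f(v_y)$) by $(\text{path length})\cdot\eta$, so if $v_y$ stayed within bounded distance of an edge along a subsequence one could choose $\eta$ small enough to force $f(v_y)$ below $0.2$, contradicting $f(v_y)>0.23$; the symmetric estimates handle $u_y$. Consequently the two-point sets $V_y=\{u_y,v_y\}$ recede from $A\cup B$, so $\{V_y\}_{y\ge y_0}$ is uniformly locally finite and satisfies the defining condition of $\mathscr{L}$ trivially, as only finitely many of its members ever meet a fixed metric neighbourhood of $A\cup B$; hence it is a genuine $\mathscr{L}$-scale. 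But $\diam f(V_y)\ge 1/5$ for all large $y$ while the $V_y$ leave every finite set, contradicting slow oscillation.

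I expect the main obstacle to be precisely this recession lemma in claim (3): the geometry of $\mathscr{L}$ permits arbitrarily large sets in the interior of the cone but only metrically small sets near $A\cup B$, so the naive idea of joining $A$ to $B$ by one set fails, and the transition of $f$ could a priori hug an edge, where one may use only bounded sets. The point that rescues the argument is that metric slow oscillation (obtained for free from the single scale $\mathcal{U}_1$) prevents $f$ from changing by a definite amount over a bounded distance far out, which pushes the entire ``$1/3$-to-$2/3$'' transition band away from both edges and into the coarse interior, where the large $\mathscr{L}$-scale $\{V_y\}$ can catch it. The remaining verifications -- the star-axiom computation in (1) and the neighbourhood estimate in (2) -- are bookkeeping by comparison.
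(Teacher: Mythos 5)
Your proposal is correct and matches the paper's proof in all essentials: the same propagation argument for the star axiom in claim (1), the same ``members touching $A$ or $B$ are metrically small, then use divergence of $A$ and $B$'' argument for claim (2), and for claim (3) the same construction of a uniformly locally finite family of two-point sets, one per row, on which $f$ varies by a definite amount, which lies in $\mathscr{L}$ because metric slow oscillation (available since $\mathscr{L}$ contains the metric scales) forces the transition band away from $A\cup B$. The only difference is how that last membership is justified: you prove the recession of $u_y,v_y$ from $A\cup B$ by explicit chaining along rows, while the paper notes directly that the union of its two-point sets is coarsely separated from $A$ and $B$ in the metric ls-structure -- the same estimate packaged differently.
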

\begin{proof}
(1) Suppose $\mathcal{V}_1, \mathcal{V}_2 \in \mathscr{L}$ are covers. We will show that $\st(\mathcal{V}_1, \mathcal{V}_2)$ is in $\mathscr{L}$. Clearly $\st(\mathcal{V}_1, \mathcal{V}_2)$ is uniformly locally finite.  Suppose then that $\mathcal{U}$ is a scale in the metric ls-structure on $X$, and let $U = \st(A \cup B, \mathcal{U})$. Let $\mathcal{V}_2'$ be the family of elements of $\mathcal{V}_2$ intersecting $U$, $\mathcal{V}_1'$ the family of elements of $\mathcal{V}_1$ intersecting $\st(U, \mathcal{V}_2')$ and $\mathcal{V}_2''$ the family of elements of $\mathcal{V}_2$ intersecting $\st(\st(U, \mathcal{V}_2'), \mathcal{V}_1')$. Each of these families is uniformly bounded in the metric ls-structure, and the family of elements of $\st(\mathcal{V}_1, \mathcal{V}_2)$ intersecting $U$ clearly refines $\st(\mathcal{V}_1', \mathcal{V}_2' \cup \mathcal{V}_2'')$, so it is uniformly bounded in the metric ls-structure.

(2) Suppose $\mathcal{V}\in \mathscr{L}$.
The set $\st(A,\mathcal{V})\cap \st(B,\mathcal{V})$ is contained in the union of  all sets $V_1\cap V_2$, where $V_1\in \mathcal{V}$ intersects $A$ and $V_2\in \mathcal{V}$ intersects $B$, so the family of those sets forms a uniformly bounded family $\mathcal{U}$ in the metric ls-structure on $X$.
Therefore $\st(A,\mathcal{V})\cap \st(B,\mathcal{V})\subset \st(A,\mathcal{U})\cap \st(B,\mathcal{U})$ which is finite.

(3) Suppose such an $f$ exists. Let $X_i$ be the set $\{(x,i) \mid x \in \mathbb{Z}\} \cap X$. Consider the cover of $X$
by $2$-balls in the $l_1$-metric structure on $X$.
Since $f$ is in particular slowly oscillating with respect to the metric ls-structure, there is some $M > 0$ such that outside
of the $M$-ball at $(1,1)$ one has $|f(z_1)-f(z_2)| < 1/6$ if $z_1$, $z_2$ are on the same horizontal line and their distance
is $1$. Therefore
$f^{-1}([1/6, 1/3])$ and $f^{-1}([2/3, 5/6])$ both intersect $X_i$ for $i > M+2$. Take 
$z_i\in f^{-1}([1/6, 1/3])\cap X_i$ and $w_i\in f^{-1}([2/3, 5/6])\cap X_i$ for $i > M+2$.
Notice $\mathcal{Z} = \{z_i,w_i\}_{i > M+2}$ belongs to $\mathscr{L}$. Indeed,
since $f$ is slowly oscillating with respect to the metric structure, the union of $\mathcal{Z}$ must be coarsely separated from $A$ and $B$
in the metric ls-structure. Thus the family $\mathcal{Z}$ is an element of $\mathscr{L}$. However, $|f(z_i)-f(w_i)| \ge 1/3$ for all $i> M+2$, which contradicts the fact that $f$ is slowly oscillating with respect to $\mathscr{L}$.
\end{proof}

\section{The Tietze Extension Theorem}
As with Urysohn's Lemma, the proof of the Tietze Extension Theorem for neighbourhood operators is a straightforward adaptation of the classical proof, and gives us the result for (hybrid) large scale spaces as a corollary.

\begin{Lemma}\label{addition}
Let $\prec$ be a neighbourhood operator on a set $X$ satisfying $\mathsf{(N0)}-\mathsf{(N3)}$ and let $f,g: X \rightarrow [-M, M]$ be two neighbourhood continuous maps. Then $f+g$ is neighbourhood continuous.
\end{Lemma}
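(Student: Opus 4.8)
The plan is to verify, for the sum $h = f+g$ (which takes values in $[-2M,2M]$), the interval criterion of Lemma~\ref{easyintervals}. Since that lemma is stated for $[0,1]$, I would first record that it transfers verbatim to any compact interval $[c,d]$ via the increasing affine homeomorphism $[c,d]\to[0,1]$: such a homeomorphism identifies the topological neighbourhood operators on the two intervals and commutes with taking preimages, so a map into $[c,d]$ is neighbourhood continuous if and only if $h^{-1}([c,a]) \prec h^{-1}([c,b))$ for all $a<b$ in $[c,d]$. In particular, it suffices to show that for every $a<b$ in $[-2M,2M]$ one has $\{x : f(x)+g(x)\le a\} \prec \{x : f(x)+g(x) < b\}$.

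Next I would discretize using the values of $f$. Fix $a<b$, set $\varepsilon = (b-a)/2$, and choose $\delta>0$ with $\varepsilon + 2\delta < b-a$ (for instance $\delta = (b-a)/8$). Partition the codomain $[-M,M]$ of $f$ as $-M = s_0 < s_1 < \dots < s_n = M$ with $s_i - s_{i-1} < \varepsilon$, and set $A_i = f^{-1}([-M, s_i]) \cap g^{-1}([-M, a - s_{i-1}])$ for $1\le i\le n$. If $f(x)+g(x)\le a$, then $f(x)$ lies in some $[s_{i-1}, s_i]$, whence $f(x) \le s_i$ and $g(x) \le a - f(x) \le a - s_{i-1}$; thus $\{x : f(x)+g(x)\le a\} \subseteq \bigcup_{i=1}^n A_i$.

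I would then produce a neighbourhood of each $A_i$ lying inside $\{x : f(x)+g(x)<b\}$. Applying the interval criterion to $f$ and to $g$ gives $f^{-1}([-M,s_i]) \prec f^{-1}([-M, s_i+\delta))$ and $g^{-1}([-M, a-s_{i-1}]) \prec g^{-1}([-M, a-s_{i-1}+\delta))$, where the boundary cases in which a bound leaves $[-M,M]$ are trivial consequences of $\mathsf{(N0)}$ and $\mathsf{(N0')}$. By $\mathsf{(N3')}$ their intersection satisfies $A_i \prec N_i$, with $N_i = f^{-1}([-M,s_i+\delta)) \cap g^{-1}([-M, a-s_{i-1}+\delta))$. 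Any $x \in N_i$ then has $f(x)+g(x) < (s_i - s_{i-1}) + a + 2\delta < \varepsilon + a + 2\delta < b$, so $N_i \subseteq \{x : f(x)+g(x)<b\}$.

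Finally I would assemble these pieces using the finite-union form of $\mathsf{(N3)}$ to get $\bigcup_i A_i \prec \bigcup_i N_i$; since $\{x:f(x)+g(x)\le a\} \subseteq \bigcup_i A_i$ and $\bigcup_i N_i \subseteq \{x:f(x)+g(x)<b\}$, conditions $\mathsf{(N2')}$ and $\mathsf{(N2)}$ yield $\{x:f(x)+g(x)\le a\} \prec \{x:f(x)+g(x)<b\}$, completing the verification. The step I expect to require the most care, and the only genuinely substantive one, is the bookkeeping that forces each $N_i$ into $\{x:f(x)+g(x)<b\}$: the mesh $\varepsilon$ and the slack $\delta$ must be coordinated so that $\varepsilon + 2\delta < b-a$, and one must note that the intersection axiom $\mathsf{(N3')}$ is precisely what allows the separate neighbourhood continuity of $f$ and $g$ to be combined. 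Everything else is the routine transfer of Lemma~\ref{easyintervals} to a general interval together with a finite induction on $\mathsf{(N3)}$.
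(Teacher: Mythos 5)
Your proposal is correct and follows essentially the same route as the paper: reduce via Lemma~\ref{easyintervals} (transferred to a general compact interval by an affine homeomorphism) to the sublevel-set criterion, cover $\{x : f(x)+g(x)\le a\}$ by finitely many sets of the form $f^{-1}(\text{interval})\cap g^{-1}(\text{interval})$ obtained by discretizing the values of $f$, enlarge each piece using neighbourhood continuity of $f$ and $g$ together with $\mathsf{(N3')}$, and assemble with $\mathsf{(N3)}$, $\mathsf{(N2)}$, $\mathsf{(N2')}$. Your mesh/slack bookkeeping ($\varepsilon+2\delta<b-a$) plays exactly the role of the paper's $\varepsilon/4$ staircase, so there is nothing substantive to add.
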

\begin{proof}
By Lemma \ref{easyintervals} (since $[-2M, 2M]$ is homeomorphic to $[0,1]$), it is enough to show that for any interval $[-2M,b]$ in $\mathbb{R}$ and $\varepsilon >0$, that $(f+g)^{-1}([-2M,b])$ has neighbourhood $(f+g)^{-1}([-2M, b+\varepsilon))$ relative to $\prec$. Cover $[-M, M]$ by finitely many intervals $I_n = [-M,\ n\varepsilon/4 + \varepsilon/4]$ and $J_n = [-M,\ b - n\varepsilon/4]$, $n \in \mathbb{Z}$. It follows that
\begin{align*}
(f+g)^{-1}([-2M,b]) & \subseteq \bigcup_n f^{-1}(I_n) \cap g^{-1}(J_n) \\
& \subseteq \bigcup_n f^{-1}(B(I_n, \varepsilon/4)) \cap g^{-1}(B(J_n, \varepsilon/4)) \\
& \subseteq (f+g)^{-1}([-2M, b+\varepsilon)).
\end{align*}
Since $f$ and $g$ are neighbourhood continuous and $\prec$ satisfies $\mathsf{(N0)}-\mathsf{(N3)}$, we have
$$
\bigcup_n f^{-1}(I_n) \cap g^{-1}(J_n) 
\prec \bigcup_n f^{-1}(B(I_n, \varepsilon/4)) \cap g^{-1}(B(J_n, \varepsilon/4)) 
$$
which completes the proof.
\end{proof}

\begin{Lemma}\label{neighlemmaforTietze}
Let $\prec$ be a neighbourhood operator on a set $X$ satisfying $\mathsf{(N0)}-\mathsf{(N3)}$. Suppose $g_n: X \rightarrow [-m_n, m_n]$ is a sequence of neighbourhood continuous maps such that 
$$
\sum\limits_{i=1}^{\infty} m_n = m < \infty.
$$
Then $f = \sum_{i=1}^{\infty} g_n : X \rightarrow \mathbb{R}$ is neighbourhood continuous.
\end{Lemma}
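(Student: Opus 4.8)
The plan is to show that the infinite sum $f = \sum_{n=1}^\infty g_n$ is neighbourhood continuous by reducing, via Lemma \ref{easyintervals}, to verifying the single condition $f^{-1}([-m, b]) \prec f^{-1}([-m, b+\varepsilon))$ for every real $b$ and every $\varepsilon > 0$ (after rescaling the codomain $[-m,m]$ to $[0,1]$). The key idea is that a single value $f(x) = \sum_n g_n(x)$ is controlled up to a tail by finitely many of the partial sums, so I expect to combine a finite-sum estimate (Lemma \ref{addition}) with a tail bound coming from the summability hypothesis $\sum_n m_n = m < \infty$.

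The first step is to choose $N$ large enough that the tail satisfies $\sum_{n > N} m_n < \varepsilon/4$, so that for every $x \in X$ we have $\bigl|\, f(x) - \sum_{n=1}^N g_n(x) \,\bigr| \le \sum_{n > N} m_n < \varepsilon/4$. Writing $S_N = \sum_{n=1}^N g_n$ for the $N$-th partial sum, this means $f$ and $S_N$ differ pointwise by less than $\varepsilon/4$ everywhere. By repeated application of Lemma \ref{addition}, $S_N$ is a finite sum of neighbourhood continuous maps and hence is itself neighbourhood continuous (its range lies in $[-M_N, M_N]$ with $M_N = \sum_{n=1}^N m_n \le m$).

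The second step exploits the uniform closeness of $f$ to $S_N$ to sandwich the relevant preimages between preimages of $S_N$, for which neighbourhood continuity is already known. Concretely, since $|f - S_N| < \varepsilon/4$ pointwise, I obtain the inclusions
$$
f^{-1}([-m, b]) \ \subseteq\ S_N^{-1}([-m, b + \varepsilon/4]) \ \prec\ S_N^{-1}([-m, b + 3\varepsilon/4)) \ \subseteq\ f^{-1}([-m, b + \varepsilon)),
$$
where the middle relation is neighbourhood continuity of $S_N$ applied to the nested intervals $[-m, b+\varepsilon/4] \subseteq [-m, b+3\varepsilon/4)$ (via Lemma \ref{easyintervals} again, after rescaling). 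Applying $\mathsf{(N2)}$ and $\mathsf{(N2')}$ to absorb the outer inclusions then yields $f^{-1}([-m, b]) \prec f^{-1}([-m, b+\varepsilon))$, as required, and Lemma \ref{easyintervals} converts this back into full neighbourhood continuity of $f$.

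The main obstacle I anticipate is not conceptual but bookkeeping: one must be careful that the preimages being related are all genuinely of the form handled by Lemma \ref{easyintervals} (closed on the left, half-open on the right, with the lower endpoint at the bottom of the range), and that the rescaling of $S_N$'s codomain $[-M_N, M_N]$ to $[0,1]$ is done consistently so that the intermediate neighbourhood produced for $S_N$ really does sit inside the target preimage for $f$. The summability hypothesis $\sum_n m_n < \infty$ is exactly what guarantees the tail can be made uniformly smaller than $\varepsilon/4$ independently of $x$, which is the crux that lets the finite-sum result transfer to the infinite sum; without uniform control the sandwiching inclusions would fail.
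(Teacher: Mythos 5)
Your proposal is correct and follows essentially the same route as the paper's proof: reduce via Lemma \ref{easyintervals} to the relation $f^{-1}([-m,b]) \prec f^{-1}([-m,b+\varepsilon))$, take a partial sum (neighbourhood continuous by Lemma \ref{addition}) whose tail is bounded by $\varepsilon/4$, and sandwich the preimages of $f$ between preimages of the partial sum using $\mathsf{(N2)}$ and $\mathsf{(N2')}$. The only difference is cosmetic (the paper uses $b+\varepsilon/2$ where you use $b+3\varepsilon/4$ for the intermediate interval), and both choices work.
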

\begin{proof}
By Lemma \ref{easyintervals} (since $[-m, m]$ is homeomorphic to $[0,1]$), it is enough to show that for any interval $[-m,b]$ in $\mathbb{R}$ and $\varepsilon >0$, that $f^{-1}([-m,b])$ has neighbourhood $f^{-1}([-m, b+\varepsilon))$ relative to $\prec$. Pick $M$ such that $\sum_{n=M}^{\infty} m_n < \varepsilon / 4$ and let $f' = \sum_{n=1}^{M-1} g_n$. Then $f'$ is neighbourhood continuous by Lemma \ref{addition}, so $f'^{-1}([-m,b + \varepsilon/4])$ has neighbourhood $f'^{-1}([-m, b+\varepsilon/2))$. But 
$$
f^{-1}([-m,b]) \subseteq f'^{-1}([-m,b + \varepsilon/4])
$$
and
$$
f'^{-1}([-m, b+\varepsilon/2)) \subseteq f^{-1}([-m, b+\varepsilon))
$$
from which we obtain the required result.
\end{proof}

\begin{Definition}
Let $X$ be a set and $\prec$ a neighbourhood operator. If $A$ is a subset of $X$, then the \textbf{induced neighbourhood operator} $\prec_A$ on subsets of $A$ is defined as follows: $S \prec_A T$ precisely when there exists a subset $T'$ of $X$ such that $S \prec T'$ as subsets of $X$ and $T = T' \cap A$. 
\end{Definition}

\begin{Observation}
If a neighbourhood operator $\prec$ on $X$ satisfies $\mathsf{(N0)}-\mathsf{(N4)}$, then so does the induced neighbourhood operator on any subset.
\end{Observation}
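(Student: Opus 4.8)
The plan is to verify each of the axioms $\mathsf{(N0)}$--$\mathsf{(N4)}$ for the induced operator $\prec_A$ directly from its definition, leaning on the corresponding axioms for $\prec$ on $X$ together with the derived axiom $\mathsf{(N2')}$. Throughout, recall that every pair $S, T$ appearing in a relation $S \prec_A T$ consists of subsets of $A$, and that a \emph{witness} for $S \prec_A T$ is a subset $T' \subseteq X$ with $S \prec T'$ and $T = T' \cap A$. First I would check that $\prec_A$ is a neighbourhood operator at all: if $S \prec_A T$ with witness $T'$, then $S \subseteq T'$ and $S \subseteq A$, so $S \subseteq T' \cap A = T$.

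For $\mathsf{(N0)}$, that is $S \prec_A A$ for all $S \subseteq A$, the witness $T' = X$ works, since $S \prec X$ by $\mathsf{(N0)}$ on $X$ and $X \cap A = A$. For $\mathsf{(N2)}$, given $S \prec_A T \subseteq U$ with $U \subseteq A$ and witness $T'$ for $S \prec_A T$, take the new witness $T' \cup U$: then $S \prec T' \cup U$ by $\mathsf{(N2)}$ on $X$, and $(T' \cup U) \cap A = T \cup U = U$. For $\mathsf{(N3)}$, if $S \prec_A T$ and $S' \prec_A T'$ have witnesses $\tilde T$ and $\tilde T'$, then $\tilde T \cup \tilde T'$ is a witness for $S \cup S' \prec_A T \cup T'$, by $\mathsf{(N3)}$ on $X$ and distributivity of intersection over union.

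The one genuinely delicate axiom is $\mathsf{(N1)}$, because the complement taken inside $A$ is not the complement taken inside $X$. Given $S \prec_A T$ with witness $T'$, I would first rewrite $A \setminus T = A \cap (X \setminus T')$. Applying $\mathsf{(N1)}$ on $X$ to $S \prec T'$ gives $X \setminus T' \prec X \setminus S$, and since $A \setminus T = A \cap (X \setminus T') \subseteq X \setminus T'$, axiom $\mathsf{(N2')}$ yields $A \setminus T \prec X \setminus S$. As $S \subseteq A$ we have $(X \setminus S) \cap A = A \setminus S$, so $X \setminus S$ is a witness for $A \setminus T \prec_A A \setminus S$, as required.

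Finally, for $\mathsf{(N4)}$, suppose $S \prec_A U$ with witness $U'$, so $S \prec U'$ and $U = U' \cap A$. Applying $\mathsf{(N4)}$ on $X$ produces $V$ with $S \prec V \prec U'$; set $T = V \cap A$. Then $V$ witnesses $S \prec_A T$, while $U'$ witnesses $T \prec_A U$ once we observe, via $\mathsf{(N2')}$, that $T = V \cap A \subseteq V \prec U'$ gives $T \prec U'$. Hence $S \prec_A T \prec_A U$. The main obstacle is precisely the verification of $\mathsf{(N1)}$: recognising that $A \setminus T = A \cap (X \setminus T')$ and then using the monotonicity axiom $\mathsf{(N2')}$ to transport the neighbourhood relation from $X$ down to $A$ is what makes the mismatched complements behave; the remaining axioms are routine intersection computations.
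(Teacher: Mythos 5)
Your proof is correct, and it fills in exactly the routine witness-chasing verification that the paper omits entirely (the statement is given as an Observation with no proof). The key points — using $\mathsf{(N2')}$ (which follows from $\mathsf{(N0)}$--$\mathsf{(N3)}$ on $X$) to handle the mismatch between complements in $A$ and in $X$ for $\mathsf{(N1)}$, and taking $V \cap A$ as the intermediate set for $\mathsf{(N4)}$ — are precisely what is needed, so nothing is missing.
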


\begin{Theorem}[Tietze Extension Theorem for neighbourhood operators]\label{neighTietze}
Let $X$ be a set and $\prec$ a neighbourhood operator satisfying $\mathsf{(N0)}-\mathsf{(N3)}$. Then $\prec$ satisfies $\mathsf{(N4)}$ if and only if for any function $f: A \rightarrow [-2, 2]$ from a subset $A$ of $X$ which is neighbourhood continuous with respect to the operator induced by $\prec$ on $A$ and the topological neighbourhood operator on $[-2,2]$, there is a neighbourhood continuous function $g: X \rightarrow [-2,2]$ which extends $f$. 
\end{Theorem}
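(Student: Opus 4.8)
The plan is to imitate the classical proof of the Tietze Extension Theorem, with the iterated appeals to Urysohn's Lemma replaced by Theorem \ref{neighurysohn} and the uniform-convergence step replaced by Lemma \ref{neighlemmaforTietze}. Recall that the induced operator $\prec_A$ on a subset $A$ inherits $\mathsf{(N0)}$--$\mathsf{(N4)}$ from $\prec$, so neighbourhood continuity of $f$ on $A$ is with respect to a well-behaved operator. I treat the two implications separately: $\mathsf{(N4)} \Rightarrow$ extension by building $g$ as a rapidly summable series of rescaled Urysohn functions, and extension $\Rightarrow \mathsf{(N4)}$ by deducing condition (2) of Theorem \ref{neighurysohn}.

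For the forward direction, assume $\prec$ satisfies $\mathsf{(N4)}$ and let $f\colon A\to[-2,2]$ be neighbourhood continuous with respect to $\prec_A$. I set $f_1=f$, $M_n=2(2/3)^{n-1}$, so each $f_n\colon A\to[-M_n,M_n]$, and put $P_n=f_n^{-1}([-M_n,-M_n/3])$ and $Q_n=f_n^{-1}([M_n/3,M_n])$. Since $[-M_n,-M_n/3]$ has the topological neighbourhood $[-M_n,M_n/3)$ in $[-M_n,M_n]$, neighbourhood continuity of $f_n$ gives $P_n\prec_A f_n^{-1}([-M_n,M_n/3))=A\setminus Q_n$; unwinding the definition of $\prec_A$ produces $T'\subseteq X$ with $P_n\prec T'$ and $T'\cap A=A\setminus Q_n$, whence $T'\subseteq X\setminus Q_n$ and $\mathsf{(N2)}$ yields $P_n\prec X\setminus Q_n$ in $(X,\prec)$. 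Theorem \ref{neighurysohn} then provides a neighbourhood continuous separating function, which after an affine rescaling becomes $g_n\colon X\to[-M_n/3,M_n/3]$ equal to $-M_n/3$ on $P_n$ and to $M_n/3$ on $Q_n$. A direct estimate gives $|f_n-g_n|\le (2/3)M_n=M_{n+1}$ on $A$, so I set $f_{n+1}=f_n-g_n|_A$, which is again neighbourhood continuous with respect to $\prec_A$ (the restriction $g_n|_A$ is neighbourhood continuous by $\mathsf{(N2')}$, and the difference by Lemma \ref{addition}), and the induction proceeds. Finally $|g_n|\le M_n/3$ with $\sum_n M_n/3=2<\infty$, so Lemma \ref{neighlemmaforTietze} makes $g=\sum_n g_n\colon X\to[-2,2]$ neighbourhood continuous, while $|f-\sum_{i=1}^n g_i|\le M_{n+1}\to 0$ on $A$ forces $g|_A=f$.

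For the converse, assume the extension property and verify condition (2) of Theorem \ref{neighurysohn}, which then gives $\mathsf{(N4)}$. Given $A\prec X\setminus B$, let $S=A\cup B$ and define $\phi\colon S\to[0,1]\subseteq[-2,2]$ by $\phi\equiv 0$ on $A$ and $\phi\equiv 1$ on $B$. Using Lemma \ref{easyintervals} one checks $\phi$ is neighbourhood continuous with respect to $\prec_S$: every required instance collapses to $A\prec_S A$, and this holds because $A\prec X\setminus B$ lets us take $T'=X\setminus B$ in the definition of $\prec_S$ (indeed $(X\setminus B)\cap S=A$). Viewing $\phi$ as a map into $[-2,2]$ via the continuous, hence neighbourhood continuous, inclusion, the hypothesis extends it to a neighbourhood continuous $g\colon X\to[-2,2]$; composing with the continuous retraction $[-2,2]\to[0,1]$ (neighbourhood continuous by Proposition \ref{neighcont}) yields a neighbourhood continuous Urysohn function separating $A$ and $B$, as required.

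The main obstacle is the descent step in the forward direction: transferring the separation of the level sets $P_n$ and $Q_n$, which is known only through the induced operator $\prec_A$ on $A$, into the genuine relation $P_n\prec X\setminus Q_n$ in $(X,\prec)$ needed to apply Theorem \ref{neighurysohn} on all of $X$. The remaining ingredients --- the geometric-series bookkeeping, the affine rescalings, and termwise neighbourhood continuity --- are routine once Lemmas \ref{addition} and \ref{neighlemmaforTietze} are in hand.
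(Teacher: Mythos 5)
Your proposal is correct and follows essentially the same route as the paper's proof: the level sets $P_n,Q_n$ at the one-third marks, the descent from $\prec_A$ to $\prec$ via the definition of the induced operator and $\mathsf{(N2)}$, Theorem \ref{neighurysohn} plus affine rescaling for each approximant, Lemma \ref{neighlemmaforTietze} for the sum, and the two-valued function on $A\cup(X\setminus N)$ for the converse. You merely spell out a few details the paper leaves implicit (neighbourhood continuity of restrictions and differences in the induction, and the codomain adjustments via retraction/inclusion), which is fine.
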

\begin{proof}
The proof follows the classical topological proof closely. Suppose $\prec$ satisfies $\mathsf{(N4)}$.  

\textbf{Claim:} Given a neighbourhood continuous map $f: A \rightarrow [-3m, 3m]$, $m > 0$, there is a neighbourhood continuous map $g: X \rightarrow  [-m, m]$ such that $|f(x) - g(x)| \leq 2m$ for all $a \in A$. 

\textbf{Proof of Claim:} Let $S = f^{-1}([-3m, -m])$ and $T = f^{-1}([m, 3m])$. Since $f$ is neighbourhood continuous, we have $S \prec_A A \setminus T$. It follows from the definition of $\prec_A$ and condition $\mathsf{(N2)}$ that $S \prec X \setminus T$, so by Theorem \ref{neighurysohn}, there is a neighbourhood continuous map $g': X \rightarrow [0, 1]$ such that $g'(S) \subseteq \{0\}$ and $g'(T) \subseteq \{1\}$. Composing with the appropriate linear map $[0,1] \rightarrow [-m, m]$, we obtain the required map $g$. This proves the claim.

Now, define $m(n) = 2^{n+1}/3^n$ for $n \geq 0$. Using the Claim, inductively construct  a sequence of functions $g_n: X \rightarrow [-m(n), m(n)]$ which are neighbourhood continuous and such that for all $a \in A$, 
$$
|f(a) - \sum\limits_{i=1}^{n+1} g_i(a)| \leq 2m(n).
$$  
Then the map $g = \sum_{i=1}^\infty g_i$ is neighbourhood continuous by Lemma \ref{neighlemmaforTietze} and agrees with $f$ on $A$. 

For the other direction, note that if $A \prec N$, then the function which sends $A$ to $0$ and $X \setminus N$ to $1$ is neighbourhood continuous on $A \cup (X \setminus N)$. Thus by Theorem \ref{neighurysohn} we have the result.

\end{proof}

\begin{Corollary}[Tietze Extension Theorem]
Let $X$ be a normal topological space and let $A$ be a closed subset of $X$. Then any continuous function $f: A \rightarrow [0,1]$ extends to a continuous function $g: X \rightarrow [0,1]$. 
\end{Corollary}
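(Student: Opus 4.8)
The plan is to deduce this from Theorem \ref{neighTietze} applied to the topological neighbourhood operator $\prec$ on $X$, using Proposition \ref{neighcont} to pass back and forth between continuity and neighbourhood continuity. First I would note that since $X$ is normal, the Lemma characterising normality in terms of $\mathsf{(N4)}$ shows that $\prec$ satisfies $\mathsf{(N4)}$, while $\prec$ satisfies $\mathsf{(N0)}$--$\mathsf{(N3)}$ by the Observation recording that all three standard operators satisfy these axioms. Thus $\prec$ is an operator of exactly the type to which Theorem \ref{neighTietze} applies.

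The crux is to identify, for the closed subset $A$, the induced neighbourhood operator $\prec_A$ with the topological neighbourhood operator $\prec^A$ of $A$ given the subspace topology. I would check both containments for subsets $S, T \subseteq A$. If $S \prec_A T$, there is $T' \subseteq X$ with $S \prec T'$ and $T = T' \cap A$; choosing an open $U$ with $\mathsf{cl}(S) \subseteq U \subseteq T'$, the relatively open set $U \cap A$ witnesses $S \prec^A T$. Conversely, given $S \prec^A T$, pick an open $U \subseteq X$ with $\mathsf{cl}_A(S) \subseteq U \cap A \subseteq T$; here I would use that $A$ is closed, so that $\mathsf{cl}(S) = \mathsf{cl}_A(S) \subseteq U$, whence $S \prec U$, and then $T' = U \cup T$ satisfies $S \prec T'$ and $T' \cap A = T$, giving $S \prec_A T$. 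It is precisely this second direction that uses the hypothesis that $A$ is closed, and I expect this identification to be the main (albeit routine) obstacle; the rest is bookkeeping.

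With this in hand, since $[0,1]$ and $[-2,2]$ are $T_1$ and affinely homeomorphic, it suffices to extend the rescaled map $\tilde{f} = 4f - 2 : A \to [-2,2]$. By Proposition \ref{neighcont} applied to the subspace $A$, continuity of $\tilde{f}$ is equivalent to neighbourhood continuity with respect to $\prec^A = \prec_A$ and the topological operator on $[-2,2]$, so $\tilde{f}$ is neighbourhood continuous in the sense required by Theorem \ref{neighTietze}. That theorem then produces a neighbourhood continuous extension $\tilde{g} : X \to [-2,2]$ of $\tilde{f}$, where neighbourhood continuity is taken with respect to $\prec$ and the topological operator on the codomain. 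Finally, Proposition \ref{neighcont} applied to $X$ shows $\tilde{g}$ is continuous, and undoing the rescaling yields a continuous extension $g = (\tilde{g} + 2)/4 : X \to [0,1]$ of $f$, completing the proof.
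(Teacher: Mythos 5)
Your proof is correct and follows essentially the same route as the paper: reduce to Theorem \ref{neighTietze} by comparing the induced operator $\prec_A$ on the closed subset $A$ with the topological neighbourhood operator of the subspace topology, which is exactly where closedness of $A$ enters. The only differences are cosmetic: the paper proves just the single containment it needs (the subspace topological operator is contained, as a relation, in $\prec_A$), whereas you establish the full identification of the two operators, and you also spell out the affine rescaling between $[0,1]$ and $[-2,2]$ that the paper leaves implicit.
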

\begin{proof}
In order to apply Theorem \ref{neighTietze}, we have only to show that the function $f$ is continuous with respect to the neighbourhood operator $\prec_A$ induced by the topological neighbourhood operator on $X$. Suppose $S$ and $T$ are subsets of $A$, and that the closure of $S$ in $A$ is contained in a subset $V \subseteq T$ which is open in $A$. Since $A$ is closed, the closure of $S$ in $A$ coincides with the closure of $S$ in $X$. Let $V'$ be an open set in $X$ such that $V' \cap A = V$. Thus the closure of $S$ (in $X$) is contained in $V'$ which is contained in $T \cup X \setminus A$, so $S \prec_A T$. This shows that the topological neighbourhood operator associated to the subspace topology is contained (as a relation) in $\prec_A$, which gives the required result.
\end{proof}

We also obtain a result for hybrid large scale spaces. Note that if $A$ is a subset of an ls-space $X$, then the coarse neighbourhood operator induced by the subspace ls-structure on $A$ coincides with the neighbourhood operator induced on $A$ by the coarse neighbourhood operator on $X$. 

\begin{Corollary}[Tietze Extension Theorem for hybrid large scale spaces]\label{TietzeTheorem}
Let $X$ be an hls-space. Then $X$ is hls-normal if and only if for any closed subset $A$ of $X$, any continuous slowly oscillating function $f: A \rightarrow [0,1]$ extends to a continuous slowly oscillating function $g: X \rightarrow [0,1]$.
\end{Corollary}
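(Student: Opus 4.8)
The plan is to deduce this from the neighbourhood-operator Tietze theorem (Theorem \ref{neighTietze}) using the dictionary already set up in the paper, proving the two implications separately. Throughout, let $\prec$ denote the hybrid neighbourhood operator on $X$, and recall that by the Proposition characterising continuous slowly oscillating maps, a map $X\to[0,1]$ is continuous and slowly oscillating exactly when it is neighbourhood continuous with respect to $\prec$. The linking fact we will need repeatedly is Lemma \ref{neighN4hyb}: $X$ is hls-normal if and only if $\prec$ satisfies $\mathsf{(N4)}$.

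For the $(\Leftarrow)$ direction I would avoid the induced operator entirely and argue through Urysohn's Lemma. By Corollary \ref{UrysohnLemmaInHLS} it suffices to verify condition (3): that any two closed, disjoint, coarsely separated subsets $A,B$ can be separated by a continuous slowly oscillating function. Given such $A$ and $B$, form the closed set $A\cup B$ and the function $f$ on it sending $A$ to $0$ and $B$ to $1$. Since $A$ and $B$ are disjoint closed sets, each is clopen in $A\cup B$, so $f$ is continuous; and since $A$ and $B$ are coarsely separated, for any uniformly bounded $\mathcal{V}$ the union of those $V\in\mathcal{V}$ meeting both $A$ and $B$ lies in the weakly bounded set $\st(A,\mathcal{V})\cap\st(B,\mathcal{V})$, which forces $f$ to be slowly oscillating. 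Applying the extension hypothesis to $f$ yields a continuous slowly oscillating $g:X\to[0,1]$ with $g(A)\subseteq\{0\}$ and $g(B)\subseteq\{1\}$, which is exactly (3); hence $X$ is hls-normal.

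For the $(\Rightarrow)$ direction I would invoke the abstract machinery. If $X$ is hls-normal then $\prec$ satisfies $\mathsf{(N4)}$ by Lemma \ref{neighN4hyb}, so Theorem \ref{neighTietze} provides extensions of every map $A\to[-2,2]$ that is neighbourhood continuous for the induced operator $\prec_A$. It therefore remains to check that a continuous slowly oscillating $f:A\to[0,1]$ on a closed $A$ is neighbourhood continuous for $\prec_A$. First observe that the subspace $A$ (with $\{U\cap A\}$ as an open scale) is itself an hls-space, so by the characterising Proposition $f$ is neighbourhood continuous for the hybrid operator $\prec^{A}$ of the subspace. The key comparison is then $\prec^{A}\subseteq\prec_A$ for closed $A$: given $S\prec^{A}T$, use the remark that the subspace coarse operator equals the induced coarse operator to get $T_2\subseteq X$ with $S$ a coarse neighbourhood of $T_2$ and $T=T_2\cap A$; use closedness ($\mathsf{cl}_A(S)=\mathsf{cl}_X(S)$) and write an open-in-$A$ witness of the topological part as $W'\cap A$; then $T':=T_2\cup(W'\setminus A)$ satisfies $T'\cap A=T$, is a coarse neighbourhood of $S$ by $\mathsf{(N2)}$, and is a topological neighbourhood of $S$ in $X$ since $W'$ is an open set with $\mathsf{cl}_X(S)\subseteq W'\subseteq T'$. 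Thus $S\prec T'$ with $T'\cap A=T$, i.e.\ $S\prec_A T$. This gives $\prec_A$-neighbourhood continuity of $f$; Theorem \ref{neighTietze} extends $f$ to a neighbourhood continuous $g:X\to[-2,2]$, and composing with the Lipschitz retraction $[-2,2]\to[0,1]$ lands us in $[0,1]$ while preserving continuity and slow oscillation, producing the desired extension.

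The main obstacle is precisely the comparison $\prec^{A}\subseteq\prec_A$ on closed subsets: one must fuse a topological neighbourhood computed inside $A$ and a coarse neighbourhood computed inside $A$ into a \emph{single} hybrid neighbourhood in $X$ whose trace on $A$ is the prescribed set. Closedness of $A$ is what makes the topological gluing work (it is what lets us replace $\mathsf{cl}_A(S)$ by $\mathsf{cl}_X(S)$ and enlarge the witness by $W'\setminus A$ without disturbing the coarse part), and the remark identifying induced and subspace coarse operators is what handles the large-scale half. Everything else is a routine translation through the already-established equivalences.
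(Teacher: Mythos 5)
Your proof is correct and takes essentially the route the paper intends: Corollary \ref{TietzeTheorem} is stated there without a written proof, as an immediate assembly of Lemma \ref{neighN4hyb}, Theorem \ref{neighTietze}, the proposition identifying continuous slowly oscillating maps with hybrid-neighbourhood continuous ones, and the remark that induced and subspace coarse operators agree; your comparison $\prec^{A}\subseteq\prec_{A}$ for closed $A$ supplies exactly the fusion detail the paper leaves implicit (mirroring its handling of the topological operator in the proof of the classical Tietze corollary), and your backward direction via Corollary \ref{UrysohnLemmaInHLS} matches how Theorem \ref{neighTietze} itself proves that implication. The only blemish is the trivial slip where you wrote ``$S$ a coarse neighbourhood of $T_2$'' with the roles reversed ($T_2$ is the coarse neighbourhood of $S$), as your later appeal to $\mathsf{(N2)}$ makes clear.
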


\begin{Corollary}
 Given a metric space $X$, any bounded continuous slowly oscillating function on a closed subset of $X$ to $\mathbb{R}$ extends to a bounded continuous slowly oscillating function on the whole of $X$ to $\mathbb{R}$.
 \end{Corollary}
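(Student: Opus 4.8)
The plan is to reduce to the $[0,1]$-valued case already supplied by the hybrid large scale Tietze Extension Theorem. Since $X$ is a metric space, Proposition \ref{C0metricnormality} tells us that $X$, with the metric ls-structure and metric topology, is hls-normal. Hence Corollary \ref{TietzeTheorem} applies: every continuous slowly oscillating function from a closed subset of $X$ into $[0,1]$ extends to a continuous slowly oscillating function on all of $X$ into $[0,1]$. What remains is simply to bridge the gap between the target $[0,1]$ and a bounded subset of $\mathbb{R}$.

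Given a bounded continuous slowly oscillating $f: A \to \mathbb{R}$ with $A$ closed, I would pick $M > 0$ with $f(A) \subseteq [-M, M]$ and compose with the affine homeomorphism $\phi: [-M,M] \to [0,1]$, $\phi(t) = (t+M)/(2M)$, to obtain $\phi \circ f: A \to [0,1]$. The key observation is that $\phi$ is Lipschitz, and post-composing a slowly oscillating map with a Lipschitz map again yields a slowly oscillating map: since $\diam(\phi(f(U))) \le \Lip(\phi)\cdot\diam(f(U))$ for any set $U$, the weakly bounded set witnessing slow oscillation of $f$ at scale $\varepsilon/\Lip(\phi)$ also witnesses slow oscillation of $\phi\circ f$ at scale $\varepsilon$. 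Continuity is likewise preserved by composition with the continuous map $\phi$. Thus $\phi \circ f$ is a continuous slowly oscillating function $A \to [0,1]$.

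Applying Corollary \ref{TietzeTheorem} to $\phi\circ f$ produces a continuous slowly oscillating extension $h: X \to [0,1]$. I then set $g = \phi^{-1}\circ h: X \to [-M,M]$. Since $\phi^{-1}$ is again affine, hence Lipschitz, the same argument shows $g$ is continuous and slowly oscillating; it is bounded by construction, and for $a \in A$ we have $g(a) = \phi^{-1}(h(a)) = \phi^{-1}(\phi(f(a))) = f(a)$, so $g$ extends $f$, as required.

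There is essentially no hard obstacle here, since the substance is carried entirely by Proposition \ref{C0metricnormality} and Corollary \ref{TietzeTheorem}. The only point I would take care to state explicitly — and the one place where something must actually be checked — is that composition with a Lipschitz (or more generally uniformly continuous) map preserves slow oscillation; this is precisely what lets us pass freely between the codomains $[0,1]$ and $[-M,M]$ in both directions.
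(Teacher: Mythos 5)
Your proposal is correct and follows the paper's own route exactly: invoke Proposition \ref{C0metricnormality} to get hls-normality of the metric space and then apply Corollary \ref{TietzeTheorem}. The paper's proof is a single sentence that leaves the affine rescaling between $[-M,M]$ and $[0,1]$ implicit; your explicit check that Lipschitz post-composition preserves slow oscillation simply fills in that routine step.
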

 \begin{proof}
We have already seen that metric spaces are hls-normal as hls-spaces, so the result follows from Corollary \ref{TietzeTheorem}.
\end{proof}
The purely large scale version of the above result is just Theorem \ref{DMtheorem}. 

Neighbourhood operators can also be applied to obtain results for small scale/uniform spaces. We will use the definition of uniform space in terms of covers introduced by Tukey \cite{Tukey} (see also \cite{Isbell}), which is equivalent to the original definition in terms of entourages introduced by Weil \cite{Weil} and used in Chapter 2 of Bourbaki's book on general topology \cite{Bourbaki}. A \textbf{uniform space} is a set $X$ equipped with a collection $\mathcal{S}$ of covers of $X$ (which we call ``uniform covers'') satisfying the following axioms:
\begin{itemize}
\item $\{X\}$ is in $\mathcal{S}$,
\item If $\st(\mathcal{U}, \mathcal{U})$ refines $\mathcal{V}$ and $\mathcal{U}$ is in $\mathcal{S}$, then $\mathcal{V}$ is also in $\mathcal{S}$,
\item if $\mathcal{U}$ and $\mathcal{V}$ are elements of $\mathcal{S}$, then there exists an element $\mathcal{W}$ of $\mathcal{S}$ such that $\st(\mathcal{W}, \mathcal{W})$ refines both $\mathcal{U}$ and $\mathcal{V}$.
\end{itemize}

Note the apparent duality with the notion of large scale space. Indeed, uniform spaces are also called \textbf{small scale spaces} in the literature. For a more formal investigation of the connections and duality between large and small scale structures, we refer the reader to \cite{AustinThesis} and \cite{ADH}. A map $f: X \rightarrow Y$ from a uniform space $X$ to a uniform space $Y$ is called \textbf{uniformly continuous} if for every uniform cover $\mathcal{V}$ of $Y$, $f^{-1}(\mathcal{V}) = \{f^{-1}(V) \mid V \in \mathcal{V} \}$ is a uniform cover of $X$. Metric spaces such as $\mathbb{R}$ carry a natural uniform structure consisting of all covers which have positive Lebesgue number. For compact metric spaces, this is just the set of all covers which are refined by an open cover.

\begin{Lemma}
Let $X$ be a uniform space and $f: X \rightarrow [0,1]$ a function. Then $f$ is uniformly continuous if and only if it is neighbourhood continuous with respect to the uniform neighbourhood operator on $X$ and the topological neighbourhood operator on $[0,1]$.
\end{Lemma}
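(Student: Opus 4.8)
The plan is to prove both implications by reducing everything to statements about preimages of intervals and exploiting the compactness of $[0,1]$. First I would record the facts about uniform covers that the argument uses: finitely many uniform covers admit a common uniform refinement (immediate from the star-refinement axiom, since a $\mathcal{W}$ with $\st(\mathcal{W},\mathcal{W})$ refining $\mathcal{U}_1$ and $\mathcal{U}_2$ already refines both, then induct); any cover coarsened by a uniform cover is itself uniform (if $\mathcal{U}$ refines $\mathcal{V}$ and $\mathcal{U}$ is uniform, pick uniform $\mathcal{W}$ with $\st(\mathcal{W},\mathcal{W})$ refining $\mathcal{U}$, hence refining $\mathcal{V}$, and apply the second axiom); and every uniform cover of the compact space $[0,1]$ has a positive Lebesgue number. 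I would also verify that the uniform neighbourhood operator satisfies $\mathsf{(N0)}$--$\mathsf{(N3)}$, the only non-routine point being $\mathsf{(N1)}$, which follows by passing to a star-refinement of the witnessing uniform cover. This legitimises invoking Lemma \ref{easyintervals}.

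For the forward implication, suppose $f$ is uniformly continuous. By Lemma \ref{easyintervals} it suffices to show $f^{-1}([0,a]) \prec f^{-1}([0,b))$ for every $a < b$. I would take a uniform cover $\mathcal{V}$ of $[0,1]$ all of whose members have diameter less than $b-a$, for instance the cover by $\delta$-balls with $\delta < (b-a)/2$, which has positive Lebesgue number and so is uniform. Then $\mathcal{U} = f^{-1}(\mathcal{V})$ is a uniform cover of $X$, and any $f^{-1}(V) \in \mathcal{U}$ meeting $f^{-1}([0,a])$ has $V$ meeting $[0,a]$ with $\diam V < b-a$, forcing $V \subseteq [0,b)$ and hence $f^{-1}(V) \subseteq f^{-1}([0,b))$. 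Therefore $\st(f^{-1}([0,a]), \mathcal{U}) \subseteq f^{-1}([0,b))$, which is precisely the required neighbourhood relation.

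For the reverse implication, suppose $f$ is neighbourhood continuous and let $\mathcal{V}$ be an arbitrary uniform cover of $[0,1]$; I must show $f^{-1}(\mathcal{V})$ is uniform. Let $\delta > 0$ be a Lebesgue number for $\mathcal{V}$ and choose closed intervals $I_1, \dots, I_k$ covering $[0,1]$, each of length less than $\delta/3$, together with open enlargements $O_j \supseteq I_j$ of diameter less than $\delta$; each $O_j$ then lies in some $V_j \in \mathcal{V}$. Since $I_j \prec O_j$ for the topological neighbourhood operator on $[0,1]$, neighbourhood continuity yields uniform covers $\mathcal{U}_j$ of $X$ with $\st(f^{-1}(I_j), \mathcal{U}_j) \subseteq f^{-1}(O_j) \subseteq f^{-1}(V_j)$. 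Taking a common uniform refinement $\mathcal{U}$ of $\mathcal{U}_1, \dots, \mathcal{U}_k$, I would argue that $\mathcal{U}$ refines $f^{-1}(\mathcal{V})$: any $U \in \mathcal{U}$ contains a point $x$ with $f(x) \in I_j$ for some $j$, and since $\mathcal{U}$ refines $\mathcal{U}_j$ there is $U_j' \in \mathcal{U}_j$ with $U \subseteq U_j'$; as $U_j'$ meets $f^{-1}(I_j)$, we get $U \subseteq U_j' \subseteq f^{-1}(V_j)$. Thus $\mathcal{U}$ is a uniform cover coarsening $f^{-1}(\mathcal{V})$, so $f^{-1}(\mathcal{V})$ is uniform and $f$ is uniformly continuous.

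I expect the reverse direction to be the main obstacle: the difficulty is upgrading the purely local, interval-by-interval neighbourhood data into a single uniform cover of $X$ refining $f^{-1}(\mathcal{V})$. This is exactly where compactness of $[0,1]$ is indispensable, since it supplies a positive Lebesgue number and reduces the problem to finitely many intervals $I_j$, allowing the finitely many uniform covers $\mathcal{U}_j$ to be amalgamated; without finiteness a common uniform refinement need not exist.
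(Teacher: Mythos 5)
Your proof is correct and takes essentially the same route as the paper: both directions exploit compactness of $[0,1]$ (Lebesgue numbers and finite subdivisions into small intervals), apply neighbourhood continuity finitely many times, and take a common uniform refinement — the paper's reverse direction merely uses a chain of initial segments $[0,t_k]$ with $\st(f^{-1}([0,t_k]),\mathcal{U}_k)\subseteq f^{-1}([0,t_{k+1}))$ and concludes that $f$-images of the common refinement have diameter at most $\varepsilon$, rather than your enlarged intervals $O_j\supseteq I_j$ and direct refinement of $f^{-1}(\mathcal{V})$. One small caution: in the paper's terminology ``$\mathcal{U}$ coarsens $\mathcal{V}$'' means $\mathcal{V}$ refines $\mathcal{U}$, so your phrases ``cover coarsened by a uniform cover'' and ``$\mathcal{U}$ is a uniform cover coarsening $f^{-1}(\mathcal{V})$'' invert the convention — what you actually (correctly) proved is that the uniform cover $\mathcal{U}$ refines $f^{-1}(\mathcal{V})$, which is exactly what the star-refinement axiom needs.
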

\begin{proof}
$(\Rightarrow)$ is easy to check using Lemma \ref{easyintervals}.

$(\Leftarrow)$ Let $\varepsilon > 0$. Choose a finite number of points $t_1, \ldots, t_n$ in $[0,1]$ such that $0 < t_{k+1} - t_k < \varepsilon/2$ for all $1\leq k \leq n-1$. By neighbourhood continuity, for each of the intervals $[0, t_i]$, we have a uniform cover $\mathcal{U}_n$ of $X$ such that $\st(f^{-1}([0, t_n]), \mathcal{U}_n) \subseteq f^{-1}([0, t_{n+1}))$. Taking a common refinement $\mathcal{V}$ of the $\mathcal{U}_n$, we have that $\mathsf{diam}(V) \leq \varepsilon$ for every $V \in \mathcal{V}$ as required.
\end{proof}

One can check that for a subset $A$ of a uniform space $X$, the uniform neighbourhood operator on $A$ induced by the subspace uniform structure on $A$ coincides with the neighbourhood operator induced by the uniform neighbourhood operator on $X$. Thus we recover the result of Katetov below. 
\begin{Corollary}[Katetov \cite{Katetov}]\label{uniformTietze}
Let $X$ be a uniform space and $A \subseteq X$ a subspace. Then any uniformly continuous function $f: X\rightarrow [0,1]$ extends to a uniformly continuous function $g$ on the whole of $X$.
\end{Corollary}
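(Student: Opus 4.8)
The plan is to apply the Tietze Extension Theorem for neighbourhood operators (Theorem \ref{neighTietze}) to the uniform neighbourhood operator $\prec$ on $X$, so that the whole argument reduces to checking that $\prec$ satisfies the hypotheses of that theorem, namely $\mathsf{(N0)}$--$\mathsf{(N4)}$. Conditions $\mathsf{(N0)}$--$\mathsf{(N3)}$ hold because $\prec$ is one of our standing examples of a neighbourhood operator, and their verification is routine from the definition of a uniform cover. The only substantive point is $\mathsf{(N4)}$, and this is precisely where the star-refinement axiom of a uniform space enters.

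To verify $\mathsf{(N4)}$, I would suppose $A \prec C$, so that there is a uniform cover $\mathcal{U}$ with $\st(A, \mathcal{U}) \subseteq C$. Using the third uniform-space axiom, choose a uniform cover $\mathcal{W}$ such that $\st(\mathcal{W}, \mathcal{W})$ refines $\mathcal{U}$, and set $B = \st(A, \mathcal{W})$. Then $A \prec B$ is witnessed by $\mathcal{W}$ itself. For $B \prec C$, it suffices to show $\st(B, \mathcal{W}) \subseteq C$: if $W' \in \mathcal{W}$ meets $\st(A, \mathcal{W})$, then $W'$ meets some $W \in \mathcal{W}$ with $W \cap A \neq \varnothing$, so $W' \subseteq \st(W, \mathcal{W})$; since $\st(W, \mathcal{W})$ is an element of $\st(\mathcal{W}, \mathcal{W})$ it is contained in some $U \in \mathcal{U}$, and this $U$ meets $A$ (as $W \subseteq U$), whence $W' \subseteq U \subseteq \st(A, \mathcal{U}) \subseteq C$. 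Therefore $\st(\st(A, \mathcal{W}), \mathcal{W}) \subseteq C$, which gives $B \prec C$ via $\mathcal{W}$, and hence $A \prec B \prec C$. This is the heart of the proof and the step I expect to be the main obstacle, since it is the only place the uniform structure is used nontrivially; everything else is translation through the dictionary already set up.

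With $\mathsf{(N4)}$ established, the conclusion is bookkeeping. By the preceding Lemma, a map to $[0,1]$ is uniformly continuous if and only if it is neighbourhood continuous with respect to $\prec$ and the topological neighbourhood operator; and by the remark immediately before the corollary, the neighbourhood operator induced by $\prec$ on the subspace $A$ coincides with the uniform neighbourhood operator of the subspace uniform structure. Hence a uniformly continuous $f : A \to [0,1]$ is neighbourhood continuous with respect to the operator induced by $\prec$ on $A$. Transporting the target interval $[0,1]$ to $[-2,2]$ by the affine homeomorphism, which preserves the topological neighbourhood operator, Theorem \ref{neighTietze} produces a neighbourhood continuous extension $g : X \to [0,1]$, and a final application of the Lemma shows that $g$ is uniformly continuous, as required.
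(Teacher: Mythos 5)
Your proposal is correct and follows essentially the same route as the paper: reduce to Theorem \ref{neighTietze} by combining the lemma identifying uniform continuity with neighbourhood continuity and the observation that the operator induced on $A$ coincides with the uniform neighbourhood operator of the subspace structure. Your explicit star-refinement verification of $\mathsf{(N4)}$ for the uniform neighbourhood operator is precisely the step the paper leaves implicit, and you carry it out correctly.
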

\begin{proof}
It is enough to show that the uniform neighbourhood operator always satisfies the axiom $\mathsf{(N4)}$. Suppose we have $N, M \subseteq X$ with $N \prec M$ with respect to the uniform neighbourhood operator. Then there is a uniform cover $\mathcal{U}$ in $X$ such that $\st(N, \mathcal{U}) \subseteq M$. By the definition of uniform structure, there is a uniform cover $\mathcal{V}$ such that $\st(\mathcal{V}, \mathcal{V})$ refines $\mathcal{U}$. Then $N \prec \st(N, \mathcal{V}) \prec \st(\st(N, \mathcal{V}), \mathcal{V}) \subseteq M$ as required.
\end{proof}

We should mention some similarities between the work in \cite{Katetov} and the approach to extension theorems via neighbourhood operators in this paper (which was developed independently with large scale spaces in mind). In \cite{Katetov}, Katetov proves a version of his insertion theorem for abstract relations on sets and functions preserving them, with two key examples of such relations being what we call the topological and uniform neighbourhood operators in this paper. From this he is able to obtain the (topological) Katetov-Tong Theorem (Theorem 1 in \cite{Katetov}), as well as the result for uniform spaces given above. In the corrections to \cite{Katetov}, the author notes that some axioms are needed (on the relations) in order to prove the insertion theorem for relations. These axioms (found in Lemma 1 of the corrections) closely resemble axioms $\mathsf{(N3)}$ and $\mathsf{(N4)}$ given in this paper.

\section{The Higson compactification and corona}
The concept of Higson compactification really belongs to hybrid large scale geometry.
For completeness, let's prove the following result:
\begin{Proposition}
 Given a hybrid large scale space $X$ the following conditions are equivalent:
 \begin{itemize}
 \item[(1)] There is a Hausdorff compactification $h(X)$ of $X$ with the property that every continuous slowly oscillating function $f:X\to [0,1]$ extends uniquely over $h(X)$,
\item[(2)]  $X$ is Tychonoff as a topological space.
 \end{itemize}
\end{Proposition}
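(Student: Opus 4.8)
The plan is to recognise this proposition as the statement that the Higson compactification exists precisely when $X$ admits \emph{any} Hausdorff compactification, i.e.~precisely when $X$ is Tychonoff. The implication (1) $\Rightarrow$ (2) is then immediate and uses nothing about slow oscillation: a Hausdorff compactification realises $X$ as a dense subspace of a compact Hausdorff space, and every subspace of a compact Hausdorff (hence Tychonoff) space is itself Tychonoff.

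For (2) $\Rightarrow$ (1) I would build $h(X)$ by the standard evaluation-into-a-cube construction. Let $\mathcal{F}$ be the set of all continuous slowly oscillating functions $X \to [0,1]$, form the evaluation map $e\colon X \to [0,1]^{\mathcal{F}}$ given by $e(x) = (f(x))_{f \in \mathcal{F}}$, and set $h(X) = \mathsf{cl}(e(X))$, which is a compact Hausdorff space. By the standard fact that such an evaluation map is a topological embedding as soon as $\mathcal{F}$ separates points and separates points from closed sets, it suffices to verify this separation property; granting it, $h(X)$ is a Hausdorff compactification of $X$, each $f \in \mathcal{F}$ extends over $h(X)$ as the restriction of the coordinate projection $\pi_f$, and the extension is unique because $e(X)$ is dense in $h(X)$ and $[0,1]$ is Hausdorff.

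The heart of the matter, and the step I expect to be the main obstacle, is producing enough slowly oscillating continuous functions to achieve this separation, since \emph{a priori} slow oscillation is a large-scale constraint with no evident link to the small-scale topology. The key observation that unlocks everything is: any continuous $g\colon X \to [0,1]$ that vanishes outside a bounded set $B$ is slowly oscillating. Indeed, given a uniformly bounded family $\mathcal{U}$ and $\varepsilon > 0$, the set $K = B$ is bounded (hence weakly bounded) and any $U \in \mathcal{U}$ with $U \cap B = \varnothing$ satisfies $g(U) = \{0\}$, so $\mathsf{diam}(g(U)) = 0 < \varepsilon$. Now fix a point $x_0$ and a closed set $C$ with $x_0 \notin C$. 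Using an open scale $\mathcal{U}$ of the hls-space, choose $U \in \mathcal{U}$ with $x_0 \in U$; since every element of a uniformly bounded open cover is bounded, $U$ is a bounded open neighbourhood of $x_0$. By complete regularity of $X$ pick continuous $g_1, g_2 \colon X \to [0,1]$ with $g_1(x_0) = 1$, $g_1|_{X \setminus U} = 0$ and $g_2(x_0) = 1$, $g_2|_C = 0$, and put $f = g_1 g_2$. Then $f$ is continuous, vanishes outside the bounded set $U$ (hence is slowly oscillating by the observation), and satisfies $f(x_0) = 1$ while $f(C) = \{0\}$, so $f \in \mathcal{F}$ separates $x_0$ from $C$. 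Taking $C = \{y\}$ for $y \neq x_0$, which is closed since $X$ is Hausdorff, also separates points.

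With separation in hand, the remaining verifications are routine: $e$ is an embedding, $h(X)$ is a Hausdorff compactification of $X$, and the coordinate projections furnish the required unique extensions. It is worth emphasising that the only place the hybrid structure is genuinely used is in guaranteeing a bounded \emph{open} neighbourhood of each point, which is exactly the compatibility axiom built into the definition of an hls-space; this is what lets the large-scale notion of slow oscillation interact with the topology at all.
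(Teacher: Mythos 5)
Your proof is correct and follows essentially the same route as the paper: the crucial observation in both is that any continuous function vanishing outside a bounded (open) set is automatically slowly oscillating, so by complete regularity the family of continuous slowly oscillating functions separates points from closed sets. The only difference is that you spell out the standard evaluation-into-a-cube construction, whereas the paper simply invokes well-known compactification theory at that point.
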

\begin{proof}
 The implication (1)$\implies$(2) holds for any space $X$ that admits a Hausdorff compactification. 
 To show (2)$\implies$(1) first observe that, given any $x_0\in X$ and given a bounded open neighbourhood $U$ of $x_0$ in $X$, any continuous function $f:X\to [0,1]$ that vanishes outside of $U$ is slowly oscillating. Thus the set of continuous slowly oscillating functions on $X$ separates closed sets from points. It is easy to check that the collection of continuous slowly oscillating functions from $X$ to $[0,1]$ is a subring of the ring of continuous functions from $X$ to $[0,1]$ that is complete with respect to the sup-norm and contains the constant functions. Thus (1) follows from well-known results in compactification theory (see for example Theorem (m) in Section 4.5 of \cite{PorterWoods}).
\end{proof}

In \cite{Roe} (pp.~30--31) the Higson corona of a coarse space $X$ is defined abstractly as a compact space $\nu X$ satisfying
$$C(\nu X)=\frac{B_h(X)}{B_0(X)}.$$ 
Here $B_h(X)$ is the C$^\ast$-algebra of all bounded slowly oscillating (not necessarily continuous) complex-valued functions and $B_0(X)$ is the closed two-sided ideal of functions that ``approach $0$ at infinity'', i.e.~all $f \in B_h(X)$ such that for every $\varepsilon >0$ there is a bounded set $K$ such that $|f(x)| < \varepsilon$ for all $x \notin K$.  It is shown that the geometric realization of the Higson corona, in the case of a (paracompact) proper coarse space, can be obtained as $h(X)\setminus X$, where $h(X)$ is the Higson compactification of $X$, i.e. the compactification corresponding to the algebra of all continuous bounded slowly oscillating functions $X\to [0,1]$.

In case of arbitrary hybrid large scale spaces we can talk about two ways of defining the Higson corona: one as above (using $B_h(X)/B_0(X)$) and the other using continuous slowly oscillating functions, that is via the formula
$$C(\nu X)=\frac{B^c_h(X)}{B^c_0(X)}.$$ 
where $B^c_h$ and $B^c_0$ are the subalgebras of continuous functions in $B_h$ and $B_0$ respectively. One purpose of this section is to show that for normal hls-spaces these definitions are equivalent. There is a natural homomorphism  $\frac{B^c_h(X)}{B^c_0(X)}\to  \frac{B_h(X)}{B_0(X)}$ induced by the inclusion of $B^c_h$ into $B_h$; what we are interested in is when that homomorphism is an isomorphism. 
 
\begin{Theorem}
 If $X$ is hls-normal as a hybrid ls-space, then the natural homomorphism  $\alpha: \frac{B^c_h(X)}{B^c_0(X)}\to  \frac{B_h(X)}{B_0(X)}$ is an isomorphism.
\end{Theorem}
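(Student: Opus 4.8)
The plan is to show that $\alpha$ is both injective and surjective. Injectivity is immediate from the definitions: if $f \in B^c_h(X)$ represents a class in $\ker\alpha$, then $f \in B_0(X)$, and since $f$ is continuous this means $f \in B^c_0(X)$, so its class already vanishes. Thus $\ker\alpha$ is trivial and all the content lies in surjectivity. For that, I would first invoke the standard fact that a $*$-homomorphism of C$^\ast$-algebras has closed range; since both quotients are C$^\ast$-algebras and $\alpha$ is a $*$-homomorphism, it suffices to prove that the range of $\alpha$ is dense. Unwinding the quotient norm on $B_h(X)/B_0(X)$, which is $\inf_K \sup_{x\notin K}|\cdot|$ over bounded sets $K$, this reduces to the following approximation statement: for every bounded slowly oscillating $f\colon X\to\mathbb{C}$ and every $\varepsilon>0$ there is a continuous slowly oscillating $g$ with $|f-g|<\varepsilon$ off some bounded set. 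Splitting into real and imaginary parts and rescaling, I may assume $f\colon X\to[0,1]$, and since slow oscillation is detected on coarse components I may work on one coarse component at a time.

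To build $g$, fix an open scale $\mathcal{V}$ and, using slow oscillation, a bounded set $K_0$ off which $f$ varies by less than $\varepsilon$ on each member of $\mathcal{V}$. Subdivide $[0,1]$ into $N\approx 1/\varepsilon$ steps $a_i=i\varepsilon$. By Proposition \ref{neighslow} the coarse level sets satisfy $f^{-1}([0,a_i])\prec f^{-1}([0,a_{i+1}))$ for the coarse operator, and combining the estimate $\mathsf{cl}(A)\subseteq\st(A,\mathcal{V})$ from Lemma \ref{FirstLemma} with slow oscillation shows that the closures $\mathsf{cl}(f^{-1}([0,a_i]))$ stay within one $\varepsilon$-level of $f^{-1}([0,a_i])$ off $K_0$; hence they form a chain of closed coarse neighbourhoods. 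I would then upgrade this to a chain of closed hybrid neighbourhoods in which each $F_i$ is a hybrid neighbourhood of $F_{i-1}$ and which still tracks $f$ in the sense that, off a bounded set, $f^{-1}([0,a_{i-1}])\subseteq F_i\subseteq f^{-1}([0,a_{i+1}])$. Applying the hybrid Urysohn Lemma (Corollary \ref{UrysohnLemmaInHLS}) to each consecutive pair yields continuous slowly oscillating maps $u_i\colon X\to[0,1]$ with $u_i(F_{i-1})\subseteq\{0\}$ and $u_i(X\setminus F_i)\subseteq\{1\}$, and I set $g=\varepsilon\sum_{i=1}^N u_i$. A finite sum of continuous slowly oscillating functions is continuous, bounded and slowly oscillating, and evaluating $g$ on a shell $F_j\setminus F_{j-1}$ gives $g\in[a_{j-1},a_j]$ while the tracking forces $f\in(a_{j-2},a_{j+1}]$ off the bounded set, so $|f-g|\le 3\varepsilon$ there, as required for density.

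The main obstacle is precisely the upgrade from the coarse-neighbourhood chain to a hybrid-neighbourhood chain. The closures of the level sets of a merely slowly oscillating (hence possibly everywhere discontinuous) $f$ are coarse neighbourhoods of one another but need \emph{not} be topological neighbourhoods, so the $F_i$ cannot simply be taken to be level sets. I would produce the topological thickening by starring with the open scale $\mathcal{V}$: the set $\mathsf{cl}(\st(\,\cdot\,,\mathcal{V}))$ contains an open neighbourhood of its argument, and crucially, by slow oscillation it only reaches points whose $f$-value exceeds the current level by less than $\varepsilon$, so the thickening does not destroy the level tracking. The discrepancy this introduces on a bounded set is then reconciled using the topological normality of $X$, in exactly the manner of the proof of Theorem \ref{BigNormalityIssue} (cut along the star of a bounded set and patch with a closed topological neighbourhood supplied by normality).

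It is this last step where the decomposition \emph{hls-normality $=$ topological normality $+$ ls-normality}, and with it the compatibility axiom, enters essentially: the coarse structure alone gives the level tracking, the topology alone gives the thickening into genuine neighbourhoods, and normality of the topology lets one glue the two over bounded sets without disturbing the behaviour at infinity. I expect the bookkeeping of the index shifts and the bounded exceptional sets in this reconciliation to be the only genuinely delicate part; everything else is the abstract C$^\ast$-algebra reduction together with the extension machinery already developed for hls-normal spaces.
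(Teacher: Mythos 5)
Your proposal is correct, but it follows a genuinely different route from the paper's. The paper never passes through density or closed ranges: observing that $\alpha$ has trivial kernel, it proves the exact splitting $B_h(X)=B^c_h(X)+B_0(X)$ outright. Given $f\in B_h(X)$ and an open scale $\mathcal{U}$, it takes a maximal subset $A$ no two of whose points lie in a common element of $\st(\mathcal{U},\mathcal{U})$ (a coarse skeleton), transports the values of $f$ on $A$ to a function on the closed set $\mathsf{cl}(A)$ (automatically continuous and slowly oscillating there), extends it over $X$ by the hybrid Tietze theorem (Corollary \ref{TietzeTheorem}) to get $g\in B^c_h(X)$, and verifies $f-g\in B_0(X)$ directly: at infinity both $f$ and $g$ oscillate by less than $\varepsilon$ on elements of $\st(\mathcal{U},\mathcal{U})$, every point shares such an element with a skeleton point, and $f=g$ on the skeleton. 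Thus hls-normality enters once, through Tietze, and no C$^\ast$-theory is needed. Your staircase-plus-Urysohn argument also works, and your diagnosis of the delicate step is accurate, but that step is easier than you anticipate: topological normality and the cut-and-paste of Theorem \ref{BigNormalityIssue} are not needed. Indeed, with $L_i=f^{-1}([0,a_i])$ and $F_{i-1}=\mathsf{cl}(\st(L_{i-1},\mathcal{V}))$, slow oscillation yields a bounded set $B$ with $F_{i-1}\setminus B\subseteq\st(L_i,\mathcal{V})$, so $F_i:=\mathsf{cl}(\st(L_i,\mathcal{V}))\cup\mathsf{cl}(\st(B,\mathcal{V}))$ is closed, contains the open set $\st(L_i,\mathcal{V})\cup\st(B,\mathcal{V})\supseteq F_{i-1}$, and differs from $\mathsf{cl}(\st(L_i,\mathcal{V}))$ only by a bounded set (Lemma \ref{FirstLemma}); bounded perturbations affect neither the coarse-neighbourhood property nor the level tracking, so only the compatibility axiom is used here, and hls-normality is invoked exactly once, for the Urysohn functions of Corollary \ref{UrysohnLemmaInHLS}. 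One caveat, which you share with the paper's own proof: slow oscillation produces weakly bounded exceptional sets while $B_0$ is defined with bounded ones, so the bookkeeping as written is exact only for coarsely connected $X$ (or after reading $B_0$ with weakly bounded sets). In the end, the paper's skeleton-plus-Tietze argument is shorter and yields the stronger conclusion $B_h=B^c_h+B_0$; yours trades that for elementary Urysohn-type separation at the cost of the C$^\ast$-algebraic closed-range fact and heavier bookkeeping.
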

\begin{proof} 
Since $\alpha$ has trivial kernel, it is enough to show that $B_h = B_0 + B^c_h$. Let $f \in B_h$ and let $\mathcal{U}$ be an open scale. Let $A$ be a subset of $X$ which is maximal with the property that no two elements of $A$ are in the same element of $\st(\mathcal{U}, \mathcal{U})$. Then $A$ is a discrete subset of $X$, and no element of $x$ belongs to the closure of more than one element of $A$. Define a map $f'$ from $\mathsf{cl}(A)$ to $[0,1]$ which sends $a' \in A$ to $f(a)$, where $a'$ is in closure of $\{a\}$. Then one checks that $f'$ is slowly oscillating and continuous. By Theorem \ref{TietzeTheorem}, we can extend $f'$ to a continuous slowly oscillating function $g$ on all of $X$. It remains to show that $g-f$ is in $B_0$. Indeed, let $\varepsilon >0$. Then for some bounded set $K$, $\{a,b\} \in U \in \st(\mathcal{U}, \mathcal{U})$ implies that $|g(a) - g(b)| < \varepsilon/2$ and $|f(a) - f(b)| < \varepsilon/2$. Since every element of $X$ is in the same element of $\st(\mathcal{U}, \mathcal{U})$ as some element of $A$, we have that $|f(x) - g(x)| < \varepsilon$ for every $x \notin K$.
\end{proof}

\begin{Proposition}
 Suppose $X$ is a hybrid large scale space whose topology is Tychonoff. Then $X$ is hls-normal if and only if, for each closed subset $Y$ of $X$, its closure $\overline{Y}$ in the Higson compactification $h(X)$ is the Higson compactification of $Y$.
\end{Proposition}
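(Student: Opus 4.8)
The plan is to reduce both directions to the Tietze Extension Theorem for hls-spaces (Corollary \ref{TietzeTheorem}) together with standard compactification theory. Recall that, since $X$ is Tychonoff, the Higson compactification $h(X)$ is characterized (by the Proposition above) as the compactification over which every continuous slowly oscillating function $X \to [0,1]$ extends uniquely; equivalently, $h(X)$ corresponds to the closed subalgebra of $C_b(X)$ generated by the continuous slowly oscillating functions. A closed subset $Y$ of $X$ is again Tychonoff and, with the subspace hls-structure, has its own Higson compactification $h(Y)$. The closure $\overline{Y}$ of $Y$ in $h(X)$ is a Hausdorff compactification of $Y$ (here one uses that $X$ embeds in $h(X)$ and that $Y$ is closed, so $\overline{Y} \cap X = Y$), and it corresponds to the closed subalgebra generated by the restrictions to $Y$ of the continuous slowly oscillating functions on $X$.

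First I would record the elementary fact that the restriction $f|_Y$ of a continuous slowly oscillating function $f$ on $X$ to a closed subset $Y$ is again continuous and slowly oscillating for the subspace structure: continuity is immediate, and for the oscillation condition one checks that a weakly bounded witness $K$ in $X$ yields the weakly bounded witness $K \cap Y$ in $Y$, using that a subset of $Y$ which is bounded in $X$ is bounded in $Y$. This shows that the subalgebra defining $\overline{Y}$ is contained in the subalgebra defining $h(Y)$, so by the usual correspondence between compactifications and closed subalgebras (see \cite{PorterWoods}) there is a canonical surjection $\pi: h(Y) \to \overline{Y}$ restricting to the identity on $Y$. The whole proposition then amounts to deciding when $\pi$ is a homeomorphism, that is, when these two subalgebras coincide.

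For the forward direction, assume $X$ is hls-normal. By Corollary \ref{TietzeTheorem}, every continuous slowly oscillating function on $Y$ is the restriction of one on $X$, so the two subalgebras are literally equal and $\pi$ is a homeomorphism; thus $\overline{Y} = h(Y)$. (Equivalently, one separates two distinct points of $h(Y)$ by the extension of some continuous slowly oscillating $f$ on $Y$, extends $f$ over $X$ by hls-normality, and pushes the extension down to $h(X)$ to separate their images under $\pi$.) For the converse, assume $\overline{Y} = h(Y)$ for every closed $Y$. To verify hls-normality it suffices, by Corollary \ref{TietzeTheorem}, to extend an arbitrary continuous slowly oscillating $f: Y \to [0,1]$ over $X$. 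I would first extend $f$ to $\tilde{f}: h(Y) \to [0,1]$ by the universal property, then regard $\tilde{f}$ as a continuous function on the closed subset $\overline{Y} = h(Y)$ of the compact Hausdorff (hence normal) space $h(X)$, and apply the classical Tietze Extension Theorem to obtain a continuous $F: h(X) \to [0,1]$. The restriction $g = F|_X$ extends continuously over $h(X)$, so by the universal property of $h(X)$ it is continuous and slowly oscillating, and because $Y$ is closed (so $\overline{Y} \cap X = Y$) we have $g|_Y = \tilde{f}|_Y = f$, which is the required extension.

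The main obstacle I expect is the converse: one must pass from the mere equality of compactifications to an honest extension of functions, and the clean way to do this is to extend inside the compact space $h(X)$ using classical Tietze and then pull back to $X$. The points needing care are that the restriction to $X$ of a continuous function on $h(X)$ is genuinely slowly oscillating (which is precisely the universal property established above for Tychonoff $X$) and that the closedness of $Y$ guarantees $\overline{Y} \cap X = Y$, so that the extension really restricts to $f$ on $Y$. Setting up the canonical surjection $\pi$ correctly---in particular verifying that restrictions of slowly oscillating functions remain slowly oscillating---is the other routine but essential ingredient.
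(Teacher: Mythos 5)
Your proof is correct and follows essentially the same route as the paper: the forward direction extends continuous slowly oscillating functions from $Y$ to $X$ via Corollary \ref{TietzeTheorem} and then over $h(X)$, and the converse extends over $\overline{Y}=h(Y)$ and then over $h(X)$ by the classical Tietze Extension Theorem before restricting back to $X$. The subalgebra correspondence and the canonical surjection $\pi$ are just a more explicit packaging of the universal-property argument the paper uses directly.
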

\begin{proof}
The Higson compactification of a closed subset $Y$ of $X$ is completely characterized by the fact that any continuous slowly oscillating complex-valued function on $Y$ extends uniquely to $hY$. If $X$ is hls-normal, then any continuous slowly oscillating function on $Y$ extends to the whole of $X$ by Corollary \ref{TietzeTheorem}, and hence to $hX$ and in particular, to $\overline{Y}$, the closure of $Y$ in $hX$. Uniqueness is easy to check. Conversely, if $\overline{Y}$ is the Higson compactification of $Y$ then any continuous bounded slowly oscillating complex-valued function $f$ on $Y$ extends to a continuous function on $\overline{Y} = hY$. By the classical Tietze Extension Theorem, this extends to a continuous function on $hX$, which when restricted to $X$ is a continuous bounded slowly oscillating function extending $f$.
\end{proof}

\begin{Proposition}
Let $X$ be an hls-space whose topology is Tychonoff. Then the following are equivalent, where for $Y \subseteq X$, $\overline{Y}$ denotes the closure of $Y$ in $hX$:
\begin{itemize}
\item[(1)] $X$ is hls-normal,
\item[(2)] two disjoint closed subsets $A$ and $B$ of $X$ are coarsely separated if and only if $\overline{A} \cap \overline{B} = \varnothing$.
\end{itemize} 
\end{Proposition}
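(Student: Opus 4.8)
The plan is to route everything through the Higson compactification $hX$, which exists and is compact Hausdorff (hence normal) precisely because $X$ is Tychonoff (by the characterization of $hX$ proved above), and which has the defining property that a continuous bounded function $X \to [0,1]$ extends continuously over $hX$ if and only if it is slowly oscillating. The bridge between the large-scale condition in (2) and the function-theoretic content of hls-normality supplied by Corollary \ref{UrysohnLemmaInHLS} will be the following auxiliary lemma, which I regard as the crux of the whole argument: \emph{if $A$ and $B$ are disjoint subsets of $X$ and there is a slowly oscillating $f: X \to [0,1]$ with $f(A) \subseteq \{0\}$ and $f(B) \subseteq \{1\}$, then $A$ and $B$ are coarsely separated.} Crucially, this implication needs no normality hypothesis.

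To prove the lemma I would fix a uniformly bounded family $\mathcal{U}$ (enlarged to a cover by adjoining singletons, which does not affect disjointness) and apply slow oscillation with $\varepsilon = 1/4$ to get a weakly bounded $K$ with $\diam(f(U)) < 1/4$ for every $U \in \mathcal{U}$ missing $K$. A short estimate then yields $\st(A, \mathcal{U}) \cap \st(B, \mathcal{U}) \subseteq \st(K, \mathcal{U})$: any point of the left side lies in some $U_1 \in \mathcal{U}$ meeting $A$ and some $U_2 \in \mathcal{U}$ meeting $B$, and if both $U_1$ and $U_2$ missed $K$ it would satisfy $f < 1/4$ and $f > 3/4$ simultaneously. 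The step I expect to be the most delicate is the bookkeeping verification that $\st(K, \mathcal{U})$ is itself weakly bounded: since every $U \in \mathcal{U}$ is bounded it lies in a single coarse component, so intersecting with a coarse component $C$ gives $\st(K, \mathcal{U}) \cap C \subseteq \st(K \cap C, \mathcal{U})$, which is bounded because $K \cap C$ is bounded and $\mathcal{U}$ is uniformly bounded. Being a subset of a weakly bounded set, $\st(A,\mathcal{U}) \cap \st(B,\mathcal{U})$ is weakly bounded, giving coarse separation.

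With the lemma in hand the rest is assembly. First I observe that the ``if'' half of (2), namely that $\overline{A} \cap \overline{B} = \varnothing$ forces $A$ and $B$ coarsely separated, holds unconditionally: normality of $hX$ produces a continuous $F: hX \to [0,1]$ separating the disjoint compacta $\overline{A}$ and $\overline{B}$, the restriction $f = F|_X$ is slowly oscillating by the characterization of $hX$, and the lemma finishes it. Hence (2) is equivalent to its ``only if'' half, and it remains to match that half with (1). For $(1) \Rightarrow (2)$: given coarsely separated disjoint closed $A,B$, Corollary \ref{UrysohnLemmaInHLS} supplies a continuous slowly oscillating $f$ with $f(A) \subseteq \{0\}$, $f(B) \subseteq \{1\}$; extending $f$ to $F$ on $hX$ and using continuity gives $F(\overline{A}) \subseteq \{0\}$ and $F(\overline{B}) \subseteq \{1\}$, whence $\overline{A} \cap \overline{B} = \varnothing$. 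For $(2) \Rightarrow (1)$: the ``only if'' half of (2) converts coarse separation of disjoint closed $A,B$ into $\overline{A} \cap \overline{B} = \varnothing$, normality of $hX$ then yields a separating $F$ and hence a separating slowly oscillating $f = F|_X$, so $X$ is hls-normal by Corollary \ref{UrysohnLemmaInHLS}. The only genuinely non-formal ingredient is the auxiliary lemma, and within it the verification that stars of weakly bounded sets stay weakly bounded.
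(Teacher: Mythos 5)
Your proof is correct and follows essentially the same route as the paper's: both directions are run through the Higson compactification, using normality of the compact Hausdorff space $hX$, the fact that restrictions of continuous functions on $hX$ to $X$ are slowly oscillating (and conversely that continuous slowly oscillating functions extend), and Corollary \ref{UrysohnLemmaInHLS}. The only difference is that you explicitly isolate and prove the auxiliary lemma that a slowly oscillating separating function forces coarse separation (via the $\varepsilon = 1/4$ star estimate and the check that stars of weakly bounded sets are weakly bounded), a step the paper's proof simply asserts with ``It follows that $A$ and $B$ are coarsely separated.''
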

\begin{proof}
(1)$\implies$ (2): Suppose $X$ is hls-normal. If $\overline{A} \cap \overline{B} = \varnothing$ then we can define a continuous map $f$ from $hX$ to $[0,1]$ which sends $\overline{A}$ to $0$ and $\overline{B}$ to $1$. The restriction of $f$ to $X$ is a slowly oscillating function sending $A$ to $0$ and $B$ to $1$. It follows that $A$ and $B$ are coarsely separated. If $A$ and $B$ are coarsely separated, then by Corollary \ref{UrysohnLemmaInHLS}, we can define a slowly oscillating function sending $A$ to $0$ and $B$ to $1$. Extending this to $hX$ we see that we must have $\overline{A} \cap \overline{B} = \varnothing$ as required. 

(2)$\implies$ (1): By Corollary \ref{UrysohnLemmaInHLS} it is enough to produce, for closed subsets $A$ and $B$ of $X$ such that $\overline{A} \cap \overline{B} = \varnothing$, a slowly oscillating continuous map $f$ sending $A$ to $0$ and $B$ to $1$. This can be accomplished by constructing a continuous map sending $\overline{A}$ to $0$ and $\overline{B}$ to $1$ and restricting to $X$. 
\end{proof}

Note that for proper metric spaces, condition (2) in the above proposition follows from Proposition 2.3 in \cite{Dranetal} and plays a crucial role in relating properties of a proper metric space with its Higson corona in various places in the literature (see for example \cite{AustinVirk} or \cite{Dranetal}),

\section{Hybrid structures induced by compactifications}

In this section we discuss hybrid ls-structures related to the work of
 Mine,   Yamashita, and  Yamauchi (see \cite{MY}, \cite{MYY}) who studied properties of the $C_0$-structure
on a locally compact metric space relative to a compact metric compactification. Our next definition generalises that concept.

\begin{Definition}
 Given a closed subset $A$ of a topological space $X$  with empty interior define the large scale structure $LS(X,A)$ on $X\setminus A$
 as follows: a family $\mathcal{U}$ of subsets of $X \setminus A$ is in $LS(X,A)$ if  and only if for each open neighbourhood $U$ of any $a\in A$ in $X$ there is an open neighbourhood
 $V$ of $a$ in $U$ such that $W\in \mathcal{U}$ and $W\cap V\ne \emptyset$ implies $W\subset U$.
\end{Definition}

It is easy to check that this indeed defines an ls-structure. Note that the bounded sets in $X \setminus A$ equipped with the ls-structure $LS(X,A)$ are precisely the subsets of $X \setminus A$ whose closure does not intersect $A$. 

\begin{Proposition}
 Given a closed subset $A$ of a topological space $X$ with empty interior and given a continuous
 function $f:X\setminus A\to Y$ to a complete metric space $Y$, consider the following statements:
 \begin{enumerate}
 \item  $f$ extends continuously over $X$,
\item  $f$ is slowly oscillating with respect to the large scale structure $LS(X,A)$ on $X\setminus A$.
 \end{enumerate}
It is always the case that $(1) \Rightarrow (2)$ and, if each point of $A$ has a countable basis of neighbourhoods and $X$ is Hausdorff, then $(2) \Rightarrow (1)$.
\end{Proposition}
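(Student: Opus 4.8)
The plan is to treat the two implications separately, leaning on the explicit description of $LS(X,A)$ and of its bounded sets (a subset of $X\setminus A$ is bounded iff its closure in $X$ misses $A$).

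For $(1)\Rightarrow(2)$, suppose $f$ extends to a continuous map $\bar f\colon X\to Y$. Given a uniformly bounded family $\mathcal{U}\in LS(X,A)$ and $\varepsilon>0$, I would first use continuity of $\bar f$ to choose, for each $a\in A$, an open neighbourhood $U_a$ of $a$ in $X$ with $\mathsf{diam}(\bar f(U_a))<\varepsilon$. The defining property of $LS(X,A)$ then supplies, for each $a$, an open $V_a$ with $a\in V_a\subseteq U_a$ such that any $W\in\mathcal{U}$ meeting $V_a$ lies inside $U_a$. Put $V=\bigcup_{a\in A}V_a$ and $K=X\setminus V$. Since $V$ is open and contains $A$, the set $K\subseteq X\setminus A$ is closed in $X$ and disjoint from $A$, so $\mathsf{cl}(K)=K$ misses $A$ and $K$ is bounded, hence weakly bounded. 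Finally, any nonempty $W\in\mathcal{U}$ with $W\cap K=\varnothing$ satisfies $W\subseteq V$, hence meets some $V_a$ and thus $W\subseteq U_a$, giving $\mathsf{diam}(f(W))=\mathsf{diam}(\bar f(W))\le\mathsf{diam}(\bar f(U_a))<\varepsilon$. This is exactly slow oscillation, and no separation or first-countability hypotheses are needed here.

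For $(2)\Rightarrow(1)$ I would first record that, since $X$ is Hausdorff, every finite subset of $X\setminus A$ is closed and disjoint from $A$, hence bounded; thus $X\setminus A$ is coarsely connected and ``weakly bounded'' coincides with ``bounded''. The core of the proof is to show that at each $a\in A$ the oscillation of $f$ vanishes, i.e. $\inf_U\mathsf{diam}\big(f(U\cap(X\setminus A))\big)=0$ as $U$ ranges over neighbourhoods of $a$. Suppose not, so there is $\varepsilon>0$ and, using a decreasing countable neighbourhood basis $U_1\supseteq U_2\supseteq\cdots$ of $a$ (available by first countability, and nonempty off $A$ because $A$ has empty interior), points $x_n,y_n\in U_n\cap(X\setminus A)$ with $d(f(x_n),f(y_n))>\varepsilon/2$; both sequences converge to $a$. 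I would then verify that the family $\mathcal{U}=\{\{x_n,y_n\}\mid n\in\mathbb{N}\}$ lies in $LS(X,A)$ and apply slow oscillation to this $\mathcal{U}$ and $\varepsilon/2$: the resulting weakly bounded set $K$ must meet every pair $\{x_n,y_n\}$ (each pair has $f$-image of diameter $>\varepsilon/2$), so $K$ contains a point $k_n\in U_n$ for each $n$; as $k_n\to a$ we get $a\in\mathsf{cl}(K)$, making $K$ unbounded and contradicting that weakly bounded equals bounded. Once the oscillation vanishes at every $a\in A$, I would define $\bar f(a)=\lim_{x\to a}f(x)$: the vanishing oscillation makes the $f$-values along any approach to $a$ Cauchy, completeness of $Y$ produces the limit independently of the approach, and setting $\bar f=f$ on the open set $X\setminus A$ yields a continuous extension by the usual limit argument (continuity off $A$ is inherited from $f$, and continuity at points of $A$ follows from the vanishing oscillation there and at neighbouring points of $A$).

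I expect the main obstacle to be the verification that $\mathcal{U}=\{\{x_n,y_n\}\}$ is uniformly bounded, i.e. lies in $LS(X,A)$. For a neighbourhood $U$ of the distinguished point $a$ one takes $V=U_N\subseteq U$ for suitable $N$, so every pair with index $n\ge N$ is contained in $U_n\subseteq U$, while the finitely many pairs with $n<N$ not lying in $U$ must be kept out of $V$; this is possible because $X$ is Hausdorff (one separates $a$ from finitely many points). For a point $a'\in A$ with $a'\ne a$ one uses $x_n,y_n\to a\ne a'$, so Hausdorffness gives a neighbourhood of $a'$ meeting the pairs in only finitely many points, again excluded by shrinking. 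This bookkeeping, and the fact that it is precisely where Hausdorffness (and, for the basis, first countability) enters, is the delicate part; the remaining steps are routine.
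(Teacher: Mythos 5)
Your proposal is correct and follows essentially the same route as the paper: the same union-of-shrunken-neighbourhoods argument for $(1)\Rightarrow(2)$, and for $(2)\Rightarrow(1)$ the same reduction to vanishing oscillation at each $a\in A$, refuted via pairs $x_n,y_n\in U_n$ with $\varepsilon$-separated images, with Hausdorffness entering in exactly the same two places (handling points of $A$ other than $a$, and excluding finitely many pair-points). The only difference is organizational: you verify directly that $\{\{x_n,y_n\}\}$ lies in $LS(X,A)$ and then let slow oscillation force the weakly bounded set $K$ to accumulate at $a$, whereas the paper first uses slow oscillation to conclude that this family is \emph{not} uniformly bounded and then derives a contradiction from that non-membership --- the two arguments check the same facts.
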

\begin{proof}
$(1) \Rightarrow (2)$: Let $\mathcal{U}$ be an element of $LS(X, A)$ and let $\varepsilon > 0$. For each point $a \in A$, pick a open neighbourhood $V_a$ of $a$ such that $f(V_a)$ has diameter less than $\varepsilon$, and choose an open neighbourhood $W_a$ of $a$ inside $V_a$ such that for all $U\in \mathcal{U}$, $U \cap W_a \neq \varnothing \Rightarrow U \subseteq V_a$. Consider the union $W$ of all the $W_a$. Its complement is a closed subset of $X \setminus A$, hence bounded. Any set $U \in \mathcal{U}$ which intersects $(X \setminus A) \setminus W$ must be contained in an element of $V_a$, so that $f$ is slowly oscillating as required.

$(2) \Rightarrow (1):$ Suppose $f:X\setminus A\to Y$ is continuous and slowly oscillating.
The first issue is to construct an extension $g:X\to Y$ of $f$ and then to show its continuity.
The natural way to define $g(a)$ for $a\in A$ is as the only point belonging to the intersection
of all sets $cl(f(U))$, $U$ being a neighbourhood of $a$ in $X$.
Choose a decreasing sequence $\{U_n\}$ of neighbourhoods of $a$ in $X$.
The intersection
of all sets $cl(f(U_n))$, $n\ge 1$, consists of exactly one point if for each $\epsilon > 0$
there is $M > 0$ such that the diameter of $f(U_n)$ for $n > M$ is smaller than $\epsilon$. Suppose for contradiction that there is a sequence $x_n, y_n\in U_n$ so that $dist(f(x_n),f(y_n)) \ge \epsilon$. Since every bounded set in $X \setminus A$ is contained in some $X \setminus U_n$, the family $\{x_n,y_n\}_{n\ge 1}$ cannot be uniformly bounded because $f$ is slowly oscillating. By definition of the ls-structure $LS(X, A)$, there must exist a $b \in A$ and a neighbourhood $V$ of $b$ such that for every neighbourhood $V' \subseteq V$ of $b$, there is an $n$ for which $\{x_n, y_n\} \cap (X \setminus V)$ and $\{x_n, y_n\} \cap V'$ each have exactly one point. We claim that $a = b$. Indeed, if not, then since $X$ is Hausdorff, we can choose a neighbourhood of $b$ which contains none of the $x_n$, $y_n$, a contradiction. Suppose then that $U_k \subseteq V$. We can choose a neighbourhood $V' \subseteq U_k$ of $a$ such that $V'$ does not contain $x_i$ or $y_i$ for $i \leq k$. It follows that if $\{x_n, y_n\} \cap V' \neq \varnothing$, then $\{x_n, y_n\} \subseteq U_k \subseteq V$. This is a contradiction. Thus $f$ is well-defined, and its continuity is easy to show.
\end{proof}

\begin{Corollary}
 If $X$ is compact Hausdorff, $A$ is a closed subset of $X$ with empty interior whose every point has a countable basis of neighbourhoods
 in $X$, and $LS(X,A)$ is a hybrid large scale space when equipped with the topology induced from $X$,
 then the Higson compactification of $X\setminus A$ equipped with the ls-structure $LS(X,A)$ is exactly $X$.
\end{Corollary}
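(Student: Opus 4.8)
The plan is to identify $X$ with the Higson compactification of $Z := X\setminus A$ by showing that the two compactifications correspond to the same subalgebra of bounded continuous functions on $Z$. First I would record the basic set-up: since $A$ is closed with empty interior, $Z$ is dense in $X$, so that the compact Hausdorff space $X$ is a Hausdorff compactification of $Z$. Being a subspace of the (Tychonoff) space $X$, the space $Z$ is itself Tychonoff; together with the standing assumption that $Z$, equipped with $LS(X,A)$ and the induced topology, is an hls-space, this guarantees by the earlier Proposition that the Higson compactification $h(Z)$ exists and corresponds to the algebra $B^c_h(Z)$ of continuous bounded slowly oscillating functions on $Z$.

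The decisive input is the Proposition immediately preceding this Corollary. Under exactly our hypotheses---$X$ Hausdorff and each point of $A$ admitting a countable neighbourhood basis---it states that a continuous map from $Z$ to a complete metric space is slowly oscillating with respect to $LS(X,A)$ if and only if it extends continuously over $X$. Applying this with the complete metric codomains $\mathbb{R}$ and $\mathbb{C}$, I would deduce that a bounded continuous function on $Z$ extends continuously over $X$ precisely when it is slowly oscillating. In other words, the subalgebra of $C_b(Z)$ consisting of those functions that extend continuously to $X$ coincides with $B^c_h(Z)$.

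To conclude, I would appeal to the standard correspondence (see for example \cite{PorterWoods}) between Hausdorff compactifications of a Tychonoff space and the closed, constant-containing subalgebras of its bounded continuous functions which separate points from closed sets: a compactification is determined up to equivalence over $Z$ by the algebra of functions on $Z$ that extend continuously to it. Since the algebra attached to $X$ has just been identified with the algebra $B^c_h(Z)$ attached to $h(Z)$, the two compactifications agree, that is $h(Z)=X$. If one prefers the universal-property formulation of the earlier Proposition, the required uniqueness of continuous extensions of $[0,1]$-valued slowly oscillating functions follows at once from the density of $Z$ in $X$ and the Hausdorffness of the codomain.

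The mathematics proper is essentially all contained in the preceding Proposition, so I expect the only real work here to be bookkeeping: phrasing the identification of $X$ with $h(Z)$ so that compactification theory applies cleanly. The one point that deserves a line of care is that the preceding Proposition is invoked for all bounded continuous real- and complex-valued functions rather than only $[0,1]$-valued ones; this is harmless, since the sole property of the codomain used in that Proposition is completeness, which $\mathbb{R}$ and $\mathbb{C}$ possess.
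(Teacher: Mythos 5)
Your proposal is correct and is essentially the argument the paper intends: the Corollary is stated without proof precisely because it follows from the preceding Proposition (continuous extension over $X$ $\Leftrightarrow$ slowly oscillating, under the Hausdorff and countable-basis hypotheses) combined with the standard correspondence between Hausdorff compactifications of the Tychonoff space $X\setminus A$ and closed subalgebras of bounded continuous functions. Your bookkeeping (density of $X\setminus A$, boundedness from compactness of $X$, completeness of $\mathbb{R}$ and $\mathbb{C}$ as codomains) is exactly the right filling-in of that implicit derivation.
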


\begin{Proposition}
 If $X$ is a compact metric space and $A$ is a closed subset of $X$ with empty interior, then $LS(X,A)$ is a hybrid large scale space when equipped with the topology induced from $X$.
\end{Proposition}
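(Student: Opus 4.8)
The plan is to verify directly the compatibility axiom in the definition of an hls-space: by that definition it suffices to exhibit a single uniformly bounded open cover of $X \setminus A$ (an open scale), and Lemma \ref{FirstLemma} then gives nothing further to check. Fix a metric $d$ on $X$ compatible with its topology and set $\rho(x) = d(x,A)$. Since $A$ is closed, $\rho$ is continuous and vanishes exactly on $A$, so $\rho(x) > 0$ for every $x \in X \setminus A$. (If $A = \varnothing$ the single-element cover $\{X\}$ is trivially an open scale, so I assume $A \neq \varnothing$, which also forces $X \setminus A \neq \varnothing$ since $A$ has empty interior.)

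The candidate open scale will be the cover by balls whose radii are tied to the distance to $A$:
$$\mathcal{U} = \{\, B(x, \rho(x)/3) \mid x \in X \setminus A \,\}.$$
First I would check the easy structural properties: each ball $B(x,\rho(x)/3)$ is open in $X$ and is contained in $X \setminus A$ (if $d(q,x) < \rho(x)/3$ then $d(q,A) \geq \rho(x) - \rho(x)/3 = \tfrac{2}{3}\rho(x) > 0$), and $\mathcal{U}$ covers $X \setminus A$ since $x \in B(x,\rho(x)/3)$. Thus $\mathcal{U}$ is an open cover of $X \setminus A$, and it only remains to show $\mathcal{U} \in LS(X,A)$.

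The main step, and the only place requiring an estimate, is verifying membership in $LS(X,A)$. Given $a \in A$ and an open neighbourhood $U$ of $a$ in $X$, choose $r > 0$ with $B(a,r) \subseteq U$ and put $V = B(a, r/2)$. The content is to show that any $W = B(x, \rho(x)/3)$ meeting $V$ lies inside $B(a,r) \subseteq U$. This follows from two triangle-inequality estimates: picking $p \in W \cap V$, the inequality $\rho(x) \leq d(x,a) \leq d(x,p) + d(p,a)$ forces $\rho(x)$ to be small (bounded by a fixed multiple of $r$, here $\rho(x) < \tfrac{3}{4}r$), and then for any $q \in W$ the bound $d(q,a) \leq d(q,x) + d(x,p) + d(p,a) < \tfrac{2}{3}\rho(x) + \tfrac{r}{2} < r$ places $q$ in $B(a,r)$. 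Hence $W \subseteq U$, which is exactly the defining condition for $\mathcal{U} \in LS(X,A)$.

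I expect the choice of constants to be the only delicate point, and it is mild: the radius fraction $1/3$ and the neighbourhood $V = B(a,r/2)$ are chosen precisely so that a ball of $\mathcal{U}$ reaching into $V$ is already so small that it cannot escape $B(a,r)$. It is worth remarking that compactness of $X$ is not actually used in this argument — the construction goes through for any metric space $X$ and any closed $A$ with empty interior — which is consistent with the stated proposition. Once $\mathcal{U}$ is shown to be an open scale, $X \setminus A$ equipped with $LS(X,A)$ and the subspace topology is an hls-space by definition.
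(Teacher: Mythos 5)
Your proof is correct and is essentially the paper's own argument: the paper also takes the cover $\{B(x,d(x))\}_{x\in X\setminus A}$ with $d(x)$ half the distance from $x$ to $A$, differing from your construction only in the constant ($1/2$ versus $1/3$) and in leaving the triangle-inequality verification implicit. Your fleshed-out membership check in $LS(X,A)$, and the observation that compactness is never used, are accurate.
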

\begin{proof}
 Consider the family $\{B(x,d(x))\}_{x\in X\setminus A}$,
 where $d(x)$ is half the distance from $x$ to $A$. It is a scale in $LS(X,A)$.
 \end{proof}

\begin{Proposition}\label{XAcoarsesep}
 Suppose $X$ is a Hausdorff topological space, $A$ is a closed subset of $X$ with empty interior, and each point of $A$ has a countable
 basis of neighbourhoods in $X$. If $LS(X,A)$ is a hybrid large scale space when equipped with the topology induced from $X$,
 then two closed subsets $B$ and $C$ of $X\setminus A$ are coarsely disjoint if and only if their closures
 in $X$ are disjoint.
\end{Proposition}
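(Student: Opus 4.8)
The plan is to prove both implications directly from the definition of $LS(X,A)$, relying on the characterization recorded just after that definition: a subset of $X\setminus A$ is bounded if and only if its closure in $X$ misses $A$. I read ``coarsely disjoint'' as coarsely separated and take $B\cap C=\varnothing$; then, since $B$ and $C$ are closed in $X\setminus A$, any point of $\mathsf{cl}_X(B)\cap\mathsf{cl}_X(C)$ must lie in $A$ (a common point in $X\setminus A$ would belong to $B\cap C$). Since finite subsets of $X\setminus A$ have closure missing $A$, they are bounded, so $X\setminus A$ is coarsely connected and ``weakly bounded'' coincides with ``bounded''; thus coarse separation of $B$ and $C$ means exactly that $\st(B,\mathcal{U})\cap\st(C,\mathcal{U})$ is bounded for every scale $\mathcal{U}$.

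For the implication $\mathsf{cl}_X(B)\cap\mathsf{cl}_X(C)=\varnothing \Rightarrow$ coarsely separated, I fix a scale $\mathcal{U}\in LS(X,A)$ and show that $\mathsf{cl}_X\big(\st(B,\mathcal{U})\cap\st(C,\mathcal{U})\big)$ misses $A$. Given $a\in A$, disjointness of the closures lets me assume, say, $a\notin\mathsf{cl}_X(B)$, so there is an open $U\ni a$ in $X$ with $U\cap B=\varnothing$. Feeding $U$ into the defining property of $LS(X,A)$ produces an open $V$ with $a\in V\subseteq U$ such that every $W\in\mathcal{U}$ meeting $V$ lies in $U$; since such a $W$ lies in $U$ and $U\cap B=\varnothing$, no element of $\mathcal{U}$ meeting $V$ can meet $B$, whence $V\cap\st(B,\mathcal{U})=\varnothing$. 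Thus $a\notin\mathsf{cl}_X(\st(B,\mathcal{U}))$, and a fortiori $a$ lies outside the closure of the smaller set $\st(B,\mathcal{U})\cap\st(C,\mathcal{U})$. As $a\in A$ was arbitrary, this intersection is bounded, giving coarse separation. This direction needs neither the countable basis nor any separation axiom.

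For the converse I argue contrapositively: assuming $\mathsf{cl}_X(B)\cap\mathsf{cl}_X(C)$ contains a point $a\in A$, I build a scale witnessing failure of coarse separation. Using the countable neighbourhood basis, I choose a decreasing basis $\{U_n\}$ of open neighbourhoods of $a$ and points $b_n\in B\cap U_n$, $c_n\in C\cap U_n$, so that $b_n,c_n\to a$. I take $\mathcal{U}$ to consist of the pairs $W_n=\{b_n,c_n\}$ together with all singletons of $X\setminus A$. Granting that $\mathcal{U}\in LS(X,A)$, each $W_n$ meets both $B$ and $C$, so $W_n\subseteq\st(B,\mathcal{U})\cap\st(C,\mathcal{U})$; hence this intersection contains the sequence $\{b_n\}$, which accumulates at $a\in A$, so its closure meets $A$ and it is unbounded. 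Since the space is coarsely connected, unbounded means not weakly bounded, and therefore $B,C$ are not coarsely separated.

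The main obstacle is verifying that the family $\mathcal{U}$ just described actually belongs to $LS(X,A)$, i.e.\ that for each $a'\in A$ and each open $U'\ni a'$ there is an open $V'\ni a'$ inside $U'$ with the property that every member of $\mathcal{U}$ meeting $V'$ is contained in $U'$. The singletons are harmless. For the pairs, when $a'=a$ all but finitely many $W_n$ lie in $U'$ because $b_n,c_n\to a$, and the finitely many stray pairs are excluded by shrinking $V'$ to avoid their (finitely many) points; when $a'\ne a$, the convergence $b_n,c_n\to a$ together with a Hausdorff-type separation of $a'$ from $a$ forces only finitely many $W_n$ to come near $a'$, and again one shrinks $V'$ past the offending points. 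This last step is where the argument uses point-separation (a $T_1$/Hausdorff hypothesis on $X$, as in the preceding propositions), and it is also where the countable basis is essential, since it is what lets me extract the two convergent sequences that pin the construction to the single point $a$.
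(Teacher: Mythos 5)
Your proof is correct and is essentially the paper's proof: the same two arguments in the same roles. For the direction ``disjoint closures implies coarsely separated'' the paper argues by contradiction and first coarsens to an open scale, but the substance is identical to yours (apply the defining property of $LS(X,A)$ to the open set $X\setminus\mathsf{cl}_X(B)$ around a point $a\notin\mathsf{cl}_X(B)$); your observation that neither the coarsening nor openness of the scale is needed is accurate. For the converse the paper likewise picks $b_n\in B$, $c_n\in C$ converging to a point $a\in\mathsf{cl}_X(B)\cap\mathsf{cl}_X(C)\cap A$ and uses the family of pairs $\{b_n,c_n\}$; however, it merely asserts that this family is uniformly bounded in $LS(X,A)$, with no verification. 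Your verification of exactly this claim is the one place you go beyond the paper, and your diagnosis is right: a point-separation hypothesis is genuinely required, both to excise the finitely many stray pairs ($T_1$) and to handle base points $a'\neq a$ (Hausdorff-type separation of $a'$ from the tail of the sequences). Indeed, without such a hypothesis the proposition is false: take $X=\{a\}\cup\mathbb{N}$ with every singleton $\{k\}$, $k\in\mathbb{N}$, open and with $U_n=\{a,0\}\cup\{k\mid k\ge n\}$ a neighbourhood basis at the closed point $a$, and $A=\{a\}$; since every open set containing $a$ contains $0$, any member of any family in $LS(X,A)$ that contains $0$ must be contained in $\{0\}$, from which one checks that $B=\{0\}$ and $C=\{2,4,6,\dots\}$ are coarsely separated even though both of their closures contain $a$. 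So the extra hypothesis you flag is not a defect of your argument but a gap in the paper's statement and proof, which implicitly assume the separation present in all of the section's applications (compact Hausdorff or metric $X$).
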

\begin{proof}
 Suppose $B$ and $C$ are coarsely disjoint but $a\in A$ belongs to $cl(B)\cap cl(C)$.
 Pick sequences $b_n\in B$ and $c_n\in C$, both converging to $a$. We claim that
 $\mathcal{F}:=\{b_n,c_n\}_{n=1}^\infty$ is a uniformly bounded family in $LS(X,A)$. Indeed, let $d \in A$ have open neighbourhood $U$ in $X$. If $d = a$, then we can choose $N > 0$ such that $b_n \in U$ and $a_n \in U$ for all $n > N$. Using the fact that $X$ is Hausdorff, we can choose a smaller neighbourhood $V \subseteq U$ of $d$ which does not contain $a_i$ or $b_i$ for $i \leq N$. Thus if $F \in \mathcal{F}$ intersects $V$, it must be contained in $U$. If $a \neq d$, then we can use the fact that $X$ is Hausdorff to choose an open neighbourhood $V \subseteq U$ of $d$ which contains none of the $a_n$ or $b_n$, so that no element of $\mathcal{F}$ intersects $V$. Thus $\mathcal{F}$ is uniformly bounded. On the other hand, $st(B,\mathcal{F})\cap st(C,\mathcal{F})$ is not bounded because its closure contains $a$. This is a contradiction.
 
 Suppose $B$ and $C$ are closed in $X\setminus A$ and $cl(B)\cap cl(C)=\emptyset$.
 Since any scale of $LS(X,A)$ can be coarsened to an open scale, it suffices to show
 that $st(B,\mathcal{U})\cap st(C,\mathcal{U})$ is bounded for any open scale $\mathcal{U}$ of $LS(X,A)$.
 Suppose, on the contrary, that $a\in A$ belongs to the closure of $st(B,\mathcal{U})\cap st(C,\mathcal{U})$.
 Without loss of generality, we may assume $a\notin cl(B)$.
 Pick a neighbourhood $V$ of $a$ in $X\setminus cl(B)$ such that
 $U\cap V\ne\emptyset$, $U\in \mathcal{U}$, implies $U\subset X\setminus cl(B)$.
 Since $V\cap st(B,\mathcal{U})\ne\emptyset$, there is $U\in \mathcal{U}$
 intersecting both $B$ and $V$. But then $U\subset X\setminus cl(B)$, a contradiction.
\end{proof}

\begin{Corollary}
 Suppose $X$ is a normal topological space, $A$ is a closed subset of $X$ with empty interior, and each point of $A$ has a countable
 basis of neighbourhoods in $X$. If $LS(X,A)$ is a hybrid large scale space when equipped with the topology induced from $X$,
 then it is hls-normal.
 \end{Corollary}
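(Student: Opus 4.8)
The plan is to verify hls-normality of $X \setminus A$ (with the structure $LS(X,A)$ and the subspace topology) through the Urysohn-type characterisation of Corollary \ref{UrysohnLemmaInHLS} applied to the hls-space $X \setminus A$, rather than checking the defining neighbourhood condition directly. By the equivalence of (1) and (3) in that corollary, it suffices to show that any two disjoint closed subsets $B$ and $C$ of $X \setminus A$ that are coarsely separated with respect to $LS(X,A)$ can be separated by a continuous slowly oscillating function $f : X \setminus A \to [0,1]$ with $f(B) \subseteq \{0\}$ and $f(C) \subseteq \{1\}$.

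So I would fix such $B$ and $C$ and first pass to the ambient space $X$. Since $B$ and $C$ are disjoint, closed in $X \setminus A$, and coarsely separated, Proposition \ref{XAcoarsesep} applies -- its hypotheses, that each point of $A$ has a countable neighbourhood basis and that $LS(X,A)$ is a hybrid large scale space, are exactly the standing hypotheses of the corollary -- and it gives that the closures $\mathsf{cl}(B)$ and $\mathsf{cl}(C)$ taken in $X$ are disjoint. These are then disjoint closed subsets of the normal space $X$, so the classical Urysohn Lemma produces a continuous function $g : X \to [0,1]$ with $g(\mathsf{cl}(B)) \subseteq \{0\}$ and $g(\mathsf{cl}(C)) \subseteq \{1\}$.

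The second step is to restrict and check slow oscillation. Setting $f = g|_{X \setminus A}$, the map $f$ is continuous for the subspace topology and still sends $B$ to $0$ and $C$ to $1$. The only point needing justification is that $f$ is slowly oscillating with respect to $LS(X,A)$; but $f$ extends continuously over all of $X$ by construction (namely by $g$), so the implication $(1) \Rightarrow (2)$ of the extension Proposition preceding Proposition \ref{XAcoarsesep} -- which is asserted there to hold with no extra hypotheses -- shows precisely that $f$ is slowly oscillating. Hence $f$ is the required continuous slowly oscillating separating function, and Corollary \ref{UrysohnLemmaInHLS} yields hls-normality.

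I do not expect a genuine obstacle, since all the real work has been front-loaded into Proposition \ref{XAcoarsesep} (translating coarse separation into disjointness of closures in $X$) and into the extension proposition (translating continuous extendability into slow oscillation). The one point requiring care is the bookkeeping of hypotheses -- confirming that normality of $X$, countable neighbourhood bases at points of $A$, and the assumption that $LS(X,A)$ is genuinely an hls-space are exactly what is needed to invoke both auxiliary results -- together with the implicit identification of ``coarsely separated'' with the ``coarsely disjoint'' terminology of Proposition \ref{XAcoarsesep}. One could alternatively route the argument through Theorem \ref{BigNormalityIssue} by separately establishing topological normality of $X \setminus A$ and ls-normality, but the direct route via Corollary \ref{UrysohnLemmaInHLS} is shorter and sidesteps the subtlety that open subspaces of normal spaces need not themselves be normal.
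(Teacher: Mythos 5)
Your proposal is correct and follows essentially the same route as the paper: reduce to separating disjoint closed coarsely separated sets (condition (3) of Corollary \ref{UrysohnLemmaInHLS}), use Proposition \ref{XAcoarsesep} to get disjoint closures in $X$, separate them by a continuous function using normality of $X$, and conclude slow oscillation of the restriction from the implication $(1)\Rightarrow(2)$ of the extension proposition. The only cosmetic difference is that you invoke the classical Urysohn Lemma directly where the paper defines the function on $\mathsf{cl}(B)\cup\mathsf{cl}(C)$ and extends it by normality, and you make explicit two appeals (to Corollary \ref{UrysohnLemmaInHLS} and to the extension proposition) that the paper leaves implicit.
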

\begin{proof}
Suppose $B$ and $C$ are disjoint, closed, coarsely separated subsets of $X \setminus A$. By Proposition \ref{XAcoarsesep}, the closures of $B$ and $C$ in $X$ are disjoint. Thus the function $f$ from $cl(B) \cup cl(C) \subseteq X$ to $[0,1]$ which sends $cl(B)$ to $0$ and $cl(C)$ to $1$ is well-defined. Since it is continuous, it can be extended to the whole of $X$ by topological normality. Thus the restriction of $f$ to $X \setminus A$ is a slowly oscillating function, and sends $B$ to $0$ and $C$ to $1$.
  \end{proof}
  
 \section{Topological groups as hls-spaces} \label{secgroups}
Let $G$ be a group. Then $G$ admits a natural ls-structure given by all families of subsets $\mathcal{U}$ which refine a family of the form $\{ g \cdot F \mid g \in G\}$ for some finite subset $F$ \cite{DH}. If the group is finitely generated, then this ls-structure coincides with the one given by the word-length metric associated to any finite generating set (see for example \cite{NowakYu} for a definition of this metric). If the group is countable, then this ls-stucture coincides with the unique ls-structure which is induced by a proper left-invariant metric on the group \cite{JSmith}. For a subset $F$ of $G$, we denote the cover $\{g \cdot F \mid g \in G\}$ by $G(F)$. The following lemma gives an explicit formula for starring such covers.

\begin{Lemma}\label{starringingroup}
Let $E$ and $F$ be subsets of $G$. Then we have
\begin{align*}
\st(E, G(F)) & = E\cdot F^{-1} \cdot F \\
\st(G(E), G(F)) & = G(E \cdot F^{-1} \cdot F)
\end{align*}
\end{Lemma}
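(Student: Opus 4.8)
The plan is to prove both identities directly from the definition of the star operation, starting with the first (pointwise) formula and then bootstrapping to the family version using the left-invariance of the cover $G(F)$. Recall that $\st(E, G(F))$ is by definition the union of those translates $g\cdot F$ (with $g \in G$) which intersect $E$. So the whole computation reduces to identifying exactly which translates meet $E$ and then taking their union.

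First I would characterise the translates meeting $E$. A translate $g\cdot F$ intersects $E$ precisely when there exist $e \in E$ and $f \in F$ with $g\cdot f = e$, i.e.\ when $g = e\cdot f^{-1}$ for some $e \in E$, $f \in F$; equivalently, $g \in E\cdot F^{-1}$. For such a $g$ we have $g\cdot F = e\cdot f^{-1}\cdot F$, so taking the union over all admissible $g$ gives $\bigcup_{e \in E,\, f \in F} e\cdot f^{-1}\cdot F = E\cdot F^{-1}\cdot F$. I would then check both inclusions explicitly to be safe: any point of the star lies in some $g\cdot F$ with $g = e f^{-1}$, hence has the form $e f^{-1} f'$ with $f' \in F$ and so lies in $E\cdot F^{-1}\cdot F$; conversely, given $e f_1^{-1} f_2 \in E\cdot F^{-1}\cdot F$, the translate $g\cdot F$ with $g = e f_1^{-1}$ meets $E$ (since $g f_1 = e$) and contains $e f_1^{-1} f_2$. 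This yields the first identity $\st(E, G(F)) = E\cdot F^{-1}\cdot F$.

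For the second identity, I would exploit that $G(F)$ is invariant under left translation: for any $h \in G$, left multiplication by $h$ permutes the translates $g\cdot F$, so starring against $G(F)$ commutes with left translation, giving $\st(h\cdot E, G(F)) = h\cdot \st(E, G(F))$. Applying the first identity to the set $g\cdot E$ then yields $\st(g\cdot E, G(F)) = (g\cdot E)\cdot F^{-1}\cdot F = g\cdot(E\cdot F^{-1}\cdot F)$. Since $\st(G(E), G(F))$ is by definition the family $\{\st(g\cdot E, G(F)) \mid g \in G\}$, and as $g$ ranges over $G$ these sets are exactly the translates $g\cdot(E\cdot F^{-1}\cdot F)$, we obtain $\st(G(E), G(F)) = G(E\cdot F^{-1}\cdot F)$, as required.

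I do not expect any genuine obstacle here; the statement is essentially a bookkeeping computation in the group. The one point demanding care is keeping the handedness consistent: because the cover consists of \emph{left} translates $g\cdot F$ while the intersection condition forces $g = e\cdot f^{-1}$, the factor $F^{-1}$ appears sandwiched between $E$ and $F$, and one must resist the temptation to simplify $F^{-1}\cdot F$ (which is not in general a subgroup). Verifying both set inclusions in the first step, rather than relying on a formal manipulation, is the cleanest way to avoid a sign-of-handedness slip, and the family version then follows immediately from left-invariance.
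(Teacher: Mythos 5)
Your proposal is correct and follows essentially the same route as the paper: both prove the first identity by checking the two inclusions via the observation that $g\cdot F$ meets $E$ exactly when $g = e f^{-1}$ for some $e \in E$, $f \in F$, and both derive the family version by applying the first identity to each translate $g\cdot E$ together with the identity $g\cdot(E\cdot F^{-1}\cdot F) = (g\cdot E)\cdot F^{-1}\cdot F$. Your extra remark about starring commuting with left translation is a harmless restatement of that last identity, not a genuinely different argument.
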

\begin{proof}
If $x \in \st(E, G(F))$, then there is an $e \in E$ and $g \in G$ such that $e = gf_1$ and $x = gf_2$ for some $f_1,\ f_2 \in F$. Thus $g = ef_1^{-1}$ so $x \in E \cdot F^{-1} \cdot F $ as required. On the other hand, if $x = e f_1^{-1} f_2 \in E \cdot F^{-1} \cdot F $, then $e \in E$ and $\{e = e f_1^{-1} f_1,\ x = e f_1^{-1} f_2 \} \subseteq ef^{-1} \cdot F$, so $x \in \st(E, G(F))$ as required. Since $\st(G(E), G(F))$ is the collection of all sets $\st(g \cdot E, G(F))$, to prove the second equation it suffices to note that $g \cdot (E \cdot F^{-1} \cdot F) =   (g\cdot E) \cdot F \cdot F^{-1}$.
\end{proof}

More generally, if $G$ is a locally compact topological group, then $G$ admits an ls-structure given by all families of subsets which refine $G(K)$ for some compact set $K$. Since the product of two compact subsets in a topological group is again compact, Lemma \ref{starringingroup} shows that this is indeed an ls-structure. Moreover, $G$ together with this structure and the topology given form a hybrid large scale space (the uniformly bounded open cover is just $G(V)$, where $V$ is a precompact neighbourhood of the identity element). We now describe coarse neighbourhoods in the case of a locally compact topological group.

\begin{Lemma}\label{NecessaryAndSufficientForCoarseNbhg}
Suppose $V$ is a precompact, symmetric neighbourhood of the identity element in a locally compact topological group $G$. 
Then the following conditions are equivalent:
\begin{itemize}
\item[(1)] $N$ is a coarse neighbourhood of $U$,
\item[(2)] $U\cdot V\cdot F\cdot V\setminus N$ is precompact for each finite subset $F$ of $G$,
\item[(3)] $U\cdot V\cdot x\cdot V\setminus N$ is precompact for each point $x$ of $G$.
\end{itemize}
\end{Lemma}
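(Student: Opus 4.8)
The plan is to reduce the coarse-neighbourhood condition in (1) to a purely pre-compactness statement about right-translates of $U$, and then to sandwich that statement between (2) and (3). Throughout I take the containment $U\subseteq N$ as given (it is the definitional part of ``$N$ is a coarse neighbourhood of $U$'', and conditions (2) and (3) are to be read with this understood); only the scale condition requires real work. First I would record three facts about this ls-structure. Since every finite set is pre-compact, $G$ is coarsely connected, so it has a single coarse component, and hence \emph{weakly bounded} coincides with \emph{bounded}, which in turn coincides with \emph{pre-compact}. Secondly, every scale refines $G(K)$ for some compact $K$, and if $\mathcal{U}$ refines $G(K)$ then $\st(U,\mathcal{U})\subseteq \st(U,G(K))$; therefore it suffices to test the coarse-neighbourhood condition on the scales $G(K)$ alone. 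Thirdly, by Lemma \ref{starringingroup} we have $\st(U,G(K)) = U\cdot K^{-1}\cdot K$.

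Combining these, the coarse-neighbourhood condition in (1) (beyond the containment) is equivalent to: for every compact $K$, the set $U\cdot K^{-1}\cdot K\setminus N$ is pre-compact. Since the family $\{K^{-1}K\mid K\text{ compact}\}$ is cofinal among compact subsets of $G$ (any compact $L$ lies in $K^{-1}K$ for $K=L\cup\{e\}$, and each $K^{-1}K$ is itself compact), this is in turn equivalent to the condition $(1')$: for every compact set $L$, $U\cdot L\setminus N$ is pre-compact. This clean reformulation is the heart of the argument, and I expect assembling it — in particular the reduction to scales of the form $G(K)$ and the cofinality replacement of $K^{-1}K$ by an arbitrary compact $L$ — to be the main obstacle; everything afterwards is bookkeeping with pre-compact sets and the fact that a finite union of pre-compact sets is pre-compact.

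With $(1')$ in hand I would close the loop $(1)\Rightarrow(2)\Rightarrow(3)\Rightarrow(1)$. For $(1')\Rightarrow(2)$, given a finite set $F$, put $L=\overline{V}\cdot F\cdot \overline{V}$, which is compact because $V$ is pre-compact and $F$ is finite; then $U\cdot V\cdot F\cdot V\subseteq U\cdot L$, so $U\cdot V\cdot F\cdot V\setminus N$ is pre-compact. The implication $(2)\Rightarrow(3)$ is just the special case $F=\{x\}$ (and conversely $(3)\Rightarrow(2)$ follows by writing $U\cdot V\cdot F\cdot V=\bigcup_{x\in F}U\cdot V\cdot x\cdot V$ as a finite union), so (2) and (3) are trivially equivalent.

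Finally, for $(3)\Rightarrow(1')$ I would use local compactness to pass from points back to compact sets. Choose an open symmetric neighbourhood $V_0\subseteq V$ of the identity. Given a compact $L$, the open cover $\{V_0 x \mid x\in L\}$ (each member contains $x$ since $e\in V_0$) has a finite subcover $L\subseteq\bigcup_{i=1}^n V_0 x_i$. Since $e\in V$, we get $U\cdot L\subseteq \bigcup_i U\cdot V\cdot x_i\subseteq\bigcup_i U\cdot V\cdot x_i\cdot V$, whence $U\cdot L\setminus N\subseteq\bigcup_i(U\cdot V\cdot x_i\cdot V\setminus N)$ is a finite union of pre-compact sets and therefore pre-compact. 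This gives $(1')$, and together with $U\subseteq N$ yields (1), completing the cycle. The symmetry and pre-compactness of $V$ are used precisely to guarantee that the sets $\overline{V}\cdot F\cdot\overline{V}$ are compact and that a genuine open neighbourhood $V_0$ of the identity sits inside $V$.
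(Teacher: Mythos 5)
Your proof is correct and takes essentially the same route as the paper's: both reduce the coarse-neighbourhood condition to scales of the form $G(K)$ via Lemma \ref{starringingroup}, and both pass between compact sets and points by covering a compact set with finitely many translates of $V$ and using that finite unions of pre-compact sets are pre-compact. Your intermediate condition $(1')$ is just a cleaner packaging of what the paper does inline in its $(1)\Rightarrow(2)$ and $(3)\Rightarrow(1)$ steps.
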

\begin{proof}
(1)$\implies$(2). Suppose $N$ is a coarse neighbourhood of $U$ and $F$ is a finite subset of $G$. Enlarge $F$, if necessary, to contain the neutral element $1_G$ of $G$ and be symmetric.
 Consider the uniformly bounded family $\mathcal{U} = G(F \cdot V)$. If $N$ is a coarse neighbourhood of $U$, then there is a precompact set $C$ such that $st(U,\mathcal{U})\subset N\cup C$. By \ref{starringingroup} that implies 
 $U\cdot V\cdot F\cdot F\cdot V\setminus N\subset C$ is precompact.
 In particular, $U\cdot V\cdot F\cdot V\setminus N$ is precompact.
 
 (2)$\iff$(3) is obvious.
  
(2)$\implies$(1) Given a precompact $C\subset G$, find a symmetric finite subset $F$ of $G$ satisfying $C\subset F\cdot V$.
The uniformly bounded family $\mathcal{W}= G(F\cdot V)$ coarsens the cover $\mathcal{C}=G(C)$.
Since (using \ref{starringingroup}) $st(U,\mathcal{W})\setminus N$ is precompact,
so is $st(U,\mathcal{C})\setminus N$.
\end{proof}

\begin{Theorem}\label{groupsnormality}
Let $G$ be a locally compact abelian group. Then the following conditions are equivalent:
\begin{itemize}
\item[(1)] $G$ is hybrid large scale normal as an hls-space,
\item[(2)] $G$ is $\sigma$-compact,
\item[(3)] the ls-structure on $G$ is metrizable, that is, induced by a metric.
\end{itemize}
\end{Theorem}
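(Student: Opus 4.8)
The plan is to prove the cycle $(3)\Rightarrow(1)\Rightarrow(2)\Rightarrow(3)$, isolating $(2)\Leftrightarrow(3)$ as a purely coarse-geometric statement and reserving the real work for $(1)\Rightarrow(2)$. Throughout I would use Theorem \ref{BigNormalityIssue} to split hls-normality into topological normality plus ls-normality, and would first record that the topological half is automatic here: a locally compact Hausdorff group is a disjoint union of open $\sigma$-compact (hence Lindel\"of, hence paracompact) cosets of the subgroup generated by a compact neighbourhood of the identity, so it is paracompact and therefore normal. Consequently, for such groups hls-normality is equivalent to ls-normality, and I may work with the discrete topology and the coarse neighbourhood operator alone.

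For $(2)\Leftrightarrow(3)$ I would argue at the level of scales. Using the description of the ls-structure as the families refining some $G(K)$, $K$ compact, the structure is metrizable precisely when it is countably generated, i.e.\ when there is a countable family $K_1\subseteq K_2\subseteq\cdots$ of compact sets such that every compact set lies in some $K_n$; from such a sequence one builds a proper left-invariant metric whose bounded sets are exactly the ls-bounded sets, and conversely the balls of a metric give such a cofinal sequence. Since singletons are compact, a countable cofinal family of compact sets has union $G$, yielding $\sigma$-compactness, while a $\sigma$-compact group admits an exhaustion $K_n$ with $K_n\subseteq\operatorname{int}K_{n+1}$, which is automatically cofinal. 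For $(3)\Rightarrow(1)$, if the ls-structure is the metric ls-structure of some $d$, then the coarse-neighbourhood construction of Proposition \ref{C0metricnormality} (which is insensitive to the topology) shows $(G,d)$ is ls-normal; combined with the topological normality above and Theorem \ref{BigNormalityIssue}, this gives hls-normality.

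The hard direction is $(1)\Rightarrow(2)$, which I would prove contrapositively: a non-$\sigma$-compact LCA group is not ls-normal. First I would reduce to a combinatorial core. By the structure theorem $G\cong\mathbb{R}^n\times G_1$ with $G_1$ possessing a compact open subgroup $K$, and since quotienting by the bounded subgroup $K$ and replacing $\mathbb{R}^n$ by $\mathbb{Z}^n$ are coarse equivalences, while ls-normality is a coarse invariant (being equivalent, by Corollary \ref{UrysohnLemmaInHLS} with the discrete topology, to the separation of coarsely separated sets by slowly oscillating maps into $[0,1]$), one reduces to the case where $G$ is an \emph{uncountable discrete abelian group} $D$, the non-$\sigma$-compactness being concentrated in the uncountable discrete quotient $G_1/K$. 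The facts I would extract as a lemma about $D$ are: all finite sets are bounded, so $D$ is coarsely connected, yet $D$ is connected at no single scale, since a fixed scale $D(F)$ ($F$ finite) only joins points lying in a common coset of the countable subgroup $\langle F\rangle$, of which there are uncountably many, whereas every bounded ($=$ finite) set meets only finitely many such cosets.

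To manufacture the witness I would use the characterization of hls-normality via the Higson compactification proved earlier: it suffices to exhibit closed subsets $A,B$ of $D$ that are coarsely separated but whose closures in $hD$ meet. The mechanism I expect to exploit is that any slowly oscillating $f\colon D\to[0,1]$ with $f(A)\subseteq\{0\}$ and $f(B)\subseteq\{1\}$ would, by slow oscillation at a \emph{fixed} scale, force on all but finitely many of the uncountably many cosets a \emph{transition pair} of nearby points on which $f$ changes by a definite amount; all these pairs lie inside translates of one fixed finite set, so they form a uniformly bounded family with unbounded union, contradicting slow oscillation. The delicate point --- and the main obstacle --- is arranging $A$ and $B$ so that both requirements hold at once: naive ``parallel'' choices, where partners differ by a single fixed group element, are \emph{not} coarsely separated, since any scale that sees that element merges $\operatorname{st}(A,\cdot)$ with $B$; while ``coarsely discrete'' choices, where all pairwise differences are distinct, \emph{are} separable, since each offending pair can be absorbed into the finite exceptional set of its own scale. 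Threading between these, I would place $A$ and $B$ by a transfinite recursion along an exhaustion of $D$ by countable subgroups, invoking a pressing-down (Fodor-type) pigeonhole over the uncountable index to prevent a single slowly oscillating function from diagonalizing against all scales simultaneously, so that the pairs diverge at every fixed scale yet accumulate to a common point of the Higson corona. Verifying these two competing properties of the constructed pair is where essentially all the difficulty lies; the remaining implications are routine given the results already established.
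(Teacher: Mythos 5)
Your implications $(2)\Leftrightarrow(3)$ and $(3)\Rightarrow(1)$ are correct and essentially identical to the paper's treatment (countable generation gives metrizability, and metric ls-normality plus topological normality of locally compact groups plus Theorem \ref{BigNormalityIssue} gives hls-normality). The problem is $(1)\Rightarrow(2)$, which is the entire content of the theorem, and there your proposal stops short of a proof: you describe the two competing requirements on the witnessing pair $A,B$ (coarse separation versus non-separability by a slowly oscillating function), observe that the obvious candidate constructions each fail one of the two, and then defer the resolution to an unspecified ``transfinite recursion \dots invoking a pressing-down (Fodor-type) pigeonhole,'' explicitly conceding that verifying the two competing properties ``is where essentially all the difficulty lies.'' That is a statement of the difficulty, not a mechanism: nothing in the proposal says how the recursion is set up, what regressive function or stationary set is used, or why no single slowly oscillating function can separate the constructed sets. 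So the key direction has a genuine gap.

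For comparison, the paper's argument for $(1)\Rightarrow(2)$ needs neither the structure theorem, nor coarse invariance of ls-normality, nor set-theoretic machinery beyond an enumeration of length $\omega_1$: working directly in $G$, it fixes a countably infinite discrete set $B$ and a precompact symmetric neighbourhood $V$ of the identity, and (using non-$\sigma$-compactness, which makes every subgroup generated by $V$ and countably many elements proper) recursively chooses $A=\{a_t\}_{t<\omega_1}$ with each $a_t$ outside the subgroup generated by $B\cup V\cup\{a_r \mid r<t\}$. Independence forces $A\cdot V\cdot x\cdot V\cap B$ to contain at most one point, so $G\setminus B$ is a coarse neighbourhood of $A$ by Lemma \ref{NecessaryAndSufficientForCoarseNbhg}; the paper then refutes the existence of any \emph{intermediate} coarse neighbourhood $N$ by counting: each $Z(b)=\{a\in A \mid a\cdot b\notin N\}$ is finite, so countably many of them cannot exhaust the uncountable set $A$, giving some $a\in A$ with $a\cdot B\subseteq N$; commutativity then yields $B\subseteq N\cdot V\cdot a^{-1}\cdot V$, contradicting $N\prec G\setminus B$ via the same lemma. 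Note that the paper attacks the $\mathsf{(N4)}$-style intermediate-neighbourhood formulation of normality directly, rather than the Urysohn-function formulation you chose via Corollary \ref{UrysohnLemmaInHLS}; this is precisely what lets it avoid the ``diagonalizing against all scales simultaneously'' problem your sketch runs into. Finally, your reduction to an uncountable discrete group rests on two facts not established in the paper (coarse invariance of ls-normality, and transfer of non-normality from the factor $D$ to $\mathbb{Z}^n\times D$); both are provable, but they add proof obligations without closing the central gap.
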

\begin{proof}
(1) $\Rightarrow$ (2) Suppose $G$ is not $\sigma$-compact. By local compactness, we can pick a countably infinite discrete subset $B$ of $G$. Let $V$ be a precompact symmetric neighbourhood of the identity element. Notice that for any countable set $C$ in $G$, $G$ cannot be generated by $V \cup C$. Thus we can construct an uncountable set $A = \{a_t\}_{t < \omega_1}$ of elements of $G$ indexed by countable ordinals such that for any $t < \omega_1$, $a_t$ is not in the subgroup generated by $B \cup V \cup \{a_r \mid r < t \}$. Note that $B$ is discrete, as is $A$ (since any subset $g \cdot V$ intersects at most one element of $A$), so that every precompact subset of either $A$ or $B$ must be finite. We claim that $A$ has coarse neighbourhood $G \setminus B$. 
First note that $\mathsf{cl}(A) \subseteq \st(A, G(V))$ and $\mathsf{cl}(B) \subseteq \st(B, G(V))$ are disjoint, so that $G \setminus B$ contains an open set which contains the closure of $A$. 
We now show that $G \setminus B$ is a coarse neighbourhood of $A$. Let $x$ be an element of $G$, and consider the set $A\cdot V \cdot x \cdot V \cap B$. If $b_1 = a_1 v_1 x v_1' \in B$ and $b_2 = a_2 v_2 x v_2' \in B$ with $a_1, a_2 \in A$ and $v_1, v_1', v_2, v_2' \in V$, then $a_1$ (resp. $a_2$) is in the subgroup generated by $B \cup V$ and $a_2$ (resp. $a_1$) so we must have $a_1 = a_2$. Thus $A \cdot V \cdot x \cdot V \cap B$ contains only a single point, so by Lemma \ref{NecessaryAndSufficientForCoarseNbhg} we have that $X\setminus B$ is a coarse neighbourhood of $A$.
Suppose for contradiction there is an intermediate coarse neighbourhood $N$ between $A$ and $G \setminus B$. For $b \in B$, let $Z(b) = \{a \in A \mid a \cdot b \notin N\}$. Since $N$ is a coarse neighbourhood of $A$, each $Z(b) \cdot b$, and thus each $Z(b)$, must be precompact, hence finite. Thus the union of all the $Z(b)$ is countable, so there is an $a \in A$ such that $a \cdot b \in N$ for all $b \in B$. But then $a^{-1} \cdot N  \subseteq N \cdot V \cdot a^{-1} \cdot V$ intersects $B$ in an infinite (in particular, not precompact) set, which by Lemma \ref{NecessaryAndSufficientForCoarseNbhg} contradicts the fact that $G\setminus B$ is a coarse neighbourhood of $N$.

(2) $\Rightarrow$ (3) Suppose $G  = \cup_{i=1}^{\infty} K_i$, where the $K_i$ are compact subspaces. If $\mathcal{V}$ is an uniformly bounded open cover, then $G$ is the union of the countable set $\{\st(K_i, \mathcal{V}) \}_{i = 1}^{\infty}$ of precompact open sets. Every compact set is contained in a union of finitely many of the $\st(K_i, \mathcal{V})$. It follows that there is a countable set $C$ of precompact subsets such that each precompact subset is contained in an element of $C$, and that consequently, the ls-structure on $G$ is countably generated, i.e.~metrizable (see Theorem 2.55 of \cite{Roe}).

(3) $\Rightarrow$ (1) Since the ls-structure is metric, $G$ is ls-normal as an ls-space. It is well-known that locally compact Hausdorff groups are topologically normal, so by Theorem \ref{BigNormalityIssue} we have the result.

\end{proof}
 
 \begin{Corollary}
 Let $X$ be the set $\mathbb{R}$ be equipped with the ls-structure coming from the group structure and the discrete topology. Then $X$ is not hls-normal.
 \end{Corollary}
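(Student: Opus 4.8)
The plan is to deduce this directly from Theorem~\ref{groupsnormality}, once I verify that the hls-space $X$ in the statement genuinely fits the hypotheses of that theorem. First I would observe that $\mathbb{R}$ equipped with the discrete topology is a locally compact abelian topological group: every discrete space is locally compact Hausdorff, and the group operations on $\mathbb{R}$ are automatically continuous with respect to the discrete topology. So $X$ is a locally compact abelian group in the sense required by Theorem~\ref{groupsnormality}.

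The key point to check is that the two ls-structures in play actually coincide. In a space with the discrete topology, a subset is compact if and only if it is finite. Hence the ls-structure generated by the covers $G(K)$ for $K$ compact -- which is the structure Theorem~\ref{groupsnormality} refers to for a locally compact group -- agrees with the ls-structure generated by the covers $G(F)$ for $F$ finite, namely the group ls-structure on $X$ described in the statement of the corollary. With this identification in place, Theorem~\ref{groupsnormality} applies verbatim to $X$, so its conditions (1), (2), and (3) are equivalent.

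It then remains only to check that condition (2) fails, i.e.\ that $\mathbb{R}$ with the discrete topology is not $\sigma$-compact. Since compact subsets of a discrete space are finite, a $\sigma$-compact discrete space is a countable union of finite sets and is therefore countable; as $\mathbb{R}$ is uncountable, it cannot be $\sigma$-compact. By the equivalence (1)$\Leftrightarrow$(2) of Theorem~\ref{groupsnormality}, $X$ is therefore not hls-normal. I do not anticipate any serious obstacle here: the whole content of the argument is the routine verification that $X$ is a locally compact abelian group whose group ls-structure coincides with the topological-group ls-structure, after which the non-$\sigma$-compactness of an uncountable discrete space yields the conclusion immediately. The only point demanding a moment's care is confirming that ``finite'' and ``compact'' bounded sets coincide, so that the cited theorem is genuinely applicable.
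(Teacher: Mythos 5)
Your proposal is correct and matches the paper's intended argument: the corollary is stated without a separate proof precisely because it is the direct application of Theorem~\ref{groupsnormality} that you carry out, namely that $\mathbb{R}$ with the discrete topology is a locally compact abelian group whose compact sets are finite (so the topological-group ls-structure coincides with the group ls-structure), and an uncountable discrete space is not $\sigma$-compact, so (1)$\Leftrightarrow$(2) gives the failure of hls-normality. Your care in checking that the two ls-structures coincide is exactly the right point to verify, and the verification is sound.
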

 
 \begin{Question}
 The proof of Proposition \ref{groupsnormality} holds more generally for any group where the ``left-translation structure'' (that is, the ls-structure used here) and the ``right-translation structure'' (that is, the ls-structure generated by families of the form $\{  K \cdot g \mid g \in G\}$ for $K$ compact) coincide: only the last sentence of (1) $\Rightarrow$ (2) needs to be changed. Does Proposition \ref{groupsnormality} hold for all groups?
 \end{Question}

\section{Coarse neighbourhoods and ls-structures}
Since the coarse neighbourhood operator completely determines which maps to $[0,1]$ from an ls-space are slowly oscillating, one might ask to what extend coarse neighbourhoods determine the ls-structure on a set. 

\begin{Definition}
Given any ls-space $X$ with ls-structure $\mathcal{X}$, we define $\mathcal{X}^{cn}$ to be the collection of all families of subsets $\mathcal{U}$ such that for every coarse neighbourhood $N$ of $A$ in $X$, $\st( A, \mathcal{U}) \subseteq N \cup K$ for some bounded set $K$ (that is, bounded in the sense of the original structure $\mathcal{X}$).
\end{Definition}

Clearly we always have $\mathcal{X} \subseteq \mathcal{X}^{cn}$, and if $\mathcal{X}_1 \subseteq \mathcal{X}_2$ are ls-structures on $X$ which induce the same bounded sets, then $\mathcal{X}_1^{cn} \subseteq \mathcal{X}_2^{cn}$. The collection $\mathcal{X}^{cn}$ need not coincide with $\mathcal{X}$ in general, as the following example shows.

\begin{Example}\label{nonex1}
Let $X$ be an infinite set and let $\mathcal{X}$ be the maximal uniformly locally finite ls-structure. Then $N$ is a coarse neighbourhood of $A \subseteq N$ if and only if either $A$ is finite or $N$ is cofinite. Consider a cover of $X$ by infinitely many disjoint finite subsets of unbounded cardinality. One checks that this cover is in $\mathcal{X}^{cn}$, but it is clearly not in $\mathcal{X}$.
\end{Example}

For metric spaces, however, $\mathcal{X}^{cn}$ turns out to be equal to the original (metric) ls-structure.

\begin{Proposition}
Let $X$ be a metric space and $\mathcal{X}$ the associated ls-structure. Then $\mathcal{X} = \mathcal{X}^{cn}$. 
\end{Proposition}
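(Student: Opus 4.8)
The plan is to prove the nontrivial inclusion $\mathcal{X}^{cn} \subseteq \mathcal{X}$ by contraposition. For a metric space, membership of a family $\mathcal{U}$ in $\mathcal{X}$ means exactly that the diameters of its members are uniformly bounded (refining $\{B(x,R)\}$ for some $R$ forces each member into an $R$-ball, hence diameter at most $2R$, and conversely). So if $\mathcal{U} \in \mathcal{X}^{cn}$ but $\mathcal{U} \notin \mathcal{X}$, I can pick members $U_n \in \mathcal{U}$ and points $a_n, b_n \in U_n$ with $d(a_n, b_n) \geq n$. I would then split the endpoints into two sets $A$ and $B$ that are disjoint, coarsely separated, and with $B$ unbounded. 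Granting that, the remark following the definition of coarse separation shows $N := X \setminus B$ is a coarse neighbourhood of $A$; but $b_n \in U_n$ and $a_n \in U_n \cap A$, so $B \subseteq \st(A, \mathcal{U})$, and since $B$ is unbounded and disjoint from $N$, $\st(A,\mathcal{U})$ cannot sit inside $N \cup K$ for any bounded $K$. This contradicts $\mathcal{U} \in \mathcal{X}^{cn}$, forcing $\mathcal{U} \in \mathcal{X}$.

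To carry out the split, fix a basepoint $x_0$ and set $\rho_n = \min(d(x_0, a_n), d(x_0, b_n))$ and $\sigma_n = \max(d(x_0, a_n), d(x_0, b_n))$; the triangle inequality gives $\sigma_n \geq n/2 \to \infty$. If $\liminf_n \rho_n < \infty$, I would pass to a subsequence along which the nearer endpoints lie in a fixed ball $B(x_0, M)$, let $A$ be the set of these nearer endpoints (bounded) and $B$ the set of the farther endpoints (unbounded since $\sigma_n \to \infty$). Here coarse separation is automatic, because a bounded set is coarsely separated from every set: recall that a metric space is coarsely connected, so weakly bounded and bounded coincide. Disjointness holds after discarding finitely many small indices.

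If instead $\rho_n \to \infty$, then even the nearer endpoint of each pair escapes to infinity, so for every $C$ only finitely many pairs meet $B(x_0, C)$. I would extract pairs greedily: having chosen pairs whose endpoints all lie in $B(x_0, R_{j-1})$, select the next pair with $\rho > R_{j-1} + j$, so that by the triangle inequality every endpoint of the new pair is at distance $> j$ from every previously chosen endpoint. Assigning one endpoint of each selected pair to $A$ and the other to $B$ makes both sets unbounded, with every cross-pair distance exceeding the larger selection index. Coarse separation then follows: a point within $R$ of both $A$ and $B$ forces two endpoints within $2R$ of each other, which can happen only among the finitely many pairs of index below $2R$, so each star-intersection $\st(A,\mathcal{V}) \cap \st(B,\mathcal{V})$ is bounded.

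I expect the greedy extraction in the second case to be the step requiring the most care, since that is where the triangle inequality is used to upgrade ``escaping to infinity'' into genuine pairwise (hence coarse) separation. The first case and the closing contradiction are routine once the equivalence between disjoint coarse separation and the complement being a coarse neighbourhood is invoked.
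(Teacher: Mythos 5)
Your proof is correct, and although it begins exactly where the paper's proof begins --- from the failure of uniform boundedness extract members $U_n \in \mathcal{U}$ and pairs $a_n, b_n \in U_n$ with $d(a_n,b_n) \ge n$ --- the way you convert these pairs into a violated coarse neighbourhood is genuinely different. The paper takes $A = \{a_n\}$ (with $b_n$ chosen as the endpoint farther from the basepoint, hence unbounded) and writes down a single explicit coarse neighbourhood $N = \bigcup_{n} B(a_n, n)$, then asserts that the $b_n$ lie in $\st(A,\mathcal{U}) \setminus N$, which is unbounded. You instead manufacture two disjoint, coarsely separated sets $A$ and $B$ with $B$ unbounded and each relevant pair joining them inside a member of $\mathcal{U}$, and then invoke the paper's remark that, for disjoint sets, coarse separation is equivalent to $X \setminus B$ being a coarse neighbourhood of $A$; this costs you the two-case analysis ($\liminf_n \rho_n < \infty$ versus $\rho_n \to \infty$) and the greedy sparsification. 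What the paper's route buys is brevity: no cases, no subsequence extraction, one explicit neighbourhood. What your route buys is rigor at precisely the point the paper glosses over: in the paper's construction nothing prevents some $b_m$ from landing in a ball $B(a_n,n)$ with $n \neq m$ (for instance if the pairs form a chain with $b_n = a_{n+1}$, then \emph{every} $b_n$ lies in $N$), so a fully careful version of the paper's argument also needs the pairs to be thinned out so that distinct pairs are far apart --- which is exactly what your greedy extraction supplies. One small detail to pin down in your Case 2: an endpoint of $A$ and an endpoint of $B$ coming from the \emph{same} selected pair must also be far apart, so you should select pairs with strictly increasing original indices, making the within-pair distance at least the selection index; your phrase ``pairs of index below $2R$'' implicitly relies on this, and with that adjustment the boundedness of $\st(A,\mathcal{V}) \cap \st(B,\mathcal{V})$ goes through as you state.
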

\begin{proof}
We already have $\mathcal{X} \subseteq \mathcal{X}^{cn}$, so suppose for contradiction that there is a family $\mathcal{U}$ in $\mathcal{X}^{cn}$ which is not in $\mathcal{X}$. Since $\mathcal{U}$ is not uniformly bounded, we may choose a sequence $U_n$ of elements of $\mathcal{U}$ and pairs $\{a_n, b_n\} \subseteq U_n$ of points in $X$ such that the $b_n$ are unbounded, and for each $n$, $d(a_n, b_n) > n $. Consider the subset $N = \bigcup_{n=0}^{\infty} B(a_n, n)$. Clearly it is a coarse neighbourhood of $A = \{a_n \mid n \in \mathbb{N} \}$, but $\st(A, \mathcal{U}) \cap X \setminus N$ contains the $b_n$s, so it is unbounded. 
\end{proof}

More generally, one may ask when $\mathcal{X}^{cn}$ is an ls-structure. As shown below, for ls-normal ls-spaces $\mathcal{X}^{cn}$ turns out to be an ls-structure. Note that the maximal uniformly locally finite structure is ls-normal, so even if $\mathcal{X}^{cn}$ is an ls-structure, it need not coincide with the original structure $\mathcal{X}$ as Example \ref{nonex1} above shows.

\begin{Proposition}
Let $X$ be an ls-space which is ls-normal. Then $\mathcal{X}^{cn}$ is an ls-structure.
\end{Proposition}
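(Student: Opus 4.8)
The plan is to verify directly that $\mathcal{X}^{cn}$ satisfies the two axioms in the definition of a large scale structure. Nonemptiness is immediate from $\mathcal{X} \subseteq \mathcal{X}^{cn}$ together with the fact that $\mathcal{X}$ is nonempty. Axiom (1) is routine: if $\mathcal{U} \in \mathcal{X}^{cn}$ and every element of $\mathcal{W}$ consisting of more than one point lies in some element of $\mathcal{U}$, then for any coarse neighbourhood $N$ of a set $A$ one checks that $\st(A, \mathcal{W}) \subseteq \st(A, \mathcal{U}) \cup A$, since a member of $\mathcal{W}$ meeting $A$ is either a singleton contained in $A$ or lies in some element of $\mathcal{U}$ that itself meets $A$. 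As $A \subseteq N$ and $\st(A,\mathcal{U}) \subseteq N \cup K$ for the bounded set $K$ supplied by $\mathcal{U} \in \mathcal{X}^{cn}$, we get $\st(A,\mathcal{W}) \subseteq N \cup K$, so $\mathcal{W} \in \mathcal{X}^{cn}$. The real content is therefore axiom (2): given $\mathcal{U}, \mathcal{V} \in \mathcal{X}^{cn}$, show $\st(\mathcal{U}, \mathcal{V}) \in \mathcal{X}^{cn}$.

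First I would record a preliminary observation that does not use normality: \emph{for $\mathcal{U} \in \mathcal{X}^{cn}$ and any bounded set $K$, the set $\st(K, \mathcal{U})$ is bounded.} Indeed, every bounded set is a coarse neighbourhood of itself, because for a uniformly bounded cover $\mathcal{W}$ the star $\st(K, \mathcal{W})$ is bounded (by axiom (2) for $\mathcal{X}$) and hence weakly bounded; so applying the defining property of $\mathcal{X}^{cn}$ with $A = N = K$ yields $\st(K, \mathcal{U}) \subseteq K \cup K'$ with $K'$ bounded. This is the tool that keeps error terms under control, and it is needed precisely because members of $\mathcal{X}^{cn}$ need not be uniformly bounded.

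Next I would reduce everything to a single iterated-star estimate. For any set $A$ one has the inclusion $\st(A, \st(\mathcal{U}, \mathcal{V})) \subseteq \st(\st(\st(A, \mathcal{V}), \mathcal{U}), \mathcal{V})$, proved by a direct chase: if $U \in \mathcal{U}$ has $\st(U,\mathcal{V})$ meeting $A$, then some $V \in \mathcal{V}$ meets both $U$ and $A$, whence $U \subseteq \st(\st(A,\mathcal{V}),\mathcal{U})$, and one further $\mathcal{V}$-star absorbs the remaining points of $\st(U,\mathcal{V})$. Thus, given a coarse neighbourhood $N$ of $A$, it suffices to bound the triple star $\st(\st(\st(A, \mathcal{V}), \mathcal{U}), \mathcal{V})$ by $N$ together with a bounded set. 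Here is where ls-normality enters: since $X$ is ls-normal, the coarse neighbourhood operator satisfies $\mathsf{(N4)}$ (Lemma \ref{neighN4hyb}, applied with the discrete topology, under which the hybrid operator coincides with the coarse one). Applying $\mathsf{(N4)}$ twice to $A \prec N$ produces a nested chain $A \prec S_1 \prec S_2 \prec N$ of coarse neighbourhoods.

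Finally I would peel off the three stars one level at a time along this chain. Using $\mathcal{V} \in \mathcal{X}^{cn}$ with $S_1$ a coarse neighbourhood of $A$ gives $\st(A, \mathcal{V}) \subseteq S_1 \cup K_1$ with $K_1$ bounded; applying $\mathcal{U} \in \mathcal{X}^{cn}$ with $S_2$ a coarse neighbourhood of $S_1$, and using the preliminary observation to bound $\st(K_1, \mathcal{U})$, gives $\st(\st(A,\mathcal{V}),\mathcal{U}) \subseteq S_2 \cup \tilde{K}_2$ for some bounded $\tilde{K}_2$ (here one uses that the star distributes over the union $S_1 \cup K_1$); and a third application with $N$ a coarse neighbourhood of $S_2$ places the triple star inside $N \cup K$ with $K$ bounded. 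Combined with the inclusion above, this yields $\st(A, \st(\mathcal{U}, \mathcal{V})) \subseteq N \cup K$, as required. The main obstacle is exactly the bookkeeping of these error terms across the three star operations: without the preliminary boundedness lemma the stars of the bounded remainders $K_1, \tilde{K}_2$ could escape, and without the nested chain produced by $\mathsf{(N4)}$ there would be no room to fit each successive star inside the next coarse neighbourhood. Everything else is formal manipulation of stars.
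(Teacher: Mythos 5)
Your proposal is correct and follows essentially the same route as the paper: both apply ls-normality ($\mathsf{(N4)}$) twice to obtain a chain $A \prec S_1 \prec S_2 \prec N$ of intermediate coarse neighbourhoods, use the iterated-star inclusion $\st(A, \st(\mathcal{U},\mathcal{V})) \subseteq \st(\st(\st(A,\mathcal{V}),\mathcal{U}),\mathcal{V})$, and absorb a bounded error term at each of the three star operations. Your preliminary observation that $\st(K,\mathcal{U})$ is bounded for bounded $K$ and $\mathcal{U} \in \mathcal{X}^{cn}$ just makes explicit the bookkeeping the paper leaves implicit when it asserts that the intermediate neighbourhood $M$ remains a coarse neighbourhood of $\st(A,\mathcal{V})$ despite the bounded perturbation.
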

\begin{proof}
Let $\mathcal{U}$ and $\mathcal{V}$ be elements of $\mathcal{X}^{cn}$ and let $A$ have coarse neighbourhood $N$. By normality, there are intermediate coarse neighbourhoods $A \prec L \prec M \prec N$. Then $A_1 = \st(A, \mathcal{V})$ is contained in $L \cup K$ for some bounded subset $K$. In particular, $M$ is a coarse neighbourhood of $\st(A, \mathcal{V})$. Similarly, $A_2 = \st(A_1, \mathcal{U})$ has coarse neighbourhood $N$. Finally, $\st(A_2, \mathcal{V}) = \st(A, \st(\mathcal{U}, \mathcal{V}))$ is contained in $N \cup K'$ for some bounded set $K'$, which gives the result.
\end{proof}

\begin{Proposition}
Let $X$ and $Y$ be metric spaces and let $f: X \rightarrow Y$ be a map which sends bounded sets in $X$ to bounded sets in $Y$ and which is proper (that is, the inverse image of a bounded set is bounded). Then $f$ is ls-continuous if and only if for every subset $A$ of $Y$ and every coarse neighbourhood $N$ of $A$, $f^{-1}(N)$ is a coarse neighbourhood of $f^{-1}(A)$.
\end{Proposition}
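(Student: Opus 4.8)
The plan is to prove both implications directly from the definitions, using properness together with the fact that, since metric spaces are coarsely connected, ``weakly bounded'' coincides with ``bounded'' in both $X$ and $Y$.

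For the forward implication, assume $f$ is ls-continuous. Given $A\subseteq Y$ with coarse neighbourhood $N$, I would first record the pointwise containment
\[
f\big(\st(f^{-1}(A),\mathcal U)\big)\subseteq \st(A,f(\mathcal U))
\]
for any uniformly bounded cover $\mathcal U$ of $X$: if $x\in U$ and $U$ meets $f^{-1}(A)$, then $f(U)\in f(\mathcal U)$ meets $A$ and contains $f(x)$. Since $f$ is ls-continuous, $f(\mathcal U)$ is uniformly bounded, so after padding with singletons to a cover, the coarse-neighbourhood property of $N$ supplies a bounded $K_Y\subseteq Y$ with $\st(A,f(\mathcal U))\subseteq N\cup K_Y$. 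Pulling back and invoking properness to make $f^{-1}(K_Y)$ bounded gives $\st(f^{-1}(A),\mathcal U)\subseteq f^{-1}(N)\cup f^{-1}(K_Y)$, which is exactly the required conclusion.

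For the converse I would argue by contrapositive. If $f$ is not ls-continuous there is a uniformly bounded cover $\mathcal U$ with $f(\mathcal U)$ not uniformly bounded, so I may pick $U_n\in\mathcal U$ and points $a_n,b_n\in U_n$ with $d_Y(f(a_n),f(b_n))>n$; write $\alpha_n=f(a_n)$, $\beta_n=f(b_n)$. The key preliminary step is to show, using both standing hypotheses, that \emph{both} image sequences diverge to infinity together along a common subsequence: as $a_n,b_n$ lie in the common small set $U_n$ they stay a bounded $X$-distance apart, so boundedness of $\{a_n\}$ forces boundedness of $\{b_n\}$ and hence (as $f$ preserves boundedness) boundedness of $\{\alpha_n\},\{\beta_n\}$, contradicting $d_Y(\alpha_n,\beta_n)\to\infty$; properness then upgrades $d_X(a_{n_k},x_0)\to\infty$ to $d_Y(\alpha_{n_k},y_0)\to\infty$ and $d_Y(\beta_{n_k},y_0)\to\infty$ simultaneously. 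With this in hand I would build, by a greedy induction, a subsequence and radii $r_j=j$ so that each centre $\alpha_{k_j}$ is chosen far enough out that its ball $B(\alpha_{k_j},r_j)$ misses a ball $B(y_0,R_{j-1})$ containing everything selected so far. Setting $A=\{\alpha_{k_j}\}$ and $N=\bigcup_j B(\alpha_{k_j},r_j)$, the condition $r_j\to\infty$ makes $N$ a coarse neighbourhood of $A$, while the escaping of the centres guarantees $\beta_{k_j}\notin N$ for every $j$, past and future balls alike.

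Finally I would apply the hypothesis to this $A$ and $N$: each $b_{k_j}$ lies in $\st(f^{-1}(A),\mathcal U)$, since it shares the element $U_{k_j}\in\mathcal U$ with $a_{k_j}\in f^{-1}(A)$, yet $b_{k_j}\notin f^{-1}(N)$ because $f(b_{k_j})=\beta_{k_j}\notin N$; and the $b_{k_j}$ form an unbounded set because their images are unbounded and $f$ preserves boundedness. Hence $\st(f^{-1}(A),\mathcal U)\setminus f^{-1}(N)$ is unbounded, so $f^{-1}(N)$ is not a coarse neighbourhood of $f^{-1}(A)$, the desired contradiction. I expect the main obstacle to be precisely this separation inside the converse: because a coarse neighbourhood of $A$ is forced to contain balls of radius tending to infinity, a careless choice of $N$ would reabsorb the escaping points $\beta_{k_j}$ into later large balls, and the way around this is exactly the observation that properness makes $\alpha_n$ and $\beta_n$ diverge together, so the centres can be spaced out faster than the radii grow.
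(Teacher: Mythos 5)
Your proof is correct, and its overall architecture is the same as the paper's: the forward direction via the containment $f(\st(f^{-1}(A),\mathcal{U})) \subseteq \st(A, f(\mathcal{U})) \subseteq N \cup K$ together with properness, and the converse by contraposition, taking $A$ to be the set of images of the centres of bad pairs and $N$ a union of balls around these images with radii tending to infinity. Where you genuinely add something is the separation step in the converse, and this addition is not cosmetic: the paper's proof takes the bad pairs $(a_n,b_n)$ as they come, sets $N = \bigcup_n B(f(a_n), n)$ (this is how its formula $\bigcup_n B(a_n,n)$ must be read for $f^{-1}(N)$ to make sense), and simply asserts that the $b_n$ lie outside $f^{-1}(N)$; but the inequality $d(f(a_n), f(b_n)) > n$ only keeps $f(b_n)$ out of the $n$-th ball, not out of the other balls. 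This can genuinely fail: with $X = \{(n,i) \mid n \geq 1,\ i \in \{0,1\}\} \subseteq \mathbb{R}^2$, $Y = \mathbb{R}$, $f(n,0) = n^2$ and $f(n,1) = (n+1)^2$, the map $f$ is proper, sends bounded sets to bounded sets, and is not ls-continuous, yet with the natural bad pairs $a_n = (n,0)$, $b_n = (n,1)$ the paper's choice of $N$ satisfies $f^{-1}(N) = X$, which is a coarse neighbourhood of everything and so witnesses nothing. Your two extra ingredients --- using properness to show that $f(a_n)$ and $f(b_n)$ escape to infinity together along a subsequence, and then greedily spacing the selected centres so that each new ball $B(f(a_{k_j}), j)$ misses a large ball containing all previously chosen balls and points, while each new $f(b_{k_j})$ is chosen outside that same large ball --- are exactly what is needed to guarantee $f(b_{k_j}) \notin N$ for all $j$; the rest of your argument (the $b_{k_j}$ form an unbounded set since their images do, and they lie in $\st(f^{-1}(A), \mathcal{U})$) is sound. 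So your proposal follows the paper's route but repairs a real gap in the published proof of the converse.
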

\begin{proof}
$(\Rightarrow)$ Suppose $N$ is a coarse neighbourhood of $A \subseteq Y$. Then for every uniformly bounded cover $\mathcal{U}$ of $X$, the image of $\st(f^{-1}(A), \mathcal{U})$ is contained in $\st(A, f(\mathcal{U}))$, which in turn is contained in $N \cup K$ for some bounded set $K$. It follows, since $f$ is coarse, that $f^{-1}(N)$ is a coarse neighbourhood of $f^{-1}(A)$.

$(\Leftarrow)$ Suppose that $f$ is not ls-continuous. Then there is some $R > 0$ such that the set
$$
\{d(f(x),f(x')) \mid (x, x') \in X\times X,\ d(x, x') < R \}
$$
is unbounded. In particular, we may choose a sequence of pairs of points $(a_n, b_n)_{n \in \mathbb{N}}$ in $X$ such that $d(a_n, b_n) < R$ and $d(f(a_n), f(b_n)) > n$ for every $n$. Because $f$ sends bounded sets to bounded sets, the $a_n$ and $b_n$ cannot all be contained in a single bounded set $K$, since otherwise all the $f(a_n)$ and $f(b_n)$ would be contained in the bounded set $f(K)$. Since each $a_n$ is distance at most $R$ from $b_n$, we can moreover say that neither of the sequences $(a_n)_{n \in \mathbb{N}}$ and $(b_n)_{n \in \mathbb{N}}$ are bounded. Thus neither of the sequences $(f(a_n))_{n \in \mathbb{N}}$ and $(f(b_n))_{n \in \mathbb{N}}$ are bounded, because $f$ is a proper map. 

We now choose a subsequence of $(a_n, b_n)$ for which the images of the points are sufficiently spread out. To do so, we pick a base point $y_0 \in Y$ and set $\phi(0) = 0$. The induction proceeds as follows: for $i \in \mathbb{N}$, let 
$$p(i) = \max \left\lbrace \{ d(y_0, f(a_{\phi(k)})) \mid k \leq i \} \cup \{d(y_0, f(b_{\phi(k)})) \mid k \leq i \}  \right\rbrace+ i + 1$$
and choose $\phi(i+1) \geq i+1$ so that $f(a_{\phi(i+1)})$ and $f(b_{\phi(i+1)})$ are both outside the bounded set $B(y_0, p(i))$. Thus we obtain a subsequence $(a_{\phi(n)}, b_{\phi(n)})_{n \in \mathbb{N}}$ with the property that for every $k, i \in \mathbb{N}$, $f(b_{\phi(k)})$ is distance at least $i-1$ from $f(a_{\phi(i)})$.

Clearly $N = \bigcup_{i=0}^\infty B(f(a_{\phi(n)} ), n-1)$ is a coarse neighbourhood of $A = \{f(a_\phi(n)) \mid n \in \mathbb{N} \}$. We claim, however, that $f^{-1}(N)$ is not a coarse neighbourhood of $f^{-1}(A)$. Indeed, if $\mathcal{U}$ is the uniformly bounded cover of $X$ by balls of radius $R$, then $\st(f^{-1}(A), \mathcal{U})$ contains all the $b_{\phi(n)}$. On the other hand, $X \setminus f^{-1}(N)$ also contains each $b_{\phi(n)}$ by construction. Since the set $\{b_{\phi(n)} \mid n \in \mathbb{N}\}$ is unbounded, we have that $\st(f^{-1}(A), \mathcal{U}) \cap (X \setminus f^{-1}(N)))$ is unbounded, so $f^{-1}(N)$ is not a coarse neighbourhood of $f^{-1}(A)$.
\end{proof}

\end{document}